\documentclass[11pt]{amsart}
\usepackage[utf8]{inputenc}
\usepackage{amsmath}
\usepackage{amssymb}
\usepackage{graphicx, overpic}
\usepackage{caption}
\usepackage{subcaption}
\usepackage{pinlabel}
\usepackage{tikz}
\usepackage{eucal}
\usepackage[all]{xy}

\SelectTips{cm}{12}

\usepackage{color}

\usepackage[margin=1.35in]{geometry}

\numberwithin{equation}{section}
\newtheorem{thm}[equation]{Theorem}
\newtheorem{prop}[equation]{Proposition}
\newtheorem{lem}[equation]{Lemma}
\newtheorem{cor}[equation]{Corollary}

\theoremstyle{definition}
\newtheorem{defn}[equation]{Definition}
\newtheorem{rem}[equation]{Remark}

\newtheorem{exmp}[equation]{Example}
\newtheorem{ques}[equation]{Question}
\newtheorem{prob}[equation]{Problem}
\newtheorem{cons}[equation]{Construction}
\newtheorem{notation}[equation]{Notation}
\newtheorem{assumption}[equation]{Standing Assumptions}

\newcommand{\field}[1]{\mathbb{#1}}
\newcommand{\Z}{\field{Z}}

\newcommand{\R}{\field{R}}

\newcommand{\E}{\field{E}}
\newcommand{\Hyp}{\field{H}}

\newcommand{\cE}{\mathcal{E}}

\newcommand{\cG}{\mathcal{G}}

\newcommand{\la}{\left\langle}
\newcommand{\ra}{\right\rangle}
\newcommand{\p}{\partial}

\newcommand{\of}{\circ}
\newcommand{\boundary}{\partial}

\newcommand{\bigset}[2]{ \bigl\{ \, {#1} \bigm| {#2} \, \bigr\} }

\renewcommand{\emptyset}{\varnothing}


\DeclareMathOperator{\Isom}{Isom}
\DeclareMathOperator{\CAT}{CAT}

\DeclareMathOperator{\Stab}{Stab}

\DeclareMathOperator{\Circ}{Circ}

\definecolor{amethyst}{rgb}{0.6, 0.4, 0.8}

\newcommand{\hide}[1]{}

\begin{document}

\title{Coarse Alexander duality for pairs and applications}
\author{G. Christopher Hruska}
\address{Department of Mathematical Sciences\\
University of Wisconsin--Milwaukee\\
PO Box 413\\
Milwaukee, WI 53211\\
USA}
\email{chruska@uwm.edu}

\author{Emily Stark}
\address{Department of Mathematics and Computer Science, Wesleyan University, 265 Church Street, Middletown, CT 06457, USA}
\email{estark@wesleyan.edu}

\author{Hung Cong Tran}
\address{FCCI Insurance Group, 6300 University Parkway, Sarasota, FL 34240, USA}
\email{htran@fcci-group.com}

\keywords{Alexander duality, coarse $PD(n)$ space, Kleinian group}

\subjclass[2020]{
55N05, 
55M05, 
20F65 
}

\date{\today}

\begin{abstract}
For a group $G$ (of type $F$) acting properly on a coarse Poincar\'{e} duality space $X$, Kapovich--Kleiner introduced a coarse version of Alexander duality between~$G$ and its complement in $X$.
More precisely, the cohomology of $G$ with group ring coefficients is dual to a certain \v{C}ech homology group of the family of increasing neighborhoods of a $G$--orbit in $X$.
This duality applies more generally to coarse embeddings of certain contractible simplicial complexes into coarse $PD(n)$ spaces.
In this paper we introduce a relative version of this \v{C}ech homology that satisfies the Eilenberg--Steenrod Exactness Axiom, and we prove a relative version of coarse Alexander duality.

As an application we provide a detailed proof of the following result, first stated by Kapovich--Kleiner.  
Given a $2$--complex formed by gluing~$k$ halfplanes along their boundary lines and a coarse embedding into a contractible $3$--manifold, the complement consists of~$k$ deep components that are arranged cyclically in a pattern called a \emph{Jordan cycle}.
We use the Jordan cycle as an invariant in proving the existence of a $3$--manifold group that is virtually Kleinian but not itself Kleinian.
\end{abstract}

\maketitle

\section{Introduction}

The coarse Alexander duality theorem of Kapovich--Kleiner~\cite{kapovichkleiner05} is a powerful homological tool for the study of coarse embeddings via the asymptotic shapes of their complements, generalizing earlier work of \cite{FarbSchwartz96,BlockWeinberger97}.  In certain circumstances, a coarse embedding of simplicial complexes $f\colon K \to X$ induces a duality between the coarse cohomology of $K$ and a certain \v{C}ech homology group associated to the ``end'' of the complement $X\setminus f(K)$, which, roughly speaking, measures the homology of cycles and boundaries in $X$ that are ``far from~$f(K)$.''

Coarse cohomology was introduced by Roe, who shows in \cite[Chap.~5]{Roe_CoarseGeometry} that for a uniformly acyclic simplicial complex $K$, the coarse cohomology of $K$ with integral coefficients coincides with the simplicial cohomology with compact supports $H_c^*(K)$.  Furthermore, in the presence of a proper, cocompact action of a group $G$ on $K$, it also coincides with the group cohomology $H^*(G;\Z G)$ of $G$ with coefficients in the group ring $\Z G$. (See also \cite[\S VIII.7]{Brown_Cohomology}.)

The \emph{end} of $X$ is the inverse sequence of spaces $\epsilon X = \{X \setminus N_i\}$, where $N_i$ is the closed simplicial ball in $X$ of radius $i$ centered at a fixed basepoint in $X$ and the bonding maps are inclusions.
The \v{C}ech homology of the end of $X$ is the inverse limit of the integral homology groups $H_*(\overline{X \setminus N_i})$.
In a sense, \v{C}ech homology of the end detects the shape of a space near infinity (see \cite{Geoghegan86_ShapeOfAGroup,guilbault,GeogheganSwenson_Semistable}).
The increasing family of balls may be viewed as increasing neighborhoods of a point in $X$.
If, instead of balls, one chooses $\{N_i\}$ to be a sequence of increasing tubular neighborhoods of a subcomplex $Y\subset X$, one obtains the \emph{$Y$--end} of $X$ and the \emph{\v{C}ech homology of the $Y$--end}, denoted $\check{H}^D_{k}(Y^c)$.
In this notation $Y^c$ denotes the complement $X \setminus Y$.  Informally, this \v{C}ech homology group may also be called \emph{deep homology of the complement} and its elements are informally called \emph{deep homology classes}.

A coarse $PD(n)$ space, or coarse Poincar\'{e}\ duality space of formal dimension $n$, is a type of space introduced by Kapovich--Kleiner \cite{kapovichkleiner05} that has many large scale homological features in common with contractible $n$--manifolds (see Section~\ref{sec:rel_homology}).  For example, coarse $PD(n)$ spaces include the universal covers of closed aspherical $n$--dimensional PL manifolds or any acyclic simplicial complex that admits a free cocompact action by a Poincar\'{e}\ duality group of dimension $n$.

Suppose $X$ is a coarse $PD(n)$ space, and $Z$ is a uniformly acyclic simplicial complex. In \cite{kapovichkleiner05},
Kapovich--Kleiner show that a coarse embedding $f\colon Z \to X$ induces a duality isomorphism between 
the \v{C}ech reduced homology $\check{\widetilde{H}}{}^D_k (fZ^c)$ of the $f(Z)$--end of $X$ and the compactly supported cohomology of $Z$.
This coarse Alexander duality generalizes the classical Alexander duality for embeddings of compacta in the $n$--sphere
to a coarse setting suitable for the study of coarse embeddings between spaces or groups.

In order to facilitate computations in a homology theory, it is useful to introduce relative homology groups and study the corresponding homology long exact sequences.
In this article, we introduce a relative version of deep homology of the complement.
A simplicial pair $(Z_0,Z_1)$ is a \emph{coarse simplicial pair} if $Z_0$ and $Z_1$ are each endowed with a length metric in which each edge has length one, and the inclusion $Z_1\to Z_0$ is a coarse embedding.
Given a coarse simplicial pair $(Z_0,Z_1)$, a coarse embedding $f\colon Z_0 \to X$ induces a coarse embedding $f\colon Z_1 \to X$.
In this context, we introduce the \emph{\v{C}ech relative homology of the $\bigl(f(Z_0),f(Z_1)\bigr)$--end}, denoted $\check{H}^D_k \bigl( fZ_1^c,fZ_0^{c} \bigr)$.

A well-known difficulty with the notion of \v{C}ech homology is that it typically does not satisfy the Eilenberg--Steenrod Exactness Axiom, which requires that the homology sequence for any pair be exact \cite{EilenbergSteenrod_Axioms,Milnor_Steenrod}.
The main source of this difficulty is that \v{C}ech homology theory is based on inverse limits, but the limit of an inverse system of exact sequences need not be exact.
For this reason, the standard exact sequence techniques of classical algebraic topology are severely limited when working with \v{C}ech homology.
However, we prove in the setting of coarse embeddings into coarse $PD(n)$ spaces, the \v{C}ech homology groups as above do satisfy the Exactness Axiom.

\begin{thm}[Deep homology is exact]
\label{thm:Exact}
Let $(Z_0,Z_1)$ be a coarse simplicial pair of uniformly acyclic complexes, each with bounded geometry.
Consider a coarse $PD(n)$ space $X$ and a coarse embedding $f\colon Z_0\to X$.
Then the following \v{C}ech homology sequence is exact:
\[
   \cdots \longrightarrow 
   \check{H}_{n-k}^D(fZ_1^c,fZ_0^{c}) \xrightarrow[\quad]{\boundary}  
   \check{\widetilde{H}}{}^D_{n-k-1}(fZ_0^{c}) \longrightarrow
   \check{\widetilde{H}}{}^D_{n-k-1}(fZ_1^{c}) \longrightarrow
   \cdots
\]
\end{thm}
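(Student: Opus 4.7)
The plan is to translate the \v{C}ech homology sequence of the theorem into the standard long exact sequence of compactly supported simplicial cohomology of the pair $(Z_0,Z_1)$ via the coarse Alexander duality results of this paper. Although inverse limits of exact sequences are generally not exact---which is the usual obstruction to exactness of \v{C}ech homology---each of the three inverse limits appearing in the statement will be identified by duality with an ordinary cohomology group of a nice simplicial complex. Exactness of the \v{C}ech sequence can therefore be imported from the cohomology side, circumventing any direct argument with $\varprojlim^1$.

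Concretely, I would apply the absolute coarse Alexander duality of Kapovich--Kleiner to obtain isomorphisms
\[
\check{\widetilde{H}}{}^D_{n-k-1}(fZ_i^c) \;\cong\; \widetilde{H}^k_c(Z_i), \qquad i = 0, 1,
\]
and the relative coarse Alexander duality established earlier in this paper to obtain
\[
\check{H}^D_{n-k}(fZ_1^c, fZ_0^c) \;\cong\; H^k_c(Z_0, Z_1).
\]
Since $Z_1 \hookrightarrow Z_0$ is a coarse (hence proper) embedding of uniformly acyclic complexes, the induced map on compactly supported cochains fits into a short exact sequence $0 \to C^*_c(Z_0, Z_1) \to C^*_c(Z_0) \to C^*_c(Z_1) \to 0$, giving the familiar long exact sequence
\[
\cdots \longrightarrow H^k_c(Z_0,Z_1) \longrightarrow \widetilde{H}^k_c(Z_0) \longrightarrow \widetilde{H}^k_c(Z_1) \longrightarrow H^{k+1}_c(Z_0,Z_1) \longrightarrow \cdots
\]
It then remains to assemble these isomorphisms into a commutative ladder between the cohomology sequence and the \v{C}ech sequence of the theorem, so that exactness of the latter is inherited from exactness of the former.

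The main technical obstacle is verifying the commutativity of this ladder---in particular, naturality of the duality isomorphisms with respect to the connecting maps. The coboundary $\delta$ on the cohomology side comes from the short exact sequence of cochain complexes above, whereas $\boundary$ on the \v{C}ech side is the inverse limit of relative-homology connecting maps for pairs of tubular neighborhoods of $f(Z_0)$ and $f(Z_1)$ in $X$. To show these correspond under the duality, one constructs the duality at the chain/cochain level (most naturally via cap product with a coarse fundamental class of $X$) and tracks the boundary operator through this pairing. Naturality with respect to the inclusion-induced maps is more routine but still requires careful definition-chasing, since one is juggling a direct limit (compactly supported cochains of the $Z_i$), an inverse limit (\v{C}ech homology of the complements), and the Poincar\'{e}-duality pairing of the ambient coarse $PD(n)$ space.
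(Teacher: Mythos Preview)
Your overall strategy---build a commutative ladder between the \v{C}ech sequence and the compactly supported cohomology sequence of $(Z_0,Z_1)$, then import exactness from the cohomology side---is exactly the paper's strategy. The chain-level construction you sketch (via $\bar{P}$ and $f^\#$, which play the role of your ``cap product with a coarse fundamental class'') and the commutativity verification are essentially what the paper does in setting up diagram~\eqref{eqn:DualityofPairs}.

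However, there is a genuine circularity in your argument. You invoke ``the relative coarse Alexander duality established earlier in this paper'' to identify $\check{H}^D_{n-k}(fZ_1^c,fZ_0^c)$ with $H^k_c(Z_0,Z_1)$, and then use that isomorphism to transfer exactness. But in the paper the relative duality isomorphism (Theorem~\ref{thm:CAD_Pairs_Main}) is \emph{not} proved independently of exactness: both Theorem~\ref{thm:Exact} and Theorem~\ref{thm:CAD_Pairs_Main} are deduced together from Theorem~\ref{thm:CAD_Pairs}, and in that proof the logical order is the reverse of yours. The paper first shows the \emph{top row} of the ladder is exact after passing to inverse limits, and only then applies the ordinary five lemma to conclude that the relative duality map is an isomorphism. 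So you cannot assume the relative isomorphism as input.

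The missing ingredient is how to get exactness of the inverse-limit sequence without already knowing the relative term is a single group. The paper handles this with two lemmas you do not mention: Dydak's ``five lemma for stability'' (Lemma~\ref{lem:FiveLemma}) shows that since the absolute pro-groups $\bigl(\widetilde{H}_{n-k-1}(Y_i^R)\bigr)_R$ are stable (they are pro-isomorphic to $H^k_c(Z_i)$ by absolute Kapovich--Kleiner duality), the relative pro-group $\bigl(H_{n-k}(Y_1^R,Y_0^R)\bigr)_R$ sandwiched between them is also stable. Then Lemma~\ref{lem:lim1} says that an exact sequence of \emph{stable} inverse systems stays exact upon passing to the limit. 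This is precisely the ``direct argument with $\varprojlim^1$'' that you hoped to circumvent, and it cannot be avoided here: stability is what makes the inverse limit exact, and stability of the relative term must be established before you know it agrees with $H^k_c(Z_0,Z_1)$.
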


We refer to Section~\ref{sec:CoarseGeometry} for precise definitions of uniformly acyclic, bounded geometry, and coarse embeddings.

In particular, as explained by Milnor in \cite{Milnor_Steenrod}, exactness implies that these \v{C}ech homology groups coincide with the corresponding Steenrod homology groups (equivalently, the filtered locally finite homology groups) of the $f(K)$--end as defined in \cite[\S 14.2]{geoghegan}.
A key technical step in the proof of Theorem~\ref{thm:Exact} is provided by a result of Dydak \cite{dydak} analogous to the five lemma, but in the setting of exact sequences of inverse sequences (see Lemma~\ref{lem:FiveLemma}).

Generalizing Kapovich--Kleiner \cite{kapovichkleiner05}, we show there is a duality isomorphism between compactly supported relative cohomology and relative \v{C}ech homology of the complement, which induces an isomorphism of long exact sequences in the following sense.

\begin{thm}[Coarse Alexander Duality of Pairs]
\label{thm:CAD_Pairs_Main}
Let $(Z_0,Z_1)$ be a coarse simplicial pair of uniformly acyclic complexes, each with bounded geometry.
Consider a coarse $PD(n)$ space $X$ and a coarse embedding $f\colon Z_0\to X$.
There exists a commutative diagram 
\[
\xymatrix@C-6pt{
   \cdots \ar[r] &
   \displaystyle
   \check{H}_{n-k}^D(fZ_1^c,fZ_0^{c}) \ar[r]^{\,\,\,\,\,\boundary} \ar[d]^{A}_{\cong} & 
   \displaystyle \check{\widetilde{H}}{}^D_{n-k-1}(fZ_0^{c}) \ar[r] \ar[d]^{A}_{\cong}  &
   \displaystyle \check{\widetilde{H}}{}^D_{n-k-1}(fZ_1^{c})  \ar[r] \ar[d]^{A}_{\cong}  &
   \cdots \\
   \cdots \ar[r]^{\!\!\!\!\!\!\!\!\!\!\!\!\!\delta} &
   H_c^k(Z_0,Z_1) \ar[r] &
   H_c^k(Z_0)   \ar[r] &
   H_c^k(Z_1)    \ar[r]^{\ \ \ \delta} &
   \cdots
}
\]
in which the vertical maps are coarse Alexander duality isomorphisms, and the rows are homology and cohomology long exact sequences.
\end{thm}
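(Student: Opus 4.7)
The plan is to reduce the proof to an application of the Five Lemma. The top row is exact by Theorem~\ref{thm:Exact}. The bottom row is the standard long exact sequence in compactly supported cohomology of the simplicial pair $(Z_0,Z_1)$, which is exact without further hypotheses. The two vertical maps involving $Z_0$ and $Z_1$ alone are isomorphisms by Kapovich--Kleiner's absolute coarse Alexander duality~\cite{kapovichkleiner05}. Thus, once the relative duality map $A \colon \check{H}^D_{n-k}(fZ_1^c,fZ_0^{c}) \to H^k_c(Z_0,Z_1)$ is constructed and the three squares are shown to commute, the Five Lemma forces $A$ to be an isomorphism as well.

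To construct the map, I would work at the level of the inverse systems defining each \v{C}ech group. For each radius $R>0$, set $W_R = \overline{X \setminus N_R(f(Z_0))}$ and $V_R = \overline{X \setminus N_R(f(Z_1))}$; the inclusion $Z_1 \hookrightarrow Z_0$ yields $W_R \subseteq V_R$ for every $R$. By definition, $\check{H}_{n-k}^D(fZ_1^c,fZ_0^{c}) = \varprojlim_R H_{n-k}(V_R,W_R)$. At each finite stage, one applies cap product with a coarse fundamental class of the coarse $PD(n)$ space $X$, as in the absolute construction, to produce a natural map from $H_{n-k}(V_R,W_R)$ into the compactly supported cohomology of suitable simplicial thickenings of the pair $\bigl(f(Z_0),f(Z_1)\bigr)$, and then identifies this target with $H^k_c(Z_0,Z_1)$ using uniform acyclicity. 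Passing to the inverse limit, with bounded geometry controlling the relevant Mittag--Leffler conditions as in Theorem~\ref{thm:Exact}, produces the desired map $A$.

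Commutativity of the two squares not involving the connecting maps should be a naturality statement for the Kapovich--Kleiner construction, applied to the inclusion $Z_1 \hookrightarrow Z_0$ and the corresponding inclusions of inverse systems of complements; it should be routine but must be checked. The main obstacle is commutativity of the leftmost square, which identifies the \v{C}ech boundary $\partial$ from Theorem~\ref{thm:Exact} with the cohomological coboundary $\delta$. The boundary $\partial$ is assembled from the snake-lemma boundary on each pair $(V_R,W_R)$ and passed to the inverse limit, invoking Dydak's lemma (Lemma~\ref{lem:FiveLemma}) to keep exactness in the limit, while $\delta$ is the usual connecting map of the short exact sequence of compactly supported cochain complexes of $(Z_0,Z_1)$. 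Matching them requires a chain-level verification that cap product with the fundamental class intertwines the short exact sequence $0 \to C_*(W_R) \to C_*(V_R) \to C_*(V_R,W_R) \to 0$ with the cochain short exact sequence of the pair, uniformly in $R$, and that this intertwining behaves well under the limit. I expect this to follow from the standard naturality of cap product with respect to connecting homomorphisms, but careful bookkeeping through the inverse system will be required, which is where most of the work of the proof will lie.
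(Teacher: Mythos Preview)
Your approach is essentially the paper's: build the ladder diagram at each finite radius, check commutativity there, pass to the inverse limit, and invoke the Five Lemma using the absolute Kapovich--Kleiner isomorphisms on the two non-relative columns. Two small points where the paper differs from your sketch are worth flagging. First, the duality map is not constructed via cap product with a fundamental class; the definition of a coarse $PD(n)$ space in this paper is the data of bounded-displacement chain maps $P,\bar P$ between $C_{n-\circ}(X)$ and $C_c^\circ(X)$, and the relative map $A$ is obtained by restricting $\bar P$ (followed by $f^\#$) to the relevant pairs of complexes---so your ``cap product'' step should be replaced by that restriction. Second, the paper does not cite Theorem~\ref{thm:Exact} as input; rather, exactness of the limiting top row and the relative isomorphism are proved together in a single argument (Theorem~\ref{thm:CAD_Pairs}): stability of the relative pro-group is deduced from Dydak's five lemma for stability (Lemma~\ref{lem:FiveLemma}) applied to the finite-stage long exact sequence, then Lemma~\ref{lem:lim1} gives exactness in the limit, and only then does the ordinary Five Lemma yield the relative isomorphism. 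Your invocation of ``Mittag--Leffler conditions'' is therefore slightly off target---what is actually used is stability (pro-isomorphism to a rudimentary system), which is strictly stronger.
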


\subsection{Jordan separation and Jordan cycles}
The classical Alexander duality theorem proves for any embedding $S^{n-2} \to S^n$, the complement $S^n \setminus S^{n-2}$ always has the same reduced homology as $S^1$, an infinite cyclic group concentrated in dimension one.  
A generator of this cyclic homology group is called a \emph{Jordan cycle}, by analogy with the codimension-one case of Jordan--Brouwer separation.
For a related example, consider the join $\Theta_k^{n-1}$ of $S^{n-2}$ with a discrete set of $k$ points. 
According to Alexander duality, any subspace of $S^n$ homeomorphic to $\Theta_k^{n-1}$ separates the $n$--sphere into $k$ components arranged cyclically in a pattern corresponding to the Jordan cycle.
To make precise the similarity between the Jordan cycle and the adjacency of components of $S^n \setminus \Theta_k^{n-1}$, one may use the long exact sequence of a pair as a fundamental computational tool.

The main objects of study in coarse Alexander duality are complexes coarsely embedded in a coarse $PD(n+1)$ space $X$.  To keep the analogy with the classical setting, we imagine that $X$ is compactified by a fictitious sphere $S^n$ at infinity (a more realistic fantasy might involve a homology $n$--manifold that is a homology $n$--sphere as in \cite{Bestvina_LocalHomology}) and that coarse embeddings in $X$ correspond to topological embeddings in $S^n$.

We demonstrate the use of relative coarse Alexander duality by formalizing the notion of a Jordan cycle in the coarse setting, a notion that is implicit in work of Kapovich--Kleiner~\cite{kapovichkleiner05}. 
In \cite{kapovichkleiner05} and \cite{hruskastarktran}, this notion is used to prove that certain groups are not $3$--manifold groups.

By analogy with the embeddings $\Theta_k^{n-1} \rightarrow S^n$, we consider coarse embeddings into a coarse $PD(n+1)$ space of a union of $k$ half-spaces $\R^{n}_{\geq 0}$ glued together along their boundaries to form a space $W_k$. For $n=2$, these spaces arise naturally in Cayley $2$--complexes for Baumslag--Solitar groups and certain other amalgams of surface groups; see \cite{kapovichkleiner05, hruskastarktran} and Section~\ref{sec:comm}. The coarse Alexander duality theorem immediately implies that a coarse embedding of $W_k$ into a coarse $PD(n+1)$ space $X$ coarsely separates it into $k$ deep components corresponding to the set of relative ends $\cE\bigl(X, f(W_k)\bigr)$. 
    
We use $1$--dimensional relative deep homology groups and the long exact sequence of a pair to define when two deep components of $\cE\bigl(X, f(W_k)\bigr)$ are adjacent. More specifically, a pair of deep components is \emph{adjacent} if the image of the connecting homomorphism $\partial\colon \check{H}_1^D\bigl((fW_{k-1}^i)^c, fW_k^c \big) \rightarrow \check{H}_0^D(fW_k^c)$ for some $i$ is supported on the pair of components, where $W_{k-1}^i \subset W_k$ is obtained by removing the $i^{th}$ halfspace from $W_k$. These notions define an adjacency graph $\Gamma_k$, with vertices corresponding to deep components, and an edge if two components are adjacent.
    
The coarse Jordan cycle, a $1$--dimensional deep homology class, is then used to describe the adjacency relations and the structure of the graph $\Gamma_k$. Let $W_0 \subset W_k$ denote the image of the boundaries of the halfspaces in $W_k$ after gluing. Coarse Alexander duality implies that the $1$--dimensional homology $\check{H}_1^D(fW_0^c)$ is infinite cyclic. We call a generator of this homology group a \emph{Jordan cycle}.
Using relative homology, we recover a proof of the following result that extends a theorem of Kapovich--Kleiner \cite[Lem.~7.11]{kapovichkleiner05}.

\begin{thm}
\label{thm:AdjacencyGraph}
\textup{(}Theorem~\ref{thm:map_of_adj_graphs}.\textup{)}
The adjacency graph $\Gamma_k$ is a circuit. 
If $W_{k-1}^i$ is obtained from $W_k$ by removing the $i$th halfspace, the inclusion $W_{k-1}^i \to W_k$ induces a natural map of cycles $\Gamma_k \to \Gamma_{k-1}^i$ given by collapsing a single edge to a vertex.
\end{thm}

Although the extra details in the above statement are implicit in the proof of \cite[Lem.~7.11]{kapovichkleiner05}, we make this conclusion more explicit using the general methods of this paper. 
These extra details are required to complete the proof of Theorem~5.6 in \cite{hruskastarktran}.
We also use these extra details in an essential way in the proof of Theorem~\ref{thm:SurfaceAmalgam}, discussed below.
We suspect that these general methods may prove useful more broadly for studying the relationship between coarse embeddings of groups and coarse embeddings of their various subgroups.

\subsection{Applications to $3$--manifolds}

A \emph{Kleinian group} is a discrete group of isometries of $\Hyp^3$.  
Torsion-free groups that are virtually Kleinian but that are not the fundamental group of any $3$--manifold are constructed in \cite{kapovichkleiner} and \cite{hruskastarktran}.
Section~\ref{sec:comm} is an extended study of one example that arises in \cite{hruskastarktran} for which a more subtle virtual geometric property arises.
We show that, among the family of fundamental groups of $3$--manifolds, those that admit Kleinian representations are not closed under commensurability.

The fundamental group of the $2$--complex formed by gluing a closed orientable hyperbolic surface to a torus along an essential simple closed curve in each surface is a geometrically finite Kleinian group (see Proposition~\ref{prop:Accidental}).
In contrast, we show that a certain amalgam of a closed hyperbolic surface group with a Klein bottle group over a $\Z$ subgroup does not admit a Kleinian representation, even though it is a $3$--manifold group and it is virtually Kleinian.

\begin{thm}
\label{thm:SurfaceAmalgam}
Let $X$ be the space obtained from a closed orientable surface of negative Euler characteristic and a Klein bottle by gluing a nonseparating simple closed curve on the surface to an orientation reversing simple closed curve on the Klein bottle. Then $G=\pi_1(X)$ satisfies the following properties. 
\begin{enumerate}
    \item
    \label{item:TwistedGluing}
    $G$ is the fundamental group of a $3$--manifold.
    \item
    \label{item:Hyperbolization}
    $G$ has an index--$2$ subgroup that is isomorphic to a Kleinian group. 
    \item 
    \label{item:NotKleinian}
    $G$ is not isomorphic to a Kleinian group. 
    \end{enumerate}
\end{thm}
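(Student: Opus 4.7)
My plan proceeds in three parts, one for each enumerated conclusion.

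For part~(\ref{item:TwistedGluing}), I want to exhibit an explicit compact $3$-manifold $M$ with $\pi_1(M) \cong G$. The $2$-complex $X$ can be realized as the spine of an orientable $3$-manifold: locally, the four halfplanes (two from $\Sigma$, two from $K$) meeting along the gluing circle are the spine of a neighborhood in $\R^3$, while globally the non-orientability of $K$ requires a $180^\circ$ twist interchanging the two $K$-halfplanes as one traverses the gluing circle (reflecting the one-sidedness of $\delta$). After verifying that this twist is compatible with an orientable thickening (for instance by gluing a thickened $\Sigma$ to the orientable twisted $I$-bundle $N_K$ over $K$ and choosing the identification on boundary tori carefully so that $\gamma$ is identified with $\delta$ in the amalgam), a Seifert--van Kampen computation gives $\pi_1(M) \cong G$.

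For part~(\ref{item:Hyperbolization}), the orientation character of $K$, namely $\pi_1(K)\to \Z/2$ sending $\delta\mapsto -1$, extends to a homomorphism $\phi\colon G\to \Z/2$ by choosing any extension to $\pi_1(\Sigma)$ with $\phi(\gamma)=-1$ (which exists because $[\gamma]$ is nontrivial in $H_1(\Sigma;\Z/2)$, as $\gamma$ is nonseparating). Let $H := \ker\phi$, an index--$2$ subgroup. Bass--Serre theory applied to the associated two-sheeted cover of the defining graph of groups identifies $H$ with $\pi_1(\widetilde{\Sigma}) \ast_{\langle \gamma^2\rangle} \pi_1(T^2)$, where $\widetilde{\Sigma}$ is the connected double cover of $\Sigma$ determined by $\phi$ (a closed orientable hyperbolic surface of higher genus) and $T^2$ is the orientation cover of $K$. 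Thus $H$ is the fundamental group of a $2$-complex gluing a closed orientable hyperbolic surface to a torus along essential simple closed curves, so Proposition~\ref{prop:Accidental} supplies a geometrically finite Kleinian realization.

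For part~(\ref{item:NotKleinian}), the main obstacle, I would assume for contradiction that $G$ admits a discrete faithful representation into $\Isom(\Hyp^3)$. Because $H$ is geometrically finite by part~(\ref{item:Hyperbolization}) and geometric finiteness passes between commensurable Kleinian groups, the representation of $G$ is geometrically finite, giving a proper cocompact action of $G$ on a coarse $PD(3)$ space (for instance a truncated convex core). The universal cover $\widetilde{X}$ coarsely embeds into this space, and at each lift $\ell$ of the gluing circle the two $\widetilde{\Sigma}$-halfplanes and two $\widetilde{K}$-halfplanes containing $\ell$ form a coarsely embedded copy of $W_4$. Theorem~\ref{thm:AdjacencyGraph} produces a $4$-cycle adjacency graph with its canonical Jordan cycle in $\check{H}_1^D\bigl(fW_0^{c}\bigr)$. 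The edge stabilizer $\langle \delta\rangle$ acts on $W_4$: since $\delta$ acts as a glide reflection on the universal cover $\R^2$ of $K$, it fixes each $\Sigma$-halfplane setwise but swaps the two $K$-halfplanes. Because the Jordan cycle is preserved up to sign, this involution must be a symmetry of the $4$-cycle, which forces the cyclic ordering around $\ell$ to alternate $\Sigma,K,\Sigma,K$. The heart of the argument, and the principal technical hurdle, is to show that this local datum cannot be globalized: using $G$-equivariance of the Jordan cycle together with the structural maps between adjacency graphs of $W_k$ and $W_{k-1}^i$ from Section~\ref{sec:comm}, one shows that propagating the collection of Jordan cycles across the Bass--Serre tree of $G$ is obstructed by the Klein bottle relation $\delta b\delta^{-1}=b^{-1}$ acting on neighboring edge spaces via the vertex group $\pi_1(K)$. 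The resulting incompatibility contradicts the existence of the coarse embedding, so no such representation exists.
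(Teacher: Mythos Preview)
Your proofs of parts~(\ref{item:TwistedGluing}) and~(\ref{item:Hyperbolization}) are essentially what the paper does: a direct thickening construction for the $3$--manifold (cited from \cite{hruskastarktran}) and the explicit double cover combined with Proposition~\ref{prop:Accidental}.

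For part~(\ref{item:NotKleinian}) you have the setup right through the point where the action of the edge element on the $4$--cycle forces the cyclic order $\Sigma,K,\Sigma,K$; this matches the paper exactly. But your endgame has a genuine gap. You propose a purely coarse--topological contradiction, ``propagating Jordan cycles across the Bass--Serre tree'' and finding an obstruction from the Klein bottle relation. This cannot succeed: by part~(\ref{item:TwistedGluing}), $G$ \emph{does} act properly and cocompactly on the universal cover of a compact $3$--manifold, which is a coarse $PD(3)$ space, and in that model every one of your adjacency graphs and Jordan cycles exists and fits together consistently. So no argument that applies to an arbitrary coarse $PD(3)$ model can yield a contradiction; something specific to $\Hyp^3$ is required. (Relatedly, there is no need to pass to a truncated convex core via geometric finiteness --- the paper simply uses $\Hyp^3$ itself as the coarse $PD(3)$ space, and the orbit map of the proper action already gives the coarse embedding.)

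The paper's actual contradiction exploits hyperbolic geometry directly. Since $\pi_1(K^2)$ is virtually $\Z^2$, it acts parabolically and stabilizes a horosphere $F$; malnormality of $\pi_1(K^2)$ in $G$ (from the $\CAT(0)$ with isolated flats structure on $\widetilde X$) lets one shrink $F$ so that it is disjoint from all its $G$--translates, and then \emph{all} of those translates lie on a single side of $F$. Now the alternating order you correctly derived says that the two $\Sigma$--halfplanes $H_1,H_2$ are opposite edges in the $4$--cycle, so collapsing the two $K$--edges (Example~\ref{ex}, Lemma~\ref{l1}) sends $H_1$ and $H_2$ into the two \emph{different} deep components of $\Hyp^3\setminus F$. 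Choosing Klein bottle lifts $\widetilde K_1,\widetilde K_2$ far out along $H_1,H_2$ produces horosphere translates $g_1F,g_2F$ on opposite sides of $F$, contradicting the one--sidedness. That horosphere--specific separation fact is the missing ingredient in your proposal.
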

  
Statement~(\ref{item:TwistedGluing}) of Theorem~\ref{thm:SurfaceAmalgam} is proved by a direct construction in \cite{hruskastarktran}. Statement~(\ref{item:Hyperbolization}) is a simple application of Thurston's Hyperbolization Theorem (see Proposition~\ref{prop:Accidental}).
The proof of Statement~(\ref{item:NotKleinian}) uses Jordan cycles as a tool.  We briefly sketch this proof here.  In order to convert this sketch into a proof, we use the naturality properties of Jordan cycles that are established in Theorem~\ref{thm:AdjacencyGraph}.
We refer the reader to Section~\ref{sec:comm} for more details.
The universal cover of $X$ contains the union of a Euclidean plane and a hyperbolic plane that intersect in a line.  The stabilizer $H$ of this line acts on the corresponding adjacency graph given by Theorem~\ref{thm:AdjacencyGraph}.
Since this graph is a circuit of length four, its symmetry group is dihedral.
As the induced action of $H$ on the cycle contains a transposition of the Euclidean halfplanes, we conclude that the Euclidean and hyperbolic planes must intersect transversely (in a coarse sense).
On the other hand, in any Kleinian representation, the Euclidean plane must coarsely map onto a horosphere, giving a contradiction.
We conclude that the virtually Kleinian groups given by Theorem~\ref{thm:SurfaceAmalgam} are $3$--manifold groups, but any such $3$--manifold must have a nontrivial geometric decomposition.

Among the family of irreducible $3$--manifolds (possibly with boundary) with zero Euler characteristic,
the property of admitting a geometric structure (in the sense of Thurston) is a quasi-isometry invariant of the fundamental group by work of Kapovich--Leeb~\cite{KapovichLeeb_QI} and the Geometrization Theorem. 
In contrast, among compact, irreducible  $3$--manifolds whose boundary has negative Euler characteristic, the property of admitting a geometric structure is not a quasi-isometry invariant of the fundamental group.  Indeed Theorem~\ref{thm:SurfaceAmalgam} illustrates the existence of two compact $3$--manifolds with the same fundamental group such that one is geometric and the other is not.
On the other hand, we note that the canonical geometric decomposition along tori and annuli as defined, for example, in \cite{NeumannSwarup_Canonical} is nontrivial for these manifolds, even though one admits a hyperbolic structure.

\subsection{Potential directions for future study}

The \emph{action dimension} of a discrete group~$G$ is the minimum dimension of a contractible manifold $M$ that admits a proper action of the group~$G$ (see Bestvina--Kapovich--Kleiner \cite{bestvinakapovichkleiner}).
If $G$ is torsion free, then the aspherical manifold $M/G$ is a finite-dimensional model for the classifying space $BG$, so the action dimension is greater than or equal to the geometric dimension, \emph{i.e.}, the minimal dimension of a $K(G,1)$ CW complex.
In \cite{hruskastarktran}, the authors use coarse Alexander duality to show that many amalgams of surface groups over cyclic groups have action dimension four, generalizing examples due to Kapovich--Kleiner \cite{kapovichkleiner05}.

Broadly speaking, the concepts of coarse Alexander duality and its relative version may be useful in the study of actions of groups on contractible $n$--manifolds $M$, in terms of restricting the dimension $n$ (as in \cite{kapovichkleiner05, hruskastarktran}) or restricting possible geometric structures on $M$ (as in Theorem~\ref{thm:SurfaceAmalgam}).
Obstructions to coarse embedding (in particular, the embedding induced by a proper action) are often related to the shape of the complement far from the orbit of the group. Using coarse Alexander duality, one can detect the shape of the complement via the relative \v{C}ech homology of the complement. One may be able to use this information as a tool to prove that a group $G$ coarsely embeds into a contractible $n$--manifold $M$ but cannot act properly on $M$.  

Coarse Alexander duality and its relative version could potentially be used to study the geometric relationship between a finitely generated group $G$ and its subgroups. More precisely, we highlight the following question for which coarse Alexander duality may be useful.

\begin{prob}
Classify subgroups of $G$ up to equivalence under the action of the quasi-isometry group $QI(G)$.
More precisely, given two subgroups $H$ and $K$ of a finitely generated group $G$, determine whether there exists a quasi-isometry of $G$ that maps $H$ to $K$ within a bounded distance.  
\end{prob}

Study of the above problem may illuminate the general study of the quasi-isometry group of $G$ in the presence of ``pattern rigidity'' results. In the literature, various quasi-isometry invariants of a pair $(G,H)$ with $H\le G$ have been studied, such as subgroup distortion and its higher dimensional variants, relative divergence, and many others, but they have limited use. Using coarse Alexander duality, one may be able to use the \v{C}ech homology of the complement of $H\le G$ to better understand the large-scale geometry of~$G$. 

Suppose a Poincar\'{e} duality group $G$ of dimension $n$ acts freely, cocompactly, and simplicially on a contractible coarse $PD(n)$ space $X$.  
The following is a special case of a conjecture due to Bestvina on $\mathcal{Z}$--set compactifications:
Does $G$ admit a  $\mathcal{Z}$--structure boundary in the sense of \cite{Bestvina_LocalHomology}?
If so, then a theorem of Bestvina states that (in many cases) such a boundary must be a homology manifold that is also a homology sphere \cite[Thm.~2.8]{Bestvina_LocalHomology}.
The existence of such a boundary at infinity would provide a more concrete interpretation of the ends of filtered spaces (and of pairs of filtrations) studied in this paper as direct analogues of classical Alexander duality of subsets (and pairs of subsets) of homology spheres.

\subsection{Organization of the paper}

Section~\ref{sec:Stability} contains a review of inverse systems and stability properties.
In Section~\ref{sec:CoarseGeometry}, we review background on coarse geometry and applications to the study of chain maps.
Section~\ref{sec:DeepHomology} introduces \v{C}ech homology of complements in ordinary and relative forms and discusses the relation with the notion of ends of pairs of spaces.
In Section~\ref{sec:CoarseAlexanderDuality}, we review the construction of coarse Alexander duality due to Kapovich--Kleiner.  We establish terminology and notation that are useful for introducing the relative version in Section~\ref{sec:DualityForPairs}.
Theorems \ref{thm:Exact}~and~\ref{thm:CAD_Pairs_Main} are immediate consequences of Theorem~\ref{thm:CAD_Pairs}.
Section~\ref{sec:Jordan_adj_graph} contains the study of Jordan cycles and the proof of Theorem~\ref{thm:AdjacencyGraph}.
An application to $3$--dimensional manifolds is discussed in Section~\ref{sec:comm}, which includes the proof of Theorem~\ref{thm:SurfaceAmalgam}.

\subsection{Acknowledgments}
The authors are thankful for helpful discussions with Arka Banerjee, Max Forester, Ross Geoghegan, Craig Guilbault, Joseph Maher, Alexander Margolis, Boris Okun, Jing Tao, and Genevieve Walsh.
We are grateful for the feedback of the referee, whose comments helped to improve the exposition of the paper.
The first author is partially supported by grants \#318815  and \#714338 from the Simons Foundation. The second author was supported by the Azrieli Foundation; was partially supported at the Technion by a Zuckerman Fellowship; was supported by NSF RTG grant $\#$1840190; and was supported by NSF Grant No. DMS-2204339. The third author was partially supported by an AMS-Simons Travel Grant.

\section{Stability of inverse sequences}
\label{sec:Stability}

In this section, we review some definitions and useful properties of inverse systems and inverse sequences.
For detailed background, we refer the reader to either Marde\v{s}i\'{c}--Segal \cite{MardesicSegal82} or Geoghegan \cite{geoghegan}.
In this paper we mainly treat inverse systems using the category of pro-groups.
In fact, our main results deal only with inverse systems of abelian groups, so they could be expressed in the category of pro-(Abelian groups).  In the language of category theory, we note that the inclusion functor from the pro-Abelian category to the pro-group category is a fully faithful embedding.
We work with pro-groups in order to keep statements simpler and more general whenever possible.

We note that in \cite{kapovichkleiner05} Kapovich--Kleiner work in a related category, which they call \emph{Approx}.
As explained in \cite[\S 4]{kapovichkleiner05}, their main theorems dealing with inverse sequences in the \emph{Approx} category can all be expressed in the language of pro-(Abelian groups).  Throughout this paper we apply this translation without comment whenever citing their results.

\begin{defn}[Inverse system]
A \emph{directed set} is a partially ordered set $\mathcal{A}$ such that for each $\alpha_0,\alpha_1 \in \mathcal{A}$ there exists $\alpha \in \mathcal{A}$ with $\alpha_0\le \alpha$ and $\alpha_1 \le \alpha$.

An \emph{inverse system} $(G_\alpha,p_\alpha^{\alpha'})$ consists of a family of groups $(G_\alpha)_{\alpha\in\mathcal{A}}$ indexed by a directed set $\mathcal{A}$ and a collection of homomorphisms $p_{\alpha}^{\alpha'} \colon G_{\alpha'} \to G_\alpha$ for $\alpha\le\alpha'$ in $\mathcal{A}$ satisfying the following rules:
\begin{itemize}
    \item $p_{\alpha}^{\alpha'} p_{\alpha'}^{\alpha''}=p_{\alpha}^{\alpha''}$ whenever $\alpha\le\alpha'\le\alpha''$, and
    \item $p_\alpha^\alpha$ is the identity homomorphism for each $\alpha$.
\end{itemize}
The groups $G_\alpha$ are the \emph{terms}, the maps $p_\alpha^{\alpha'}$ are the \emph{bonding maps}, and $\mathcal{A}$ is the \emph{index set} of the inverse system.
When the bonding maps and index set are clear from context, the inverse system $(G_\alpha,p_\alpha^{\alpha'})$  may be denoted by $(G_\alpha)_{\alpha\in \mathcal{A}}$ or $(G_\alpha)_\alpha$ or $(G_\alpha)$.

An inverse system indexed by the natural numbers is an \emph{inverse sequence}.  An inverse system indexed by a singleton is a \emph{rudimentary system} and is denoted by $(G)$, where $G$ is its only term.
\end{defn}

\begin{defn}[Morphisms]
A \emph{morphism of inverse systems}
\[
   f \colon (G_\alpha)_{\alpha \in \mathcal{A}} \to (H_\beta)_{\beta\in\mathcal{B}}
\]
consists of an increasing function $\phi\colon \mathcal{B} \to \mathcal{A}$ called the \emph{index function} and for each $\beta\in \mathcal{B}$ a homomorphism $f_\beta\colon G_{\phi(\beta)} \to H_\beta$ such that whenever $\beta \le \beta'$
the following diagram commutes:
\[
\xymatrix{
   G_{\phi(\beta)} \ar[d]_{f_\beta} & G_{\phi(\beta')} \ar[d]^{f_{\beta'}} \ar[l]  \\
   H_\beta & H_{\beta'} \ar[l]
}
\]

For simplicity, we suppress the labels of the bonding maps between terms of the same inverse system.
The notion of composition of morphisms is defined in the obvious way.

A \emph{level morphism} between inverse systems indexed by the same set $\mathcal{A}$ is a morphism whose index function $\phi\colon \mathcal{A}\to\mathcal{A}$ is the identity function.
The \emph{identity morphism} $(G_\alpha)\to(G_\alpha)$ is the level morphism consisting of the family of identity maps $G_\alpha\to G_\alpha$ for each $\alpha\in \mathcal{A}$.

Two morphisms $f$ and $f'$ of inverse systems
$(G_\alpha)_{\alpha\in\mathcal{A}}\to (H_\beta)_{\beta\in\mathcal{B}}$
with index functions $\phi$ and $\phi'$ are \emph{equivalent} if for each $\beta\in\mathcal{B}$ there exists $\alpha \in \mathcal{A}$ with $\alpha \ge \phi(\beta)$ and $\alpha \ge\phi'(\beta)$ such that the following diagram commutes:
\[
\xymatrix{
   G_{\phi(\beta)} \ar[d]_{f_\beta} & G_{\alpha} \ar[d]\ar[l]  \\
   H_\beta & G_{\phi'(\beta)} \ar[l]_{f'_\beta}
}
\]
\end{defn}

\begin{defn}[Pro-groups]
The category of \emph{pro-groups} is defined as follows:
the objects are inverse systems of groups, and the morphisms $[f]$ are equivalence classes of morphisms $f$ between inverse systems.
\end{defn}

\begin{defn}
A morphism of inverse systems $f\colon (G_\alpha) \to (H_\beta)$ is a \emph{pro-isomorphism} if it represents an isomorphism in the category of pro-groups, in other words, if
there exists a morphism of inverse systems $g\colon (H_\beta)\to(G_\alpha)$ such that $fg$ and $gf$ are each equivalent to identity morphisms.
An inverse system is \emph{stable} if it is pro-isomorphic to a rudimentary inverse system.
\end{defn}

If $\mathcal{A}'$ is a cofinal subset of the directed set $\mathcal{A}$, then the restriction of an inverse system $(G_\alpha)_{\alpha\in\mathcal{A}}$ to the subsystem indexed by $\mathcal{A}'$ is a pro-isomorphism (see \cite{MardesicSegal82}, \S I.1.1, Thm.~1).

The \emph{inverse limit} of an inverse system of groups $(G_\alpha)$ is the group
\[
   \varprojlim (G_\alpha)
   =
   \Bigl\{\, (g_\alpha) \in \prod_{\alpha\in \mathcal{A}} G_\alpha \Bigm| \text{$p_\alpha^{\alpha'}(g_{\alpha'}) = g_{\alpha}$ for all $\alpha\le\alpha'$} \,\Bigr\}.
\]

The following well-known result is a basic tool for the study of inverse limits.  (See, for example, \cite[\S I.5.1]{MardesicSegal82}.)

\begin{prop}
\label{prop:Morphisms}
A morphism of inverse systems of groups induces a unique homomorphism of their inverse limits via the universal mapping property of inverse limits.
Equivalent morphisms induce the same homomorphism. 
The assignment $(G_\alpha) \longmapsto \varprojlim (G_\alpha)$ is a covariant functor.
A pro-isomorphism of inverse systems induces an isomorphism of inverse limits.
\end{prop}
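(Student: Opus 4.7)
The plan is to construct the induced homomorphism explicitly and then verify each of the remaining assertions essentially by diagram chase. Given a morphism $f\colon (G_\alpha) \to (H_\beta)$ with index function $\phi\colon \mathcal{B}\to\mathcal{A}$ and component maps $f_\beta\colon G_{\phi(\beta)} \to H_\beta$, I would define $\bar{f}\colon \varprojlim G_\alpha \to \varprojlim H_\beta$ by the rule
\[
   \bar{f}\bigl((g_\alpha)_{\alpha\in\mathcal{A}}\bigr) \;=\; \bigl(f_\beta(g_{\phi(\beta)})\bigr)_{\beta\in\mathcal{B}}.
\]
The first step is to confirm that the tuple on the right lies in $\varprojlim H_\beta$, i.e., that it is coherent with respect to the bonding maps $q_\beta^{\beta'}\colon H_{\beta'} \to H_\beta$. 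This is where the two compatibility conditions combine: the commutative square in the definition of a morphism of inverse systems gives $q_\beta^{\beta'} f_{\beta'} = f_\beta p_{\phi(\beta)}^{\phi(\beta')}$, while the coherence of $(g_\alpha)$ yields $p_{\phi(\beta)}^{\phi(\beta')}(g_{\phi(\beta')}) = g_{\phi(\beta)}$; together they give $q_\beta^{\beta'}(f_{\beta'}(g_{\phi(\beta')})) = f_\beta(g_{\phi(\beta)})$, as required. That $\bar{f}$ is a homomorphism is immediate since each $f_\beta$ is; uniqueness follows from the universal property, because $\bar{f}$ is determined by its projections to the factors $H_\beta$.

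For the equivalence statement, I would fix equivalent morphisms $f,f'$, an element $(g_\alpha)\in \varprojlim G_\alpha$, and an index $\beta\in\mathcal{B}$. Choosing $\alpha\in\mathcal{A}$ with $\alpha \ge \phi(\beta), \phi'(\beta)$ as guaranteed by the definition of equivalence, the coherence of $(g_\alpha)$ gives $g_{\phi(\beta)} = p_{\phi(\beta)}^{\alpha}(g_\alpha)$ and $g_{\phi'(\beta)} = p_{\phi'(\beta)}^{\alpha}(g_\alpha)$. The commutative diagram in the definition of equivalence then forces $f_\beta(g_{\phi(\beta)}) = f'_\beta(g_{\phi'(\beta)})$, so $\bar{f}=\bar{f}'$. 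Functoriality follows by similarly unpacking index functions: the identity level morphism manifestly induces the identity on $\varprojlim$, and for composable morphisms one checks directly that the composite index function and component maps produce the composite of the two induced limit maps.

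Finally, the pro-isomorphism statement is an immediate corollary of the first three. If $f\colon (G_\alpha) \to (H_\beta)$ represents an isomorphism in the pro category with pro-inverse represented by $g$, then $fg$ and $gf$ are equivalent to the respective identity morphisms. Applying the equivalence statement, $\bar{f}\bar{g} = \overline{fg}$ equals the identity on $\varprojlim H_\beta$ and $\bar{g}\bar{f}$ equals the identity on $\varprojlim G_\alpha$, so $\bar{f}$ is an isomorphism with inverse $\bar{g}$. I do not expect a real obstacle in this argument; the only step requiring genuine care is the verification that $\bigl(f_\beta(g_{\phi(\beta)})\bigr)_\beta$ is actually coherent, since that is the single point at which the compatibility axioms of a morphism of inverse systems and the coherence of a point in the limit must be combined.
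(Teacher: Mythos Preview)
Your argument is correct; the paper does not actually prove this proposition but simply states it as a well-known basic tool and refers the reader to Marde\v{s}i\'{c}--Segal \cite[\S I.5.1]{MardesicSegal82}. Your explicit construction of $\bar{f}$ and the subsequent diagram chases are the standard verification and fill in exactly what the cited reference contains.
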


\begin{defn}
An \emph{exact sequence of inverse systems} with a common index set $\mathcal{A}$ consists of a sequence of level morphisms
\[
   \cdots \longrightarrow (A_\alpha) \longrightarrow (B_\alpha) \longrightarrow (C_\alpha) \longrightarrow \cdots
\]
such that for each fixed $\alpha\in\mathcal{A}$ the sequence of terms
\[
   \cdots \longrightarrow A_\alpha \longrightarrow B_\alpha \longrightarrow C_\alpha \longrightarrow \cdots
\]
is exact.
\end{defn}

Dydak \cite{dydak} proved the following analogue of the five lemma for an exact sequence of inverse sequences. 

\begin{lem}[Five lemma for stability]
\label{lem:FiveLemma}
Given an exact sequence of inverse sequences \textup{(}indexed by natural numbers\textup{)}
\[
   (A_m) \longrightarrow (B_m) \longrightarrow (C_m) \longrightarrow (D_m) \longrightarrow (E_m)
\]
in which $(A_m)$, $(B_m)$, $(D_m)$, and $(E_m)$ are stable, then $(C_m)$ is stable.
\end{lem}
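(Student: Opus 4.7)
The plan is to reduce the statement to the classical five lemma for groups, applied level by level along a cofinal subsequence of indices. The key reformulation is that an inverse sequence $(X_m)$ is stable if and only if, after passing to a cofinal subsequence $m_1<m_2<\cdots$, there exists a single ``limit'' group $X_\infty$ together with homomorphisms $s_k\colon X_\infty \to X_{m_k}$ and $r_k\colon X_{m_k} \to X_\infty$ satisfying $r_k s_k = \mathrm{id}_{X_\infty}$ and $s_k r_{k+1} = p_{m_k}^{m_{k+1}}$; that is, the bonding maps cofinally factor through $X_\infty$ as section--retraction pairs forming a zigzag of commutative triangles, which is equivalent to a pro-isomorphism with the rudimentary system $(X_\infty)$.

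First I would pass to a common cofinal subsequence of $\mathbb{N}$ so that each of $(A_m)$, $(B_m)$, $(D_m)$, $(E_m)$ admits such section--retraction data simultaneously at every index, with respective limit groups $A_\infty$, $B_\infty$, $D_\infty$, $E_\infty$. This reduction uses only the standard fact that restriction to a cofinal subsequence is a pro-isomorphism and that finitely many cofinal subsets of $\mathbb{N}$ have cofinal intersection. After this reduction, the four retraction systems commute (up to equivalence) with the given level morphisms of the exact sequence, so that the limits fit into a candidate ``limit'' sequence $A_\infty \to B_\infty \to C_\infty \to D_\infty \to E_\infty$ among abelian groups, where the middle term is still to be constructed.

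Next I would construct $C_\infty$ together with its retraction data. A natural candidate is the common essential image $C_\infty := \bigcap_{n \ge k} \mathrm{im}(C_{m_n} \to C_{m_k})$ for large~$k$, or equivalently the inverse limit along the subsequence. Applying the classical five lemma at each level~$m_k$ to the ladder whose top row is $A_{m_k}\to B_{m_k}\to C_{m_k}\to D_{m_k}\to E_{m_k}$ and whose bottom row is the candidate limit sequence yields the retraction $r^C_k \colon C_{m_k} \to C_\infty$. A dual diagram chase, now using the sections $s_k$ of the four known sequences, yields the compatible section $s^C_k \colon C_\infty \to C_{m_k}$.

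The main obstacle is verifying the two identities $r^C_k s^C_k = \mathrm{id}_{C_\infty}$ and $s^C_k r^C_{k+1} = p_{m_k}^{m_{k+1}}$ that witness stability of $(C_m)$. Each follows from applying the classical five lemma to a composite ladder built out of the horizontal rows at levels $m_k$ and $m_{k+1}$, the vertical bonding maps of all five inverse sequences, and the four known section--retraction pairs; one uses the uniqueness clause of the five lemma, combined with the fact that the outer four identities already hold, to force the two desired identities in the middle column. Once these are established, $(C_m)$ admits the cofinal section--retraction structure, hence is pro-isomorphic to the rudimentary system $(C_\infty)$, and is therefore stable.
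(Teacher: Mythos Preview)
The paper does not prove this lemma; it simply attributes the result to Dydak and cites his paper.  So there is no ``paper's own proof'' to compare against---your proposal is an attempt to supply an argument where the paper gives only a reference.

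Unfortunately the proposal has a genuine gap.  Your key tool is the classical five lemma, but you invoke it in a way the five lemma does not support.  The five lemma takes as input a commutative ladder with exact rows and a \emph{given} middle vertical map, and concludes that this map is an isomorphism.  It does not \emph{produce} a map, and it has no ``uniqueness clause''.  Yet you write that applying the five lemma ``yields the retraction $r^C_k$'' and later that ``the uniqueness clause of the five lemma'' forces the identities $r^C_k s^C_k = \mathrm{id}$ and $s^C_k r^C_{k+1} = p_{m_k}^{m_{k+1}}$.  Neither step is justified: you have not constructed the middle vertical maps to which the five lemma could be applied, and there is no uniqueness statement to appeal to.

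There are further issues upstream.  You assert that the section--retraction data for $(A_m),(B_m),(D_m),(E_m)$ ``commute (up to equivalence) with the given level morphisms,'' but commutativity up to pro-equivalence is not enough to assemble an honest commutative ladder of groups at a fixed level $m_k$; you would need the squares to commute on the nose, and arranging this simultaneously for four sequences and four level morphisms requires work you have not done.  You also have not shown that the candidate limit row $A_\infty \to B_\infty \to C_\infty \to D_\infty \to E_\infty$ is exact, which would be a prerequisite for any five-lemma argument involving it.  A correct proof along these lines (roughly what Dydak does) proceeds instead via the characterization of stability in terms of images of bonding maps---showing the Mittag--Leffler condition and that the induced maps between stable images are isomorphisms---using diagram chases rather than the five lemma as a black box.
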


\begin{ques}
Does the conclusion of Lemma~\ref{lem:FiveLemma} hold in general for inverse systems over arbitrary index sets?
\end{ques}

For the purposes of this article, the conclusion of Lemma~\ref{lem:FiveLemma} is sufficient.  Indeed most inverse systems arising in geometric topology are pro-isomorphic to inverse sequences.  However the authors do not know of any obstruction to a five lemma for stability in the general category of pro-groups.  We note that the methods of Dydak's proof do not immediately extend to the general case.

Exactness of inverse systems is not always preserved when passing to the inverse limit
\cite{EilenbergSteenrod_Axioms,Milnor_Steenrod}. However inverse limits are better behaved if all inverse systems involved are stable, as illustrated by the following result.

\begin{lem}[Exactness of stable inverse limits]
\label{lem:lim1}
Given a long exact sequence of stable inverse systems of groups
\[
   \cdots \longrightarrow (A_\alpha) \longrightarrow (B_\alpha) \longrightarrow (C_\alpha) \longrightarrow \cdots
\]
the induced sequence of inverse limits
\[
   \cdots \longrightarrow \varprojlim(A_\alpha) \longrightarrow \varprojlim(B_\alpha) \longrightarrow \varprojlim(C_\alpha) \longrightarrow \cdots
\]
is also exact.
\end{lem}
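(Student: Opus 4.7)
\noindent The plan is to decompose the long exact sequence into short exact sequences of inverse systems via kernels, use Lemma~\ref{lem:FiveLemma} to propagate stability to these kernel subsystems, and then invoke the vanishing of $\varprojlim^1$ on stable systems to conclude that $\varprojlim$ preserves each resulting short exact sequence.

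Write the given long exact sequence as $\cdots \to (A^i_\alpha) \xrightarrow{f^i} (A^{i+1}_\alpha) \to \cdots$, and set $Z^i_\alpha = \ker f^i_\alpha \subseteq A^i_\alpha$. Because level morphisms commute with bonding maps, the subgroups $Z^i_\alpha$ inherit bonding maps, forming inverse systems $(Z^i_\alpha)$, and level-wise exactness of the original sequence yields short exact sequences of inverse systems
\[
   0 \longrightarrow (Z^i_\alpha) \longrightarrow (A^i_\alpha) \longrightarrow (Z^{i+1}_\alpha) \longrightarrow 0
\]
for each~$i$.

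The central step is to show each $(Z^i_\alpha)$ is stable. Applying Lemma~\ref{lem:FiveLemma} to the exact sequence of inverse sequences
\[
   (Z^i_\alpha) \longrightarrow (A^i_\alpha) \longrightarrow (Z^{i+1}_\alpha) \longrightarrow 0 \longrightarrow 0,
\]
stability of the middle term $(Z^{i+1}_\alpha)$ follows from stability of $(Z^i_\alpha)$ and $(A^i_\alpha)$; the symmetric five-term sequence $0 \to 0 \to (Z^i_\alpha) \to (A^i_\alpha) \to (Z^{i+1}_\alpha)$ propagates stability in the other direction. A base case for the induction is supplied by any natural termination of the sequence at $0$, as is the case for the long exact sequences of pairs arising in the applications of this lemma.

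Finally, stability of $(Z^i_\alpha)$ implies $\varprojlim^1 Z^i_\alpha = 0$, since this derived functor vanishes on rudimentary systems and depends only on the pro-isomorphism class. The standard six-term exact sequence associated to each short exact sequence above then degenerates to a short exact sequence of limits
\[
   0 \longrightarrow \varprojlim Z^i_\alpha \longrightarrow \varprojlim A^i_\alpha \longrightarrow \varprojlim Z^{i+1}_\alpha \longrightarrow 0,
\]
and splicing these across all $i$ recovers the desired long exact sequence of inverse limits. The principal obstacle is the propagation of stability to the kernel subsystems: the Five Lemma only propagates stability from already stable neighbors, so a base case must be identified. For bi-infinite sequences this can be delicate, but in the intended applications the long exact sequences terminate, and the induction goes through straightforwardly.
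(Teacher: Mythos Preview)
Your approach differs substantially from the paper's and, as you yourself flag, does not establish the lemma in the generality stated. The paper's proof is a two-line argument in the pro-category: since each $(A_\alpha)$ is stable, the canonical projection $\varprojlim(A_\alpha)\to(A_\alpha)$ (viewing the limit as a rudimentary pro-group) is a pro-isomorphism, so the exact sequence of pro-groups $(A_\alpha)\to(B_\alpha)\to(C_\alpha)$ is isomorphic in pro-groups to the sequence of rudimentary systems $\varprojlim(A_\alpha)\to\varprojlim(B_\alpha)\to\varprojlim(C_\alpha)$, and an exact sequence of rudimentary pro-groups is exact as groups. No induction, no $\varprojlim^1$, no termination hypothesis, and no restriction to sequences.

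Your argument has two genuine gaps relative to the stated lemma. First, Lemma~\ref{lem:FiveLemma} (Dydak's five lemma for stability) is only proved for inverse \emph{sequences}; the paper explicitly raises the question of whether it holds over arbitrary index sets and notes that the proof does not extend. Since Lemma~\ref{lem:lim1} is stated for general inverse systems, invoking Lemma~\ref{lem:FiveLemma} here is not justified. Second, your inductive propagation of stability to the kernel systems $(Z^i_\alpha)$ requires a base case, which you supply only by assuming the long exact sequence terminates. The lemma does not assume this, and for a genuinely bi-infinite sequence of stable systems your induction never starts. You acknowledge both issues, but they mean you have proved a strictly weaker statement than the one claimed. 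The paper's pro-categorical argument sidesteps both difficulties simultaneously; indeed, the paragraph following the proof remarks that the $\varprojlim^1$ route you take is a known alternative in the sequence case, but the authors deliberately chose the more general argument.
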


\begin{proof}
Consider the induced commutative diagram in the category of pro-groups
\[
\xymatrix{
   \cdots \ar[r] & \varprojlim(A_\alpha) \ar[r] \ar[d] & \varprojlim(B_\alpha) \ar[r] \ar[d] & \varprojlim(C_\alpha) \ar[r] \ar[d] & \cdots   \\
   \cdots \ar[r] & (A_\alpha) \ar[r] & (B_\alpha) \ar[r] & (C_\alpha) \ar[r] & \cdots
}
\]
in which the groups in the first row are considered to be rudimentary pro-groups, and the vertical maps are canonical projections.
Since the inverse systems in the second row are stable, the vertical projections are isomorphisms of pro-groups (\cite{MardesicSegal82}, \S I.5.2, Thm.~2).
Therefore the first row of rudimentary pro-groups is an exact sequence. (See \cite[\S II.2.3]{MardesicSegal82} for more information about exactness in the nonabelian category of pro-groups.)
Note that an exact sequence of rudimentary pro-groups is also exact when considered as a sequence of groups, completing the proof.
\end{proof}

In the setting of inverse sequences (\emph{i.e.}, systems indexed by the natural numbers) exactness is also preserved under the more general hypothesis that the first derived limit $\lim^1$ vanishes for every term in the given exact sequence \cite{Milnor_Steenrod,dydak,geoghegan}.  We will not need this more general result here.

\section{Coarse geometry}
\label{sec:CoarseGeometry}

In this section we review concepts in coarse geometry.  We refer the reader to \cite{Roe_CoarseGeometry} for more details and references.

\begin{defn}
A map $f\colon Y \to X$ between metric spaces is a \emph{coarse embedding} if there exist unbounded increasing functions $\rho_-,\rho_+ \colon \R_{\ge 0} \to \R_{\ge 0}$ such that
\[
   \rho_- \bigl( d(y_1,y_2) \bigr)
   \le d \bigl( f(y_1),f(y_2) \bigr)
   \le \rho_+ \bigl( d(y_1,y_2) \bigr)
   \qquad \text{for all $y_1,y_2 \in Y$.}
\]
The functions $\rho_-$ and $\rho_+$ are the \emph{lower control} and \emph{upper control} functions.
We note that if $Y$ is a length space, then one can always choose the upper control to be an affine function; \emph{i.e.}, every coarse embedding of a length space is coarse Lipschitz (see \cite[Lemma~1.10]{Roe_CoarseGeometry}).
\end{defn}

\begin{defn}
A \emph{metric simplicial complex} is a connected, locally finite simplicial complex in which each simplex is isometric to the standard Euclidean simplex with edges of length one. A metric simplicial complex $X$ has \emph{bounded geometry} if the vertex links of $X$ have a uniformly bounded number of simplicies. If $K \subset X$ is a subcomplex and $R$ is a positive integer, then $N_R(K)$ is the combinatorial $R$--tubular neighborhood of $K$ obtained by taking the $R$--fold iterated closed star of $K$. 
\end{defn}

\begin{exmp}
Suppose a finitely generated group $G$ acts properly by simplicial automorphisms on a bounded geometry metric simplicial complex $X$.  Endow $G$ with the word metric for any finite generating set.  Then the orbit map $G \to X$ given by $g \mapsto g(x_0)$ is a coarse embedding for any choice of basepoint $x_0 \in X$.
\end{exmp}

Throughout this article all chain complexes and homology groups are considered in the simplicial category and all coefficients are integral.  
A chain complex $\mathcal{C}=\{C_i,\boundary_i\}$ may be considered as a single abelian group equipped with a grading $\mathcal{C}= \bigoplus C_i$.
The notations $C_{\circ}(X) = \bigl\{ C_i(X),\boundary_i\bigr\}$ and $C_c^\circ(X) = \bigl\{ C_c^i(X),\delta^i\}$ denote respectively the chain complex of simplicial chains and the cochain complex of compactly supported simplicial cochains on $X$, each with integer coefficients.

\begin{defn}
If $X$ is a simplicial complex and $A$ is any subset of $C_\circ(X)$, then the \emph{support} of $A$ is the smallest subcomplex $K$ of $X$ so that $A \subset C_\circ(K)$.
A homomorphism $h\colon C_i(X) \to C_c^j(X)$ has \emph{displacement} at most $D$ for some positive integer $D$ if for every simplex $\sigma$ of $X$, the cochain $h(\sigma)$ is supported in $N_D(\sigma)$.
A homomorphism $C_c^j(X) \to C_i(X)$ has \emph{displacement} at most $D$ if for every simplex $\sigma$ of $X$, the image of the composition $C_c^j(\bar\sigma) \to C_c^j(X) \to C_i(X)$ has support lying in $N_D(\sigma)$, where the map $C_c^j(\bar\sigma) \to C_c^j(X)$ is given by extending a cochain on the subcomplex $\bar\sigma$ to a cochain on $X$ by setting it to evaluate to zero on simplices of $X$ outside of $\bar\sigma$.
\end{defn}

\begin{defn}
\label{defn:acyclic}
An $n$--dimensional metric simplicial complex $X$ is \emph{uniformly $k$--acyclic} for some $k \leq n$ if for every $R_1 >0$ there exists $R_2 >0$ so that for each subcomplex $K \subset X$ of diameter at most $R_1$ the inclusion $K \rightarrow N_{R_2}(K)$ induces zero on $\tilde{H}_k$.
The complex $X$ is \emph{uniformly acyclic} if $X$ is uniformly $k$--acyclic for all $k \leq n$.

By the Mayer--Vietoris Theorem, if $X$ is a union of two subcomplexes $A$ and $B$ such that $A$, $B$, and $A\cap B$ are uniformly acyclic, then so is $X$.
\end{defn}

\begin{defn}
Let $Y$ and $X$ be finite dimensional metric simplicial complexes. A chain map $h\colon C_\circ(Y) \rightarrow C_\circ(X)$ or a chain homotopy $h\colon C_\circ (Y) \to C_{\circ + 1}(X)$ is
\emph{$D$--controlled} for a constant $D<\infty$ if for each simplex $\sigma$ of $Y$, the support of $h\bigl( C_\circ(\sigma) \bigr)$ has diameter at most $D$.
A chain map $h$ is a \emph{coarse embedding} if $h$ is $D$--controlled for some $D$ and there exists a proper function $\phi\colon\R_+ \rightarrow \R_+$ such that for each subcomplex $K \subset Y$ of diameter at least $r$, the support of $h\bigl( C_\circ(K)\bigr)$ has diameter at least $\phi(r)$. 
Similar notions for cochain maps and cochain homotopies are defined analogously.
\end{defn}

\begin{rem}
\label{rem:CoarseEmbeddingSubcomplex}
Suppose $Z'$ is a subcomplex of $Z$ such that the inclusion $Z' \hookrightarrow Z$ is a coarse embedding.
It follows immediately from the definitions that the induced chain map of simplicial chain complexes $C_\circ(Z') \to C_\circ(Z)$ is a coarse embedding.
\end{rem}

Coarse embeddings of spaces do not need to be continuous.  However, if the spaces are finite dimensional uniformly acyclic metric simplicial complexes, then a coarse embedding induces a chain map that is also a coarse embedding in the following sense.

\begin{defn}
\label{def:Approximate}
Given a coarse embedding of metric simplicial complexes  $f\colon Z \to X$, a chain map $f_\# \colon C_\circ(Z) \to C_\circ(X)$ of simplicial chain complexes \emph{$M$--approximates} $f$ for some number $M$ if for each simplex $\sigma^i$ of $Z$, the support of the chain $f_\#(\sigma^i)$ is within Hausdorff distance $M$ of the image $f(\sigma^i)$.
We say that $f_\#$ \emph{approximates} $f$ if it $M$--approximates for some $M$.
Note that an approximating chain map $f_\#$ of a coarse embedding is again a coarse embedding.
\end{defn}

\begin{lem}
\label{lem:ChainCoarseEmbedding}
Assume $Z$ and $X$ are metric simplicial complexes such that $X$ is uniformly acyclic and $Z$ is finite dimensional. For each coarse embedding $f\colon Z\to X$ there exists a chain map $f_\# \colon C_\circ(Z) \to C_\circ(X)$ approximating $f$.

Although $f_\#$ is not uniquely determined by $f$, it is well-defined in a weaker sense:  Let $f\colon Z\to X$ be a coarse embedding, and let $f_\#$ and $f'_\#$ be two chain maps that $M'$--approximate $f$. There exists a constant $D=D(Z,X,M',f)$ such that $f_\#$ and $f'_\#$ are chain homotopic by a $D$--controlled chain homotopy.
\end{lem}

\begin{proof}
We construct $f_\#$ on $C_\circ(Z) = \bigoplus_{i\ge 0} C_i(Z)$ by induction on the dimension $i$.
In dimension $0$, define $f_\#$ by arbitrarily choosing for each $0$--simplex $\sigma^0$ of $Z$ a $0$--simplex of $X$ within a distance $1$ of the point $f(\sigma^0)$.
In dimension $1$, define $f_\#$ using the upper control of $f$ (the boundary of a $1$--simplex has image in $X$ with a uniformly bounded diameter) and the uniform acyclicity of $X$ (a pair of points separated by a uniformly bounded distance is the boundary of a $1$--chain with uniformly bounded diameter).
In dimension $k>1$, define $f_\#$ inductively, since the boundary of a $k$--simplex is a uniformly bounded diameter ${(k-1)}$--cycle in $X$ by hypothesis, so it is the boundary of a uniformly bounded diameter $k$--chain by uniformly acyclic.
The uniform bounds obtained in this inductive argument do depend on the dimension $k$, but by hypothesis $Z$ is finite dimensional, so there exists an upper bound independent of dimension.

To see that the chain map obtained is an approximation in the desired sense, first note that each simplex $\sigma^k$ of $Z$ has diameter $1$, so its image $f(\sigma^k)$ has uniformly bounded diameter in $X$.  In particular, the image $f(\sigma^k)$ lies in a uniformly bounded neighborhood of the image of the $0$--skeleton of $\sigma^k$, which lies uniformly close to the support of the chain $f_\#(\sigma^k)$.

The construction of the controlled chain homotopy involves a similar induction on dimension.  The base of the induction relies on the fact that any two approximating chain maps are uniformly close when restricted to the $0$--skeleton because they are both uniformly close to $f$.
The induction follows as before using that $X$ is uniformly acyclic and $Z$ is finite dimensional.
\end{proof}

The previous lemma implies that coarse embeddings of metric simplicial complexes (as above) induce well-defined maps on homology and on cohomology with compact supports as explained in the following corollary. 

\begin{cor}
\label{cor:InducedByApproximating}
Let $f\colon Z \to X$ be a coarse embedding of metric simplicial complexes, where $Z$ is finite dimensional and $X$ is uniformly acyclic.  Any two chain maps approximating $f$ induce the same homomorphism in homology and also in cohomology with compact supports \textup{(}with integer coefficients\textup{)}.
\end{cor}

\begin{proof}
The claim regarding homology is immediate since two chain homotopic maps of chain complexes induce the same map on homology.

Simplicial cohomology with compact supports makes sense only when the spaces involved are locally finite.  Indeed local finiteness implies that the dual of the boundary map takes compactly supported cochains to compactly supported cochains.
Because an approximating chain map $f_\# \colon C_\circ(Z) \to C_\circ(X)$ is a coarse embedding, its dual $f^\#$ also takes compactly supported cochains to compactly supported cochains.  Since the dual of a chain map is a cochain map, each approximating chain map $f_\#$ induces a homomorphism on cohomology with compact supports.
Since the dual of a chain homotopy is a cochain homotopy, any two approximating chain maps $f_\#$ and $g_\#$ induce the same maps on cohomology with compact supports.
\end{proof}

\section{\v{C}ech homology of filtered ends}
\label{sec:DeepHomology}
The study of ends of a space $X$ largely involves properties of the inverse system of subspaces $X-K$ indexed by the family of bounded subsets $K \subset X$.  More generally, if $K$ is an unbounded subspace, one can study the analogous filtration of $X$ by metric neighborhoods of $K$.
For each subcomplex $K$ of a metric simplicial complex $X$, the family of simplicial neighborhoods $N_R(K)$ of $K$ is a filtration of $X$ in the sense of Geoghegan \cite[Chap.~14]{geoghegan}.  This inverse sequence is unchanged up to pro-isomorphism if one replaces $K$ with a subset $K'$ at finite Hausdorff distance from $K$.  In order to see this equivalence it is more convenient to work with a larger inverse system, as described below.
We note that the following definition may be considered as a special case of the general notion of homology of filtered ends of spaces discussed by Geoghegan.

\begin{defn}[\v{C}ech homology of filtered ends]
\label{defn:CechHomology}
Let $X$ be a metric simplicial complex.  For each subset $K$ of $X$, a \emph{neighborhood $L$ of $K$} is a subcomplex of $X$ such that $L$ is contained in a metric tubular neighborhood $N_R(K)$ of $K$ for some $R<\infty$.
The family $\mathcal{N}(K)$ of all neighborhoods of $K$ is directed by inclusion: $L\le L'$ if $L \subseteq L'$.
The $i$th \emph{homology pro-group of the $K$--end} is a pro-group associated to the end of the filtration $\mathcal{N}(K)$ and is defined by the inverse system
\[
   \text{pro-}H_i^D(K^c) = \bigl( H_i (\overline{X-L}) \bigr)
\]
indexed by $L \in \mathcal{N}(K)$.
Taking inverse limits we get the $i$th \emph{\v{C}ech homology of the $K$--end},
\[
   \check{H}_i^D(K^c) = \varprojlim\limits_L \bigl( H_i (\overline{X-L}) \bigr).
\]

Notice that by definition $\text{pro-}H_i^D(K_1^c) = \text{pro-}H_i^D(K_2^c)$ for any subsets $K_1$ and $K_2$ at a finite Hausdorff distance from each other, since $\mathcal{N}(K_1) = \mathcal{N}(K_2)$.  In the special case that $K$ is a bounded subset, the $i$th homology pro-group of the $K$--end is also known as the $i$th \emph{homology pro-group at infinity} of $X$ and does not depend on the choice of bounded set $K$.  Its inverse limit is the $i$th \emph{\v{C}ech homology group at infinity} of $X$.

If $K$ is the empty set, then by definition $\mathcal{N}(K)$ is also empty. In this degenerate case, we note that $\text{pro-}H_i^D(\emptyset^c)$ reduces to $H_i(X)$.
\end{defn}

Under certain conditions, a map of pairs $(A,B) \to (X,Y)$ induces a corresponding map $\check{H}_i^D(B^c) \to \check{H}_i^D(Y^c)$ on \v{C}ech homology of filtered ends.
Rather than developing the most general case, we focus on the following simple setup.
Consider a coarse embdedding of metric simplicial complexes $f\colon A \rightarrow X$, and let $B$ be a subcomplex of $A$ with image $f(B)$.
Roughly speaking, any such $f$ induces a homomorphism from \v{C}ech homology of the $B$--end of $A$ to the \v{C}ech homology of the $f(B)$--end of $X$, provided that we first choose a chain map that approximates $f$.

\begin{prop}
\label{ap}
Let $f\colon A \rightarrow X$ be a coarse embedding of metric simplicial complexes, where $A$ is finite dimensional and $X$ is uniformly acyclic. Let $B$ be any subcomplex of $A$.
Then there exists a unique homomorphism 
\[
   \check{H}_i^D(B^c) \xrightarrow[\qquad]{\check{f}} \check{H}_i^D\bigl( (fB)^c \bigr)
\]
induced by $f$ in the following sense.  Every chain map
\[
   C_\circ(A) \xrightarrow[\qquad]{f_\#} C_\circ(X)
\]
approximating $f$ induces \textup{(}by restriction to subspaces\textup{)} a morphism between the inverse systems defining the \v{C}ech homology groups.  This morphism induces a homomorphism $\check{f}$ of inverse limits that does not depend on any of the choices made in its construction.
\end{prop}

\begin{proof}
We first show that any approximating chain map $f_\#$ induces a map on deep homology via restrictions.
Suppose $f_\#$ is a chain map approximating $f$, and let $K$ be the support of $f_\#\bigl( C_\circ(B) \bigr)$.
By construction, the Hausdorff distance between $K$ and $f(B)$ is finite, for each simplex $\sigma^k$ of $A$ the support of the chain $f_\#(\sigma^k)$ is within a uniformly bounded distance of the image $f(\sigma^k)$, and $f$ is a coarse embedding.
Therefore for each $R$ there is a constant $M$ such that whenever $L \subseteq N_R\bigl( f(B) \bigr)$ we have
\begin{equation}
\label{eqn:RelativeEnds}
   f_\#\bigl( C_\circ(\overline{A-N_{M}(B)}) \bigr) \subseteq C_\circ \bigl( \overline{X-N_{R}(f(B))} \bigr)
   \subseteq C_\circ \bigl( \overline{X-L} \bigr).
\end{equation}
In particular $f_\#$ induces, via restriction, a homomorphism
\[
   H_i \bigl( \overline{A - N_{M}(B)} \bigr)
   \xrightarrow[\qquad]{f_L}
   H_i \bigl( \overline{X - L} \bigr).
\]
For concreteness, for each $L$ we choose $R=R(L)$ and $M=M(L)$ to be the minimal natural numbers satisfying \eqref{eqn:RelativeEnds}, so that the mapping $L\mapsto M(L)$ is nondecreasing.
Since at the level of chain maps the family of such restrictions commutes with inclusions, the maps $f_L$ define a morphism of inverse systems, which induces a corresponding homomorphism $\check{f}\colon \check{H}_i^D(B^c)\to \check{H}_i^D(K^c)$.

Now suppose $f_\#$ and $f'_\#$ are two chain maps that approximate $f$.  We will show that the induced maps $\check{f}$ and $\check{f}'$ constructed as above are equal.
Recall that by Lemma~\ref{lem:ChainCoarseEmbedding} the chain maps $f_\#$ and $f'_\#$ are chain homotopic by a $D$--controlled chain homotopy $F$ for some $D$ depending on $f_\#$ and $f'_\#$.
Since $f_\#$ is a coarse embedding, the chain homotopy $F$ is also a coarse embedding.
Suppose $L \in \mathcal{N}\bigl(f(B)\bigr)$, and choose $M'$ sufficiently large that the chain homotopy $F$ restricts to a map
\[
   C_\circ (\overline{A-N_{M'}(B)}) \longrightarrow
   C_{\circ+1} \bigl( \overline{X-L} \bigr).
\]
Then the two homology maps
\[
   H_i (\overline{A-N_{M'}(B)}) \longrightarrow
   H_i \bigl( \overline{X-L} \bigr)
\]
induced by restriction of $f_\#$ and $f'_\#$ are equal.  It follows that the induced maps
$\check{H}_i^D(B^c)\to \check{H}_i^D(K^c)$ are equal as well.
\end{proof}

\begin{rem}
Throughout this section we have discussed the ordinary \v{C}ech homology of filtered ends.  The analogous reduced \v{C}ech homology groups are defined similarly.  We omit the details, which are essentially the same as in the ordinary case.
\end{rem}

\subsection{Relative homology of filtered ends} \label{sec:rel_homology}

This section discusses a relative version of \v{C}ech homology of filtered ends.
In general \v{C}ech homology theories do not satisfy the exactness axiom of Eilenberg--Steenrod.
However in certain well-behaved situations exactness still holds.  One such natural situation is when $X$ is a coarse Poincar\'{e} duality space, as discussed in the following definition.

\begin{defn}
\label{def:coarsePDn}
A \emph{coarse $PD(n)$ space} $X$ is a bounded geometry, uniformly acyclic, metric simplicial complex equipped with cochain maps
\[
   C_{n-\circ}(X) \xrightarrow[\qquad]{\bar{P}} C_c^{\circ}(X) \xrightarrow[\qquad]{P} C_{n-\circ}(X)
\]
such that for some integer constant $E_0$ we have
\begin{enumerate}
    \item $P$ and $\bar{P}$ have displacement at most $E_0$.
    \item $\bar{P} \of P$ and $P\of \bar{P}$ are cochain homotopic to the identity by $E_0$--controlled cochain homotopies.
\end{enumerate}
\end{defn}

\begin{defn}[Relative \v{C}ech homology]
Let $X$ be a metric simplicial complex and let $K_1\subset K_0$ be subsets of $X$.
A \emph{neighborhood} of the pair $(K_0,K_1)$ is a pair of subcomplexes $(L_0,L_1)$ of $X$ with $L_1 \subseteq L_0$ such that there exists $R$ with $L_i \subseteq N_R(K_i)$.
As in the absolute case, the family $\mathcal{N}(K_0,K_1)$ of all neighborhoods of $(K_0,K_1)$ is directed by inclusion, and we define the $i$th \emph{homology pro-group of the $(K_0,K_1)$--end}, 
\[
   \text{pro-}H_i^D(K_1^c,K_0^c) = \bigl( H_i (\overline{X-L_1}, \overline{X-L_0}) \bigr)
\]
indexed by $(L_0,L_1) \in \mathcal{N}(K_0,K_1)$.
Taking inverse limits we get the corresponding $i$th \emph{\v{C}ech homology group of the $(K_0,K_1)$--end},
\[
   \check{H}_i^D(K_1^c,K_0^c) = \varprojlim\limits \bigl( H_i (\overline{X-L_1}, \overline{X-L_0}) \bigr).
\]

Suppose $X$ is held fixed.  An inclusion $(C,D) \to (A,B)$ of pairs of subsets induces a morphism of homology pro-groups:
\[
   \text{pro-}H_i^D(B^c,A^c)
   \longrightarrow
   \text{pro-}H_i^D(D^c,C^c).
\]
Thus the relative homology pro-group of filtered ends may be considered as a contravariant functor on the pairs of subsets of $X$.
\end{defn}

When $X$ is a coarse $PD(n)$ space, one may identify the homology pro-group associated to the pair $(K,\emptyset)$ with the reduced homology pro-group associated to $K$ as defined in Definition~\ref{defn:CechHomology}.
Indeed a neighborhood of $(K,\emptyset)$ has the form $(L,\emptyset)$ for some neighborhood $L$ of $K$.
Since $X$ is acyclic, the connecting homomorphism $\boundary$ of the reduced homology long exact sequence of the pair $(X,X-L)$ is an isomorphism in every dimension:
\[
   H_i({X}, \overline{X-L})
   \xrightarrow[\qquad]{\boundary}
   \widetilde{H}_{i-1} (\overline{X-L})
\]

    \subsection{Stable deep components} \label{sec_stable_deep_comps}

We describe what the $0$th \v{C}ech homology of the $K$--end represents in terms of a natural basis that we use later. 

\begin{defn}
Let $X$ be a metric simplicial complex, and let $K$ be a subset of $X$.
The set of \emph{ends of the pair} $(X,K)$ or \emph{relative ends} is the set
\[
   \mathcal{E}(X,K)
   = \varprojlim \pi_0 \bigl( \overline{X - L} \bigr).
\]
indexed by the directed system of all subcomplexes $L \in \mathcal{N}(K)$.
\end{defn}

A component $U \in \pi_0 \bigl( \overline{X - L} \bigr)$ is {\it deep} if it is in the image of the natural projection $\mathcal{E}(X,K) \to \pi_0 \bigl( \overline{X - L} \bigr)$, or equivalently if $U$ is not contained in a bounded neighborhood of $K$.
A deep component of $\overline{X - L}$ is \emph{stable} if it intersects exactly one deep component of $\overline{X - L'}$ for each $L'\in \mathcal{N}(K)$ with $L\subseteq L'$.

Note that if $U$ is a stable deep component of $\overline{X - L}$, then there exists a unique end of the pair $(X,K)$ mapping to $U$ under the natural projection.

The relation between stable deep components and the $0$th \v{C}ech homology of the $K$--end is summarized in the following result.

\begin{prop}[\cite{geoghegan}, \S14.3] \label{prop_H0_gens}
Suppose the inverse system $\{ H_0(\overline{X - L})\}_{L \in \mathcal{N}(K)}$ is stable and its inverse limit is free abelian of finite rank.
Then the set of relative ends $\mathcal{E}(X,K)$ is finite and freely generates $\check{H}_0^D(K^c)$.
Furthermore there exists $R_0<\infty$ such that whenever $L \in \mathcal{N}(K)$ satisfies $N_{R_0}(K) \subseteq L$ all deep components $U \in \pi_0 \bigl( \overline{X - L} \bigr)$ are stable, and the natural projection
\[
   \mathcal{E}(X,K) \to \pi_0\bigl( \overline{X - L} \bigr)
\]
is a bijection onto the set of deep components of $\overline{X - L}$.

In addition,  for each $L \in \mathcal{N}(K)$ there exists $L' \in \mathcal{N}(K)$ such that any component $U \in \pi_0\bigl( \overline{X - L} \bigr)$ that is not deep is contained in $L'$.
\end{prop}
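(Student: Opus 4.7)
The plan is to build a natural map $\mathcal{E}(X,K)\to\check{H}_0^D(K^c)$ sending each relative end $e$ to $\hat{e}=\bigl([e(L)]\bigr)_L$, where $e(L)$ is the component of $\overline{X-L}$ determined by $e$, and then to show that the image forms a basis. I would proceed in three steps: linear independence of the $\hat{e}$'s, a stability argument identifying $\mathcal{E}(X,K)$ with the deep components once $L$ is large enough, and generation.

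For linear independence, suppose $e_1,\dots,e_k$ are pairwise distinct relative ends. For each pair $i\ne j$ there is $L_{ij}\in\mathcal{N}(K)$ at which $e_i$ and $e_j$ project to distinct components, and since $\mathcal{N}(K)$ is directed by inclusion (the union of two neighborhoods is again a neighborhood), we may choose a single $L^*$ dominating all $L_{ij}$, so that $e_1(L^*),\dots,e_k(L^*)$ are pairwise distinct. These are distinct basis elements of the free abelian group $H_0(\overline{X-L^*})$, so a relation $\sum n_i\hat{e}_i=0$ in the inverse limit projects to $\sum n_i[e_i(L^*)]=0$, forcing all $n_i=0$. Thus the rank of $\check{H}_0^D(K^c)$ is at least $\lvert\mathcal{E}(X,K)\rvert$, and the finite-rank hypothesis makes $\mathcal{E}(X,K)$ finite.

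For stability, write $D(L)$ for the set of deep components of $\overline{X-L}$. Since every deep component is the image of some relative end, the bonding maps restrict to surjections $D(L')\twoheadrightarrow D(L)$ whenever $L\subseteq L'$, and the surjection $\mathcal{E}(X,K)\twoheadrightarrow D(L)$ gives $\lvert D(L)\rvert\le\lvert\mathcal{E}(X,K)\rvert$. Hence $\lvert D(L)\rvert$ attains a maximum at some $L_0\in\mathcal{N}(K)$, and for every $L\supseteq L_0$ the surjection $D(L)\twoheadrightarrow D(L_0)$ is a bijection of finite sets of equal cardinality. Combined with surjectivity of $\mathcal{E}(X,K)\to D(L)$, this yields a bijection $\mathcal{E}(X,K)\xrightarrow{\sim}D(L)$ for all $L\supseteq L_0$, and the same argument applied to pairs $L_0\subseteq L\subseteq L'$ shows that each deep component of $\overline{X-L}$ contains a unique deep component of $\overline{X-L'}$, establishing stability. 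Choosing $R_0$ with $L_0\subseteq N_{R_0}(K)$ converts the condition $L\supseteq L_0$ into $L\supseteq N_{R_0}(K)$.

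Generation is the step I expect to require the most careful bookkeeping. Given $x=(x_L)\in\check{H}_0^D(K^c)$, compatibility with bonding maps forces the coefficient of a component $[U]$ in $x_L$ to equal the sum of coefficients in $x_{L'}$ of all components of $\overline{X-L'}$ contained in $U$. For a shallow component $U\subseteq N_R(K)$, choosing $L'\in\mathcal{N}(K)$ with $N_R(K)\subseteq L'$ leaves no such subcomponents in $U$, forcing the coefficient to vanish; hence $x_L$ is supported on deep components. For $L\supseteq L_0$ we may therefore write $x_L=\sum_e n_{e,L}[e(L)]$ using the bijection with $\mathcal{E}(X,K)$, and compatibility under $L\subseteq L'$ forces $n_{e,L}=n_{e,L'}=:n_e$. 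The equality $x=\sum_e n_e\hat{e}$ then extends to every $L\in\mathcal{N}(K)$ by routing through a neighborhood containing both $L$ and $L_0$, which exists by directedness.
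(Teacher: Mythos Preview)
The paper does not give its own proof of this proposition; it simply cites Geoghegan \cite[\S14.3]{geoghegan}. Your argument is a correct self-contained proof, and the three-step structure (linear independence to bound $\lvert\mathcal{E}(X,K)\rvert$, a pigeonhole stabilization of the inverse system of deep components, and a support argument for generation) is exactly the kind of direct verification one would extract from that reference. A couple of minor remarks: in the ``shallow coefficients vanish'' step you should note that a component $V$ of $\overline{X-L'}$ necessarily contains a simplex whose interior lies in $X-L'$, which is what rules out $V\subseteq U\subseteq N_R(K)\subseteq L'$; and in the extension to arbitrary $L$ at the end, the bonding map from $L'\supseteq L\cup L_0$ already forces $x_L$ to be supported on deep components, so you need not invoke the shallow-coefficient argument separately there. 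Otherwise the proof is complete.
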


\section{Coarse Alexander duality}
\label{sec:CoarseAlexanderDuality}

In this section, we review the coarse Alexander duality isomorphism theorem of Kapovich--Kleiner \cite{kapovichkleiner05}.

Suppose $X$ is a coarse $PD(n)$ space and $Z$ is a uniformly acyclic metric simplicial complex with bounded geometry. Let $f \colon Z\to X$ be a coarse embedding.
For each $k$, coarse Alexander duality is a pro-group morphism
\[
   \text{pro-}\widetilde{{H}}{}_{n-k-1}^D(fZ^c) \xrightarrow[\qquad]{A}
   H_c^k(Z),
\]
defined as follows.

Although by definition the homology pro-group is indexed by the family of subcomplexes $\mathcal{N}(fZ)$, it suffices to restrict our attention to the cofinal sequence of subcomplexes $\bigset{N_R(K)}{R=1,2,3,\dots}$, where $K$ is any fixed subcomplex at a finite Hausdorff distance from the set $fZ$.  Applying this restriction, we may consider the homology pro-group to be an inverse sequence indexed by the natural number $R$.

Fix a subcomplex $K \in \mathcal{N}(fZ)$ at a finite Hausdorff distance from $fZ$, and let
\[
   Y^R = \overline{X - N_R(K)}.
\]
Because the cochain map $\bar{P}$ given by Definition~\ref{def:coarsePDn} has bounded displacement, it restricts to a cochain map
\[
   C_{n-\circ}(Y^{R+E})
   \xrightarrow[\qquad]{\bar{P}}
   C_c^\circ \bigl( X,N_R (K) \bigr),
\]
where $E=E_0+1$ and $E_0$ is the constant from Definition~\ref{def:coarsePDn}.
Indeed $\bar{P}$ maps each chain in $Y^{R+E}$ to a cochain supported in $N_{E_0}(Y^{R+E}) = N_{E_0}(Y^{R+1+E_0})$, which lies in $\overline{X- N_{R+1}(K)}$.  Such a cochain evaluates to zero on every simplex of $N_{R}(K)$, so it lies in the given relative cochain complex.
Therefore $\bar{P}$ induces a cochain map, also denoted by $\bar{P}$, between the two quotient cochain complexes in the following commutative diagram of cochain maps:
\[
\xymatrix{
   0 \ar[r] & C_{n-\circ}(Y^{R+E}) \ar[r] \ar[d]^{\bar{P}} & C_{n-\circ}(X) \ar[r] \ar[d]^{\bar{P}} & C_{n-\circ}(X,Y^{R+E}) \ar[r] \ar[d]^{\bar{P}} & 0 \\
   0 \ar[r] & C_c^\circ \bigl(X,N_R(K) \bigr) \ar[r] & C_c^\circ (X) \ar[r] & C_c^\circ \bigl( N_R(K) \bigr) \ar[r] & 0
}
\]

Choose an approximating chain map $f_\# \colon C_\circ(Z) \to C_\circ(X)$.  
We now add the assumption that $K$ has been chosen large enough so as to contain the support of $f_\#\bigl( C_\circ(Z) \bigr)$.
This assumption is sufficient for defining a morphism of pro-groups, since the collection of sets $\bigl\{N_R(K)\bigr\}$ is cofinal in $\mathcal{N}(fZ)$.
Thus we may replace $K$ with its $R_0$--neighborhood for a large enough value of $R_0$.  In particular, the chain map $f_\#$ restricts to a chain map
\[
   C_\circ(Z) \xrightarrow[\qquad]{f_\#} C_\circ \bigl( N_R(K) \bigr).
\]
As discussed in Corollary~\ref{cor:InducedByApproximating}, a chain map that is a coarse embedding induces a dual cochain map of compactly supported cochain complexes.  In the present setting, we get an induced cochain map
\[
   C_c^\circ \bigl( N_R(K) \bigr)
   \xrightarrow[\qquad]{f^\#}
   C_c^\circ (Z).
\]

The connecting homomorphism $\boundary$ of the reduced homology long exact sequence of the pair $(X,Y^{R+E})$ is an isomorphism in every dimension because $X$ is acyclic.
Consider, for each fixed $k$, the map
$A^{R+E} \colon \widetilde{H}_{n-k-1}(Y^{R+E}) \to H_c^k (Z)$ defined by the following composition of homomorphisms:
\[
\xymatrix{
   \widetilde{H}_{n-k-1}(Y^{R+E})
   \ar[r]^{\boundary^{-1}} &
   H_{n-k} (X,Y^{R+E})
   \ar[r]^{\ \ \bar{P}} &
   H_c^k \bigl( N_R(K) \bigr)
   \ar[r]^{\ \ \ \ f^*} &
   H_c^k (Z)
}
\]
in which $\boundary^{-1}$ denotes the inverse of the connecting homomorphism $\boundary$, and $\bar{P}$ and $f^*$ are the induced maps on homology/cohomology discussed above.
When $R<R'$ consider the following diagram in which the vertical maps are induced by inclusion:
\[
\xymatrix{
   \widetilde{H}_{n-k-1}(Y^{R'+E})
   \ar[r]^{\boundary^{-1}} \ar[d] &
   H_{n-k} (X,Y^{R'+E})
   \ar[r]^{\ \ \bar{P}} \ar[d] &
   H_c^k \bigl( N_{R'}(K) \bigr)
   \ar[r]^{\ \ \ \ f^*} \ar[d] &
   H_c^k (Z) \ar[d]^{id} \\
   \widetilde{H}_{n-k-1}(Y^{R+E})
   \ar[r]^{\boundary^{-1}} &
   H_{n-k} (X,Y^{R+E})
   \ar[r]^{\ \ \bar{P}} &
   H_c^k \bigl( N_R(K) \bigr)
   \ar[r]^{\ \ \ \ f^*} &
   H_c^k (Z)
}
\]
Observe that the diagram above commutes.  Indeed, the leftmost square commutes by naturality of the connecting homomorphism $\boundary$.  The middle square commutes at the level of cochain complexes, since the two horizontal maps labelled $\bar{P}$ are induced by the same map $\bar{P}\colon C_{n-\circ}(X)\to C_c^\circ(X)$.  Similarly the righthand square commutes at the level of cochains by the definition of $f^\#$.

Recall that a morphism of inverse systems with rudimentary target is determined by a single homomorphism of groups.
In particular, for each $R>E$ the map $A^R$ determines a morphism
\[
   \bigl(\widetilde{H}_{n-k-1}(Y^{R}) \bigr)_R
   \xrightarrow[\qquad]{A^{R}}
   \bigl( H_c^k(Z) \bigr)
\]
of inverse systems.
The argument in the previous paragraph implies that the morphisms $A^R$ and $A^{R'}$ are equivalent for all $R,R'>E$.
The equivalence class $A = [A^R]$ is a morphism of pro-groups, known as \emph{coarse Alexander duality}.

Kapovich--Kleiner have shown the following coarse Alexander duality isomorphism theorem.

\begin{thm}[\cite{kapovichkleiner05}, Theorem~7.7]
\label{thm:CAD}
Suppose $Z$ is a uniformly acyclic metric simplicial complex with bounded geometry, and suppose $X$ is a coarse $PD(n)$ space.
Let $f \colon Z\to X$ be a coarse embedding.
Then the coarse Alexander duality morphism
\[
   \textup{pro-}\widetilde{H}^D_{n-k-1} ( fZ^c )_R \xrightarrow[\qquad]{A}
    H_c^k(Z)
\]
is a pro-isomorphism.
In particular, the reduced homology pro-groups of the $(fZ)$--end of $X$ are stable, and $A$ induces an isomorphism
\[
   \check{\widetilde{H}}{}_{n-k-1}^D(fZ^c) \xrightarrow[\qquad]{A}
   H_c^k(Z)
\]
called the coarse Alexander duality isomorphism.
\end{thm}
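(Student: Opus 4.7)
The plan is to establish that $A$ is a pro-isomorphism by constructing an explicit inverse pro-morphism $B$ and verifying that $AB$ and $BA$ are equivalent to identity morphisms in the pro-category. The three ingredients assembling $A^{R+E}$ are (i) the connecting homomorphism $\partial$, (ii) the Poincar\'e duality map $\bar P$, and (iii) the pullback $f^*$; I will build a pro-inverse by inverting each ingredient separately and chasing the resulting controlled chain homotopies.

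For ingredient (i), the map $\partial \colon H_{n-k}(X,Y^{R+E}) \to \widetilde H_{n-k-1}(Y^{R+E})$ is an isomorphism for every $R$ because $X$ is uniformly acyclic; hence $\partial^{-1}$ assembles into a genuine inverse pro-morphism at this stage. For ingredient (ii), the second chain map $P \colon C_c^{\circ}(X) \to C_{n-\circ}(X)$ built into the coarse $PD(n)$ structure restricts, by the same displacement argument given in the excerpt for $\bar P$, to a chain map between the relative complexes appearing in the big diagram. The $E_0$--controlled chain homotopies $P\bar P \sim \mathrm{id}$ and $\bar P P \sim \mathrm{id}$ restrict as well, so $P$ provides a pro-inverse to $\bar P$ at the level of the relative (co)homology pro-groups, with only a bounded shift of indexing.

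The heart of the proof is ingredient (iii): producing a chain-level ``transfer'' $\tau \colon C_c^{\circ}(Z) \to C_c^{\circ}\bigl( N_R(K)\bigr)$ dual to an approximate coarse retraction $N_R(K) \to Z$. Using the bounded geometry of $Z$, the uniform acyclicity of both $Z$ and $X$, and the coarse embedding property of $f_\#$ supplied by Lemma~\ref{lem:ChainCoarseEmbedding}, I would build $\tau$ inductively on cells so that it has bounded displacement along $f$ and so that $f^\# \circ \tau$ is chain homotopic to the identity on $C_c^{\circ}(Z)$ via a controlled chain homotopy. After enlarging the index from $R$ to $R'$ by a bounded amount, the composition $\tau \circ f^\#$ will likewise be chain homotopic to the restriction $C_c^{\circ}\bigl( N_{R'}(K)\bigr) \to C_c^{\circ}\bigl( N_R(K)\bigr)$ of the identity, by a controlled chain homotopy. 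This is exactly the flexibility built into the definition of equivalence of morphisms of pro-groups.

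Assembling these three pro-inverses gives the candidate inverse pro-morphism
\[
   B^R \colon H_c^k(Z) \xrightarrow{\tau_*} H_c^k\bigl(N_R(K)\bigr) \xrightarrow{P} H_{n-k}(X, Y^{R+E}) \xrightarrow{\partial} \widetilde H_{n-k-1}(Y^{R+E}).
\]
The verifications $A \circ B \sim \mathrm{id}$ and $B \circ A \sim \mathrm{id}$ then reduce to composing the three controlled homotopies $f^\# \tau \sim \mathrm{id}$, $\bar P P \sim \mathrm{id}$, and (symmetrically) $\tau f^\# \sim \mathrm{id}$, $P \bar P \sim \mathrm{id}$, tracking the resulting bounded shifts in $R$. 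I expect the main obstacle to be the construction of $\tau$ in step (iii), and in particular verifying the two-sided chain-homotopy inversion with displacement constants that interlock compatibly with $P$ and $\partial$ across all sufficiently large $R$; this is where the combination of uniform acyclicity of $Z$ with the coarse $PD(n)$ structure of $X$ is essential.
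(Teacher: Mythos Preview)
The paper does not prove this theorem: it is quoted verbatim as \cite[Theorem~7.7]{kapovichkleiner05} and used as a black box. So there is no ``paper's own proof'' to compare against here.

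That said, your outline is essentially the strategy Kapovich--Kleiner themselves follow. Steps (i) and (ii) are unproblematic. For step (iii), the object you need is not really a cochain transfer $\tau$ but a controlled chain map $g_\#\colon C_\circ\bigl(N_R(K)\bigr)\to C_\circ(Z)$ approximating a coarse inverse to $f$ on its image; one builds $g_\#$ skeleton by skeleton using the bounded geometry and uniform acyclicity of $Z$ (to fill cycles in $Z$), and the uniform acyclicity of $X$ (to build the controlled homotopy $f_\# g_\# \sim \iota$ from $N_R(K)$ into $N_{R'}(K)$). Dualizing $g_\#$ gives your $\tau$. The one place your sketch is slightly imprecise is the claim that $\tau\circ f^\#$ is chain homotopic to the restriction map $C_c^\circ\bigl(N_{R'}(K)\bigr)\to C_c^\circ\bigl(N_R(K)\bigr)$: what you actually get is that $f_\# g_\#$ is chain homotopic, via an $E'$--controlled homotopy, to the inclusion $C_\circ\bigl(N_R(K)\bigr)\hookrightarrow C_\circ\bigl(N_{R+E'}(K)\bigr)$, and dualizing this gives the pro-equivalence you want. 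With that correction the argument goes through.
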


\section{Coarse Alexander duality for pairs}
\label{sec:DualityForPairs}

The ordinary coarse Alexander duality isomorphism theorem deals with a single coarse embedding $Z \to X$.
In this section, we establish coarse Alexander duality for pairs.  More specifically we introduce a duality theorem for a coarse embedding of a pair of complexes $Z_0\subseteq Z_1$ in a coarse $PD(n)$ space $X$.
In a general context, \v{C}ech homology theories do not satisfy the Eilenberg--Steenrod axioms since they do not satisfy the exactness axiom for pairs.  However in the setting of coarse $PD(n)$ spaces, coarse Alexander duality implies that the inverse system defining the homology pro-group of the filtered end is stable.  Thus in this setting we establish the exactness of the long exact sequence of a pair.
The main objective of this section is to explain the details of this exactness statement.

More precisely the coarse Alexander duality isomorphism for pairs is proved using the long exact sequence of a pair in the setting of \v{C}ech homology of relative ends, which we show is exact and satisfies a naturality property with respect to the coarse Alexander duality isomorphism.

Suppose $X$ is a coarse $PD(n)$ space, and $Z_1 \subset Z_0$ is a pair of uniformly acyclic complexes with bounded geometry such that the inclusion $Z_1\hookrightarrow Z_0$ is a coarse embedding.  
If $f\colon Z_0 \to X$ is a coarse embedding, then it restricts to a coarse embedding $f\colon Z_1\to X$.
In this context we introduce a relative version of \v{C}ech homology associated to the complement of a pair of subcomplexes, defined as follows.
The coarse embedding $f$ induces an approximating chain map $f_\#\colon C_\circ(Z_0) \to C_\circ(X)$ by Lemma~\ref{lem:ChainCoarseEmbedding}.
We let $K_i$ denote the subcomplex of $X$ that is the support of $f_\#\bigl(C_\circ(Z_i)\bigr)$ for each $i$.  
The relative \v{C}ech homology of the $(Z_0,Z_1)$--end is given by
\[
   \check{H}^D_i(fZ_1^c, fZ_0^c) = \varprojlim\limits_R H_{i}(Y_1^{R},Y_0^{R})
\]
where $Y^R_i = \overline{X - N_R(K_i)}$.

As in the definition of coarse Alexander duality, the cochain maps
\[
   \bar{P} \colon C_{n-\circ}(X) \to C_c^\circ(X)
   \qquad \text{and} \qquad 
   f^\# \colon C_c^\circ(X) \to C_c^\circ(Z_0)
\]
induce cochain maps that make the following diagram commute:

\begin{equation*}
\xymatrix@C-12pt{
   0 \ar[r] &
   C_{n-\circ}(Y_1^{R+E},Y_0^{R+E}) \ar[r]\ar[d]^{\bar{P}} & 
   C_{n-\circ}(X,Y_0^{R+E}) \ar[r] \ar[d]^{\bar{P}}  &
   C_{n-\circ}(X,Y_1^{R+E})  \ar[r] \ar[d]^{\bar{P}}  &
   0 \\
   0 \ar[r] &
   C_c^\circ \bigl( N_{R}(K_0),N_{R}(K_1) \bigr) \ar[r] \ar[d]^{f^\#} &
   C_c^\circ\bigl( N_{R}(K_0) \bigr) \ar[r] \ar[d]^{f^\#} &
   C_c^\circ\bigl( N_{R}(K_1) \bigr) \ar[r] \ar[d]^{f^\#} &
   0 \\
   0 \ar[r] &
   C_c^\circ(Z_0,Z_1) \ar[r] &
   C_c^\circ(Z_0)   \ar[r] &
   C_c^\circ(Z_1)    \ar[r] &
   0
}
\end{equation*}

We define the \emph{coarse Alexander duality} morphism of pairs to be the equivalence class $A=[A^{R+E}]$ consisting of the homology maps induced by the leftmost column of the above diagram:
\[
   \bigl( {H}_{n-k}(Y_1^{R},Y_0^R) \bigr)_R
   \xrightarrow[\qquad]{A}
   H_c^k(Z_0,Z_1)
\]
The maps $A^R$ represent a well-defined morphism of pro-groups by similar reasoning to that used in Section~\ref{sec:CoarseAlexanderDuality} for the original coarse Alexander duality morphism.

The definitions of the coarse Alexander duality morphism for a single coarse embedding and for a pair of coarse embeddings are related by the following commutative diagram of homology groups:
\[
\xymatrix@C-5pt{
   \ar[rd] & & 
   \tilde{H}_{n-k-1}(Y_0^{R}) \ar[r]  &
   \tilde{H}_{n-k-1}(Y_1^{R})  \ar[rd]  & \\
   \cdots \ar[r]_{\!\!\!\!\!\!\!\!\!\!\!\!\!\!\!\!\!\!\!\!\!\boundary} &
   H_{n-k}(Y_1^{R},Y_0^{R}) \ar[r] \ar[d] \ar[ur]^{\boundary} & 
   H_{n-k}(X,Y_0^{R}) \ar[r] \ar[d] \ar[u]_{\boundary} &
   H_{n-k}(X,Y_1^{R})  \ar[r]_{\ \ \ \ \ \ \ \boundary} \ar[d] \ar[u]_{\boundary} &
   \cdots \\
   \cdots \ar[r]^{\!\!\!\!\!\!\!\!\!\!\!\!\!\!\!\delta} &
   H_c^k(Z_0,Z_1) \ar[r] &
   H_c^k(Z_0)   \ar[r] &
   H_c^k(Z_1)    \ar[r]^{\ \ \ \delta} &
   \cdots
}
\]
The lower half of this diagram of homology maps is induced by the preceding commutative diagram of cochain maps.
The wavy top row of the diagram consists of the reduced homology long exact sequence of the pair $(Y_1^R,Y_0^R)$.
Each map labeled $\boundary$ in the diagram is the connecting homomorphism of a homology long exact sequence.
One can easily verify that this homology diagram commutes.  Indeed the lower half commutes due to the well-known fact that a morphism between short exact sequences of chain complexes always induces a morphism between the corresponding homology long exact sequences. (See, for instance, \cite[Thm.~4.5.4]{Spanier_AlgebraicTop}.)
Similarly each region in the upper half of the diagram commutes by naturality of the connecting homomorphism $\boundary$ with respect to inclusion maps.

We now condense the above homology diagram, showing only the top and bottom rows. Observe that the vertical maps in the condensed diagram below are, by definition, equal to the maps in the coarse Alexander duality morphism.
\begin{equation}
\label{eqn:DualityofPairs}
\begin{gathered}
\xymatrix@C-10pt{
   \cdots \ar[r] &
   H_{n-k}(Y_1^{R},Y_0^{R}) \ar[r]^{\,\,\boundary} \ar[d]^{A_R} & 
   \tilde{H}_{n-k-1}(Y_0^{R}) \ar[r] \ar[d]^{A_R}  &
   \tilde{H}_{n-k-1}(Y_1^{R})  \ar[r] \ar[d]^{A_R}  &
   \cdots \\
   \cdots \ar[r]^{\!\!\!\!\!\!\!\!\!\!\!\!\!\!\!\delta} &
   H_c^k(Z_0,Z_1) \ar[r] &
   H_c^k(Z_0)   \ar[r] &
   H_c^k(Z_1)    \ar[r]^{\ \ \ \delta} &
   \cdots
}
\end{gathered}
\end{equation}

\begin{thm}[Coarse Alexander duality for pairs]
\label{thm:CAD_Pairs}
There exists a commutative diagram 
\[
\xymatrix@C-6pt{
   \cdots \ar[r] &
   \displaystyle
   \check{H}_{n-k}^D(fZ_1^c,fZ_0^{c}) \ar[r]^{\,\,\,\,\,\boundary} \ar[d]^{A}_{\cong} & 
   \displaystyle \check{\widetilde{H}}{}^D_{n-k-1}(fZ_0^{c}) \ar[r] \ar[d]^{A}_{\cong}  &
   \displaystyle \check{\widetilde{H}}{}^D_{n-k-1}(fZ_1^{c})  \ar[r] \ar[d]^{A}_{\cong}  &
   \cdots \\
   \cdots \ar[r]^{\!\!\!\!\!\!\!\!\!\!\!\!\!\delta} &
   H_c^k(Z_0,Z_1) \ar[r] &
   H_c^k(Z_0)   \ar[r] &
   H_c^k(Z_1)    \ar[r]^{\ \ \ \delta} &
   \cdots
}
\]
in which the first row is an exact sequence induced by the long exact sequence of the pair $(Y_1^R,Y_0^R)$, the second row is a long exact sequence of the pair $(Z_0,Z_1)$,
and the vertical maps are coarse Alexander duality isomorphisms.
\end{thm}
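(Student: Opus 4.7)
The plan is to reduce the theorem to a combination of Theorem~\ref{thm:CAD} (single-embedding coarse Alexander duality), Lemma~\ref{lem:FiveLemma} (Dydak's five lemma for stability), Lemma~\ref{lem:lim1} (exactness of stable inverse limits), and the classical five lemma. The point of diagram~\eqref{eqn:DualityofPairs} is that, at each finite level $R$, a commutative diagram relating the homology long exact sequence of the pair $(Y_1^R,Y_0^R)$ with the cohomology long exact sequence of the pair $(Z_0,Z_1)$ is already present. What remains is to pass to the inverse limit over $R$ and verify that the resulting top row is exact and that the vertical maps become isomorphisms.

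First I would let $R$ vary and interpret~\eqref{eqn:DualityofPairs} as a level morphism from an exact sequence of inverse sequences --- whose terms alternate between the absolute pro-groups $\bigl(\widetilde{H}_{n-k-1}(Y_i^R)\bigr)_R$ and the relative pro-groups $\bigl(H_{n-k}(Y_1^R,Y_0^R)\bigr)_R$ --- to the rudimentary sequence given by the cohomology long exact sequence of $(Z_0,Z_1)$. Theorem~\ref{thm:CAD} applies separately to the coarse embeddings $f\colon Z_0\to X$ and $f\colon Z_1\to X$, so every absolute pro-group in the top row is stable and pro-isomorphic to the corresponding rudimentary $H_c^*$ group. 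Dydak's five lemma (Lemma~\ref{lem:FiveLemma}) applied to any five consecutive terms in the top row then forces each relative pro-group $\bigl(H_{n-k}(Y_1^R,Y_0^R)\bigr)_R$ to be stable as well, since it is sandwiched between four stable absolute pro-groups.

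Once stability is known for every term, Lemma~\ref{lem:lim1} guarantees that taking inverse limits preserves exactness, so the top row of the diagram in the theorem statement is an exact sequence; commutativity of the full diagram is inherited from the commutativity of~\eqref{eqn:DualityofPairs} at each $R$ together with the functoriality of inverse limits (Proposition~\ref{prop:Morphisms}). Finally, in the resulting commutative diagram with exact rows, four out of every five consecutive vertical maps are the absolute coarse Alexander duality isomorphisms supplied by Theorem~\ref{thm:CAD}, and the classical five lemma forces the remaining relative vertical maps $A$ to be isomorphisms. The only real obstacle in this plan is the stability of the relative inverse systems $\bigl(H_{n-k}(Y_1^R,Y_0^R)\bigr)_R$, which is where Dydak's lemma is indispensable: in general an inverse limit of exact sequences need not be exact, so without stability one could not legitimately extract an exact sequence of \v{C}ech groups from the exact sequences available at each level $R$.
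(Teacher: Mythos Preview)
Your proposal is correct and follows essentially the same route as the paper: use Theorem~\ref{thm:CAD} to see that the absolute pro-groups are stable, apply Dydak's five lemma (Lemma~\ref{lem:FiveLemma}) to deduce stability of the relative pro-groups, invoke Lemma~\ref{lem:lim1} to get exactness of the limiting top row, and finish with the classical five lemma. The paper's proof is slightly terser but the logical structure is identical.
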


\begin{proof}
Passing to the inverse limit with respect to $R$ in \eqref{eqn:DualityofPairs} gives the desired commutative diagram.
By coarse Alexander duality, the second and third columns of \eqref{eqn:DualityofPairs} are pro-isomorphisms.  Hence the second and third vertical maps in the limiting diagram are isomorphisms.  If we knew that the limiting diagram had exact rows, we would be done by the traditional five lemma.
Since the second row is known to be an exact sequence, we only need to show that the top row of the limiting diagram is an exact sequence.

The inverse sequences in the second and third columns of \eqref{eqn:DualityofPairs} are stable since each is pro-isomorphic to a rudimentary inverse system.
By Lemma~\ref{lem:FiveLemma} (the five lemma for stability), the inverse sequence in the first column is stable as well.
Since the first row of \eqref{eqn:DualityofPairs} is an exact sequence of stable inverse systems, its inverse limit is again exact by Lemma~\ref{lem:lim1}.
\end{proof}

We now describe a setting in which inclusions of pairs induce homomorphisms between the corresponding \v{C}ech homology groups of relative ends.

\begin{defn}
A \emph{coarse inclusion} is an inclusion of simplicial complexes $B_0 \subseteq A_0$ such that the inclusion map is a coarse embedding.
A \emph{coarse inclusion of pairs} is an inclusion of simplicial pairs $(B_0,B_1) \to (A_0,A_1)$ such that each of the inclusions
\[
\xymatrix{
   B_1 \ar[r] \ar[d] & B_0 \ar[d] \\
   A_1 \ar[r]        & A_0
}
\]
is a coarse embedding.
\end{defn}

\begin{prop}
\label{prop:HomologyAxioms}
Suppose $(B_0,B_1) \to (A_0,A_1)$ is a coarse inclusion of pairs such that $A_0,A_1,B_0,B_1$ are uniformly acyclic with bounded geometry,
and suppose $f \colon A_0 \to X$ is a coarse embedding into a coarse $PD(n)$ space $X$.
\begin{enumerate}
   \item There exists a commutative diagram 
\[
\xymatrix{
   \check{H}_i^D (fA_1^c,fA_0^c) \ar[r] \ar[d]^A_{\cong} & \check{H}_i^D (fB_1^c,fB_0^c) \ar[d]^A_{\cong} \\
   H_c^{n-i}(A_0,A_1) \ar[r]        & H_c^{n-i}(B_0,B_1)
}
\]
in which the vertical maps are Alexander duality isomorphisms and the horizontal maps are induced by inclusions of spaces.
    \item \textup{(}Naturality\textup{)} The connecting homomorphism in the \v{C}ech homology long exact sequence is natural with respect to coarse inclusions, in the sense that the following diagram commutes:
\[
\xymatrix{
   \check{H}_i^D (fA_1^c,fA_0^c) \ar[r]^{\ \ \boundary} \ar[d] & \check{\widetilde{H}}{}^D_{i-1}(fA_0^{c}) \ar[d] \\
   \check{H}_i^D (fB_1^c,fB_0^c) \ar[r]^{\ \ \boundary}        & \check{\widetilde{H}}{}^D_{i-1}(fB_0^{c})
}
\]    
    \item \textup{(}Excision\textup{)}
    Suppose $(B_0,B_1) = (\overline{A_0 - U},\overline{A_1-U})$ for some subcomplex $U$ of $A_0$ contained in the interior of $A_1$. Then the map induced by inclusion
\[
   \check{H}_i^D (fA_1^c,fA_0^c)
   \longrightarrow
   \check{H}_i^D (fB_1^c,fB_0^c)
\]
is an isomorphism.
\end{enumerate}
\end{prop}

\begin{proof}
As in the definition of coarse Alexander duality for pairs, 
the cochain maps
\[
   \bar{P} \colon C_{n-\circ}(X) \to C_c^\circ(X)
   \qquad \text{and} \qquad 
   f^\# \colon C_c^\circ(X) \to C_c^\circ(A_0)
\]
induce morphisms that make the following diagram commute:
\begin{equation}
\label{eqn:Excision}
\begin{gathered}
\xymatrix{
   \text{pro-}H_i^D(fA_1^c,fA_0^c) \ar[d]^A \ar[r] & \text{pro-}H_i^D(fB_1^c,fB_0^c) \ar[d]^A \\
   H_c^{n-i}(A_0,A_1) \ar[r] & H_c^{n-i}(B_0,B_1)
}
\end{gathered}
\end{equation}
We obtain (1) by passing to the inverse limit in the above diagram.

The following diagram commutes
\[
\xymatrix{
   \text{pro-}H_{i}^D(f A_1^c,f A_0^c) \ar[r]^{\ \ \partial} \ar[d] &
      \text{pro-}\tilde{H}_{i-1}^D(f A_0^c) \ar[d]  \\
    \text{pro-}H_{i}^D(f B_1^c,f B_0^c) \ar[r]^{\ \ \partial}  & \text{pro-}\tilde{H}_{i-1}^D(f B_0^c)
}
\]
by naturality of the connecting homomorphism in simplicial homology.  Again, (2) follows by passing to the inverse limit.

Conclusion (3) follows from (1) as follows.
In \eqref{eqn:Excision}, the horizontal map in the second row is an isomorphism by excision for simplicial cohomology with compact supports.
We conclude that the horizontal map in the first row of \eqref{eqn:Excision} is a pro-isomorphism, since the other three maps in \eqref{eqn:Excision} are pro-isomorphisms.
\end{proof}

\section{The Jordan cycle and adjacency graph} \label{sec:Jordan_adj_graph}

Let $X$ be a coarse $PD(n)$ space.
In this section, we examine the structure of a coarse embedding of a union of $k$ halfspaces of dimension $n-1$ into $X$.  We show that such a union coarsely separates $X$ into $k$ deep components that are arranged in a cyclic pattern.  The main tools used to understand this structure are the Jordan cycle and an associated dual graph to the system of complementary deep components.  These objects are defined and studied below.

\subsection{A topological adjacency graph}

In order to motivate some of the machinery used in the coarse setting, we begin by examining a more classical situation in the topological category.

The classical Jordan--Brouwer separation theorem states that any subspace $T$ of $S^n$ homeomorphic to $S^{n-1}$ separates $S^n$ into exactly two complementary components, each of which has $T$ as its frontier (see, for instance, \cite[Thm.~4.7.15]{Spanier_AlgebraicTop}).  In this subsection, we extend this result to study embeddings of $T_k$, the space formed by gluing $k$ copies of the $(n-1)$--disc $D_1,\dots,D_k$ along their boundaries via homeomorphisms $S^{n-2}\to S^{n-2}$.
In other words, $T_k$ is homeomorphic to the join of $S^{n-2}$ and a discrete set of $k$ points.
We fix an embedding $T_k \to S^n$, and identify $T_k$ with its image.

We will see that $T_k$ separates $S^n$ into $k$ complementary components that are arranged in a natural cyclic order about their common boundary sphere $\boundary D_i \cong S^{n-2}$.

\begin{defn}[Adjacency graph]
By Alexander duality, the complement of $T_k$ in $S^n$ has $k$ components, each of which is path connected. Omitting one of the discs $D_i$ from $T_k$ yields a subspace $T^i_{k-1}$ whose complement in $S^n$ has $k-1$ components.  
The inclusion of $S^n - T_k$ into $S^n - T^i_{k-1}$ induces a surjective map of components that is one-to-one except for a pair of components $A_i$ and $B_i$ of $S^n - T_k$ that lie in the same component of $S^n - T^i_{k-1}$.
In other words, omission of the disc $D_i$ from $T_k$ has the effect of merging the two components $A_i$ and $B_i$ so that $A_i \cup B_i \cup \text{int}(D_i)$ is a component of $S^n - T^i_{k-1}$.
The \emph{adjacency graph} of complementary components of $T_k$ is the graph with one vertex for each component of $S^n - T_k$ and one edge for each $D_i$ representing the corresponding adjacency of components described above. 

In fact, we get an equivalent set of $k-1$ complementary components if we remove from $D_i$ any open $(n-1)$--ball $U$ having diameter less than $\epsilon$ such that $D_i - U$ is homeomorphic to $I \times S^{n-2}$.
Since $A_i \cup B_i \cup U$ is path connected, it contains a path from a point of $A_i$ to a point of $B_i$ that intersects $U$. Thus $U$ intersects each of $\bar{A}_i$ and $\bar{B}_i$. Since $\epsilon >0 $ is arbitrary, we see that each point of $D_i$ lies in $\bar{A}_i \cap \bar{B}_i$.
On the other hand, the argument of the preceding paragraph shows that $\text{int}(D_i)$ does not meet the closure of any component other than $A_i$ and $B_i$.
\end{defn}

If the subspace $T_k \to S^n$ were piecewise linear with respect to standard PL structures, we might interpret the adjacency graph in terms of a pair of dual cell structures on $S^n$, but this interpretation is difficult to make precise for wild topological embeddings.

\begin{defn}
The \emph{circuit} $\Circ_k$ is the graph with vertex set $\Z/k\Z$ and $k$ edges $e_1,\dots,e_k$ such that $e_i$ is adjacent to the vertices $i$ and ${i+1}$ (modulo $k$).
\end{defn}

\begin{prop}
Suppose $k \ge 2$.
For any topological embedding $T_k \to S^n$, the corresponding adjacency graph $\Gamma$ is isomorphic to the circuit $\Circ_k$. 
\end{prop}

\begin{proof}
We first claim that $\Gamma$ is a connected graph.
Indeed, if $\Gamma$ had a separation, we could partition the components of $S^n - T_k$ into two disjoint sets corresponding to the separation.  Taking the union of closures of components on each side of the separation gives a decomposition of $S^n$ as the union of two closed sets whose intersection is homeomorphic to $S^{n-2}$.  Such a decomposition is impossible by Alexander duality, since a subspace homeomorphic to $S^{n-2}$ cannot disconnect $S^n$.  Thus $\Gamma$ is connected.

We now claim that $\Gamma$ has no separating edge.
Indeed, if it did, we could decompose $S^n$ as the union of two closed sets whose intersection is a closed $(n-1)$--disc, which is again impossible by Alexander duality. Thus $\Gamma$ has no separating edge.

Note that the Euler characteristic of $\Gamma$ is zero since $\Gamma$ has an equal number of vertices and edges.
Since $\Gamma$ is a finite connected graph with $k$ vertices, no separating edges, and Euler characteristic zero, the only possibility is that it is a circuit of length $k$.
\end{proof}

\subsection{A coarse adjacency graph}

In principle, it seems reasonable to expect that the strategy above could be imitated in the coarse setting.  But to perform this conversion would require a fair amount of machinery.  Instead we replace the point-set topology arguments with entirely homological arguments.  The homological approach has the advantage that it gives the adjacency graph a more homological interpretation.  We will see that the adjacency graph is, in a certain sense, equivalent to a canonical homology class known as the Jordan cycle.

     \begin{figure}
      \begin{overpic}[scale=.5, tics=5]{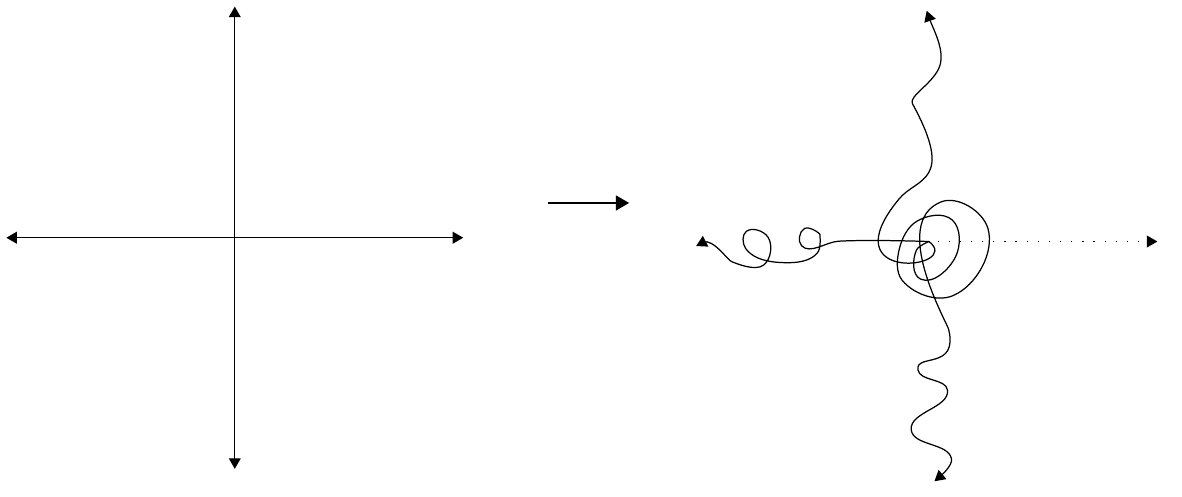}
      \put(5,38){\large{$W_4$}}
      \put(65,38){\large{$X$}}
      \put(49,27){\small{$f$}}
      \put(21,38){\small{$W^1$}}
      \put(32,23){\small{$W^3$}}
      \put(21,5){\small{$W^2$}}
      \put(3,23){\small{$W^4$}}
      \put(81,38){\small{$fW^1$}}
      \put(92,24){\small{$fW^3$}}
      \put(81,5){\small{$fW^4$}}
      \put(60,24){\small{$fW^2$}}
       \end{overpic}
	    \caption{A coarse embedding of $W_4$, a union of four $1$--dimensional halfspaces glued along their boundaries, into a coarse $PD(2)$ space $X$.}
      \label{figure:WkSetup}
     \end{figure}
    
\begin{assumption}
We use the following notation throughout this section as illustrated in Figure~\ref{figure:WkSetup}. Let $X$ be a coarse $PD(n)$ space. Let $W^i$ be a bounded geometry metric simplicial complex homeomorphic to $\R^{n-1}_{\geq 0}$ for $i \in \{1, \ldots, k\}$ with $k \geq 1$. Let
\[
   W_k = \bigcup_{i =1}^k W^i,
\]
so that the halfspaces are glued along their boundaries by simplicial homeomorphisms. Let $L_i \subset W^i$ be the boundary of $W^i$, and let $W_0$ denote the image of these boundaries in $W_k$ after gluing. Assume that $W_0$ is coarsely embedded in each halfspace $W^i$.
Let $f\colon W_k \rightarrow X$ be a coarse embedding. Let $f_\#\colon C_\circ(W_k) \rightarrow C_\circ(X)$ be an approximating chain map as given by Lemma~\ref{lem:ChainCoarseEmbedding}. 
\end{assumption}

For each $I \subset \{1,\dots,k\}$, let $W^I$ denote the union of halfspaces $W^i$ with $i \in I$.
If $I \subset J$, the inclusion $W^I \to W^J$ is a coarse embedding.
Indeed, by a standard argument, any path connecting two points of $W^I$ within $W^J$ can be replaced with a path in $W^I$ of controlled length using our assumption that $W_0 \to W^i$ is a coarse embedding for each $i$.

\begin{rem}[Cohomology calculations]
\label{rem:Calculations}
If $X$ is a coarse $PD(n)$ space, then coarse Alexander duality allows one to calculate \v{C}ech homology associated to many coarse embeddings $Z \to X$ in terms of the compactly supported cohomology of $Z$.
The cohomology of $Z$ is easily calculated for several spaces of interest in this section.
For example, it is well-known that $H^j_c(\R^\ell) \cong \Z$ if $j=\ell$ and is trivial otherwise. For halfspaces of dimension $n-1$ the group $H^{j}_c(\R^{n-1}_{\ge 0}) = 0$ for all $j$.
Similarly, $H^j_c(\R^\ell_{\ge 0}, \R^{\ell-1})= \Z$ when $j=\ell$ and is trivial otherwise.
A straightforward Mayer--Vietoris argument and induction on $k\ge 2$ shows that $H^{j}_c(W_k) = \Z^{k-1}$ if $j=n-1$ and is trivial otherwise.
\end{rem}

By the preceding calculation, the image $fW_k$ coarsely separates $X$ into $k$ deep components corresponding to the relative ends $\mathcal{E}(X,fW_k)$.
Intuitively, two relative ends may be considered adjacent if they meet along the image of a halfspace $fW^i$ for some $i \in \{1,\dots,k\}$. 

To make this idea more precise, we consider the effect of deleting a single halfspace from $W_k$.  As explained below, this deletion reduces the number of relative ends by one in a precisely controlled manner that allows us to formalize the definition of adjacency.

Consider $W_{k-1}^i := \overline{W_k - W^i}$, the subcomplex obtained by removing the $i$th halfspace from $W_k$. Whenever $2 \leq i \leq k$, coarse Alexander duality for the pair $W_{k-1}^i \hookrightarrow W_k$ gives the following commutative diagram with exact rows:
\begin{equation}
\label{eqn:C}
\begin{gathered}
\xymatrix@C-12pt{
   0 \ar[r] & \check{H}_1^D\bigl( (fW_{k-1}^i)^c,fW_k^c \bigr) \ar[r]^-{\partial} \ar[d]^{A}_{\cong} & \check{\widetilde{H}}{}_0^D(fW_k^c) \ar[r] \ar[d]^{A}_{\cong} & \check{\widetilde{H}}{}_0^D\bigl( (fW_{k-1}^i)^c \bigr) \ar[d]^{A}_{\cong} \ar[r] & 0  \\
   0 \ar[r] & H_c^{n-1}(W_k,W_{k-1}^i) \ar[r]  & H_c^{n-1}(W_k) \ar[r]  & H_c^{n-1}(W_{k-1}^i) \ar[r] & 0. \\
}
\end{gathered}
\end{equation}
Note that $H_c^j(W_k,W_{k-1}^i) = H^j_c(\R^{n-1}_{\ge 0}, \R^{n-2})$ by excision, so \eqref{eqn:C} reduces to
$0 \longrightarrow \Z \longrightarrow \Z^{k-1} \longrightarrow \Z^{k-2} \longrightarrow 0$.
By Proposition~\ref{prop_H0_gens}, the map induced by inclusion $\mathcal{E}(X,fW_k) \to \mathcal{E}(X,fW^i_{k-1})$ is surjective from a set of cardinality $k$ to a set of cardinality $k-1$.
In particular, for each $i$ there is a unique pair of distinct relative ends in $\mathcal{E}(X,fW_k)$ with the same image in $\mathcal{E}(X,fW^i_{k-1})$.
We consider these relative ends to be adjacent as explained in the following definition.

\begin{defn}[Adjacency graph] \label{def:adj_graph}
The {\it adjacency graph} $\Gamma$ has vertex set equal to the set of relative ends $V\Gamma=\mathcal{E}(X,fW_k)$.
In the degenerate case that $k=1$, the graph $\Gamma$ contains a single loop edge with both endpoints at the unique vertex.
We now assume $k\ge 2$.
For each $i$, we attach an unoriented edge $e_i$ that connects the pair of distinct vertices that have the same image under the mapping
\[
   \mathcal{E}(X,fW_k) \to \mathcal{E}(X,fW^i_{k-1}).
\]
In particular, the graph $\Gamma$ contains $k$ vertices and $k$ unoriented edges $e_1,\dots,e_k$.
\end{defn}

The edges of $\Gamma$ can be oriented as follows.
Let $v_i$ and $v'_i$ denote the vertices adjacent to $e_i$.
Since $v_i-v'_i$ generates the kernel of the map from $\check{H}_0^D(fW_k^c)$ to $\check{H}_0^D\bigl((fW_{k-1}^i)^c\bigr)$ induced by the inclusion $fW_k^c\hookrightarrow (fW_{k-1}^i)^c$, then $v_i-v'_i$ also generates the image of the homomorphism $\partial\colon \check{H}_1^D\bigl( (fW_{k-1}^i)^c,fW_k^c \bigr) \rightarrow \check{H}_0^D(fW_k^c)$. Therefore, a choice of orientation for $e_i$ is formally equivalent to a choice of generator of the cyclic group $\check{H}_1^D\bigl( (fW_{k-1}^i)^c,fW_k^c \bigr)$.

Using coarse Alexander duality, we show in this section that the adjacency graph is isomorphic to the circuit $\text{Circ}_k$.

\subsection{The adjacency chain complex}

We have given a geometric definition above for the adjacency graph.  However, in order to better understand the homology of this graph, we will study it from a more formal homological point of view.
The construction in this subsection is analogous to the formal definition of the cellular chain complex that is used to develop cellular homology in terms of singular homology groups (see \cite[\S 2.2]{hatcher}).
Similarly we use \v{C}ech homology groups to define a formal adjacency chain complex in Definition~\ref{def_chain_complex}.
In Proposition~\ref{prop:chain_complex}, we show that this chain complex is isomorphic to the standard cellular chain complex of the geometrically defined adjacency graph.

Coarse Alexander duality for the pair $W_0 \hookrightarrow W_k$ yields the following commutative diagram with exact rows 
\begin{equation}
\label{eqn:A}
\begin{gathered}
\xymatrix{
0 \ar[r]  & \check{H}_1^D(fW_0^c) \ar[r]^-{\iota} \ar[d]^{A}_{\cong} & \check{H}_1^D(fW_0^c,fW_k^c) \ar[r]^-{\partial} \ar[d]^{A}_{\cong} & \check{\widetilde{H}}{}_0^D(fW_k^c) \ar[r] \ar[d]^{A}_{\cong} & 0   \\
0 \ar[r]  & H_c^{n-2}(W_0) \ar[r]^-{\delta}  & H_c^{n-1}(W_k,W_0) \ar[r]  & H_c^{n-1}(W_k) \ar[r] & 0, \\
}
\end{gathered}
\end{equation}
which reduces to $0 \longrightarrow \Z \longrightarrow \Z^k \longrightarrow \Z^{k-1} \longrightarrow 0$.

\begin{defn} \label{def_chain_complex}
Let $\mathcal{C}(\Gamma)$ denote the cellular chain complex $\bigl\{C_i(\Gamma), \boundary \bigr\}$ of the graph $\Gamma$.
We also find it convenient to consider the following \emph{adjacency chain complex} $\mathcal{C}^a$ with $C^a_i=0$ for $i \ne 0,1$ and with $C^a_1 \to C^a_0$ equal to
\[
\xymatrix{
  \check{H}_1^D(fW_0^c, fW_k^c) \ar[r]^-{\partial} & \check{H}_0^D(fW_k^c),
}
\]
where $\partial$ denotes the composition of the connecting homomorphism $\boundary$ of \eqref{eqn:A} with the canonical inclusion $\check{\widetilde{H}}{}_0^D \to \check{H}_0^D$. 
We note that the two nontrivial terms of $\mathcal{C}^a$ are each isomorphic to $\Z^k$.
\end{defn}

The group $C^a_0 = \check{H}_0^D(fW_k^c) \cong \Z^k$ is freely generated by the set of relative ends $\cE(X, fW_k)$ by Proposition~\ref{prop_H0_gens}. Hence, this basis for $C^a_0$ corresponds to the vertex set in the adjacency graph. 
A natural basis for the group $C^a_1 = \check{H}_1^D(fW_0^c, fW_k^c)$ is given by the following notation and lemma. 

\begin{notation}
  Let
\[
   \check{H}_1^D \bigl(fW_0^c, fW_k^c\bigr) \xrightarrow[\qquad]{\iota_i} \check{H}_1^D \bigl(fW_0^c, (fW^i)^c\bigr)
\]
be the homomorphism induced by inclusion. 
\end{notation}
 
\begin{lem} \label{lemma_MViso}
The maps $\iota_1,\dots,\iota_k$ induce an isomorphism
\[
    \check{H}_1^D\bigl(fW_0^c, fW_k^c\bigr) \xrightarrow[\qquad]{I} \prod_{i=1}^k \check{H}_1^D\bigl(fW_0^c, (fW^i)^c\bigr).
\]
\end{lem}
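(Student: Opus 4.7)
[Plan]
My plan is to introduce auxiliary pro-group homomorphisms $\phi_i$ fitting with the $\iota_i$ into a matrix of homomorphisms whose computation forces $I$ to be an isomorphism. For each $i$, the inclusion $W_0 \subseteq W_{k-1}^i$ induces, by enlarging the outer subspace of the relative pair on the complement side, a pro-group morphism
\[
   \phi_i \colon \check{H}_1^D \bigl( f(W_{k-1}^i)^c, fW_k^c \bigr) \longrightarrow \check{H}_1^D \bigl( fW_0^c, fW_k^c \bigr).
\]

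The first and main step is to verify that $\iota_i \circ \phi_i$ is induced by the single inclusion of complement pairs corresponding to the pair inclusion $(W_1^i, W_0) \subseteq (W_k, W_{k-1}^i)$. Once this is established, Theorem~\ref{thm:excision} applied with $U$ equal to the simplicial interior of $W_{k-1}^i$ (which gives $\overline{W_k - U} = W_1^i$ and $\overline{W_{k-1}^i - U} = W_0$) implies that $\iota_i \circ \phi_i$ is an isomorphism onto $\check{H}_1^D \bigl( fW_0^c, (fW_1^i)^c \bigr)$. The main obstacle here is the careful bookkeeping needed to match up the two successive inclusions of complement pairs with the single inclusion to which the excision theorem applies.

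The second step is to check that $\iota_j \circ \phi_i = 0$ whenever $j \neq i$. For such $j$, the halfspace $W_1^j$ lies inside $W_{k-1}^i$, so the metric neighborhood complement of $W_{k-1}^i$ is contained in that of $W_1^j$. Hence any relative cycle in the defining inverse system for the source of $\phi_i$ is already supported inside the inner set used in the definition of $\iota_j$, so the relative chain map sends it to zero.

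Finally, coarse Alexander duality (Theorem~\ref{thm:CAD_Pairs}) together with the cohomology computations noted before Definition~\ref{def:adj_graph} and excision for relative cohomology show that the source of $\Phi = \bigoplus_i \phi_i$, the target of $\Phi$, and the target of $I$ are each free abelian of rank $k$. By the previous two steps, the matrix of $I \circ \Phi$ in natural bases is diagonal with isomorphism entries, so $I \circ \Phi$ is an isomorphism of $\Z^k$; a determinant argument then forces $I$ to be an isomorphism.
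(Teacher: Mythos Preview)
Your argument is correct and takes a genuinely different route from the paper's proof. The paper transfers the whole question to compactly supported cohomology via the functoriality of coarse Alexander duality, reducing the lemma to the claim that the inclusion-induced map
\[
   H_c^{n-1}(W_k,W_0) \longrightarrow \prod_{i=1}^k H_c^{n-1}(W_1^i,W_0)
\]
is an isomorphism; this is then verified by a short Mayer--Vietoris induction on the number of halfspaces, using that $H_c^*(W_{\ell-1}^\ell \cap W_1^\ell, W_0) = H_c^*(W_0,W_0)=0$.

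By contrast, you remain on the \v{C}ech homology side and build an auxiliary map $\Phi = \bigoplus_i \phi_i$ going the other way. Your verification that $\iota_i\circ\phi_i$ is the excision isomorphism of Theorem~\ref{thm:excision} is exactly the identification used later in the paper (in the proof of Proposition~\ref{prop:chain_complex}), and your off-diagonal vanishing is the correct observation that for $j\ne i$ the pair inclusion $(W_1^j,W_0)\hookrightarrow(W_k,W_{k-1}^i)$ factors through $(W_1^j,W_1^j)$, forcing the induced map on relative \v{C}ech homology to be zero. The determinant argument then finishes things cleanly. Each approach has a mild cost: the paper's is shortest but leans on Mayer--Vietoris for $H_c^*$; yours avoids that induction but invokes the excision theorem (itself proved via duality) and the rank computations from the long exact sequences~\eqref{eqn:A}, \eqref{eqn:C}, and~\eqref{eqn:B}. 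Both are sound; yours has the pleasant side effect of exhibiting explicitly the splitting that underlies the product decomposition.
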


\begin{proof}
By Proposition~\ref{prop:HomologyAxioms}(1), it suffices to show that maps induced by inclusion induce an isomorphism of cohomology with compact supports
\[
   H_c^{n-1}(W_k,W_0) \longrightarrow
   \prod_{i=1}^k H_c^{n-1} (W^i,W_0).
\]
This claim follows inductively by a straightforward Mayer--Vietoris argument.
Indeed, for each $\ell=2,\dots,k$ inclusions induce an isomorphism
\[
   H_c^{n-1}(W_\ell,W_0)
   \longrightarrow
   H_c^{n-1}(W_{\ell-1}^\ell,W_0) \times H_c^{n-1}(W^\ell,W_0),
\]
since the identity $H_c^*(W_{\ell-1}^\ell \cap W^\ell,W_0)=H_c^*(W_0,W_0)=0$ holds in all dimensions.
\end{proof}

\begin{prop}
\label{prop:chain_complex}
The chain complex $\mathcal{C}^a$ is isomorphic to the cellular chain complex $\mathcal{C}(\Gamma)$ of the adjacency graph $\Gamma$. 
\end{prop}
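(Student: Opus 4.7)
The plan is to identify natural bases of the groups $C^a_0$ and $C^a_1$ matching the standard cellular bases of $C_0(\Gamma)$ and $C_1(\Gamma)$, and then to check that the boundary map $\partial\colon C^a_1 \to C^a_0$ coincides with the cellular boundary under this identification. The identification for $C^a_0 = \check{H}_0^D(fW_k^c)$ is immediate from Proposition~\ref{prop_H0_gens}, which yields a free basis indexed by $V\Gamma = \cE(X, fW_k)$.

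For $C^a_1$, I would construct an edge basis via pushforward. For each $i$, the inclusion of pairs $(W_k, W_0) \hookrightarrow (W_k, W_{k-1}^i)$ induces a morphism
\[
   \psi_i \colon \check{H}_1^D\bigl((fW_{k-1}^i)^c, fW_k^c\bigr) \longrightarrow \check{H}_1^D(fW_0^c, fW_k^c) = C^a_1.
\]
By Definition~\ref{def:adj_graph}, the source is infinite cyclic and its generators orient the edge $e_i$; fixing a generator $\beta_i$, set $e_i^a := \psi_i(\beta_i)$. To verify that $\{e_i^a\}_{i=1}^k$ is a free basis of $C^a_1$, I would invoke Lemma~\ref{lemma_MViso} and compute the composition $\iota_j \of \psi_i$, which is induced by the inclusion of pairs $(W_1^j, W_0) \hookrightarrow (W_k, W_{k-1}^i)$. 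When $j=i$, this inclusion meets the hypothesis of Theorem~\ref{thm:excision} with excised subcomplex $U$ given by the interior of $W_{k-1}^i$ minus $W_0$, so $\iota_i \of \psi_i$ is an isomorphism of infinite cyclic groups. When $j \ne i$, the inclusion $W_1^j \subset W_{k-1}^i$ lets the map of pairs factor through $(W_{k-1}^i, W_{k-1}^i)$; via coarse Alexander duality the corresponding compactly supported cohomology map factors through $H_c^{n-1}(W_{k-1}^i, W_{k-1}^i) = 0$, so $\iota_j \of \psi_i = 0$. Consequently $I(e_i^a)$ is supported in the $i$th factor of the product in Lemma~\ref{lemma_MViso}, confirming that $\{e_i^a\}$ is a basis.

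To match the boundary maps, I would apply naturality of the connecting homomorphism along the inclusion of pairs $(W_k, W_0) \hookrightarrow (W_k, W_{k-1}^i)$ to obtain
\[
   \partial(e_i^a) = \partial\bigl(\psi_i(\beta_i)\bigr) = \partial^{(i)}(\beta_i) \in \check{\widetilde{H}}{}_0^D(fW_k^c) \hookrightarrow C^a_0,
\]
where $\partial^{(i)}$ is the connecting homomorphism for the pair $(W_k, W_{k-1}^i)$. By Definition~\ref{def:adj_graph}, reading off diagram~\eqref{eqn:C}, this equals $v_i - v_i'$, which is precisely the cellular boundary of $e_i$ under the orientation determined by $\beta_i$. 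Hence the assignment $v\mapsto v$ and $e_i\mapsto e_i^a$ extends to the desired chain isomorphism $\mathcal{C}(\Gamma)\xrightarrow{\cong}\mathcal{C}^a$.

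The main technical content lies in the diagonal computation of $\iota_j \of \psi_i$: the case $j=i$ is the sole place where Theorem~\ref{thm:excision} is applied, while the vanishing for $j\ne i$ is a soft factoring argument through a trivial pair. The remaining work is diagram-chasing via naturality of the connecting homomorphism together with the pushforward identification of $C^a_1$ given by Lemma~\ref{lemma_MViso}.
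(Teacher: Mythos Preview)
Your proof is correct and follows essentially the same approach as the paper. Both arguments identify $C_0^a$ via Proposition~\ref{prop_H0_gens}, use excision (Theorem~\ref{thm:excision}) for the inclusion $(W_1^i,W_0)\hookrightarrow(W_k,W_{k-1}^i)$ and the basis of Lemma~\ref{lemma_MViso} to handle $C_1^a$, and conclude by naturality of the connecting homomorphism; the only organizational difference is that you push forward via $\psi_i$ into $C_1^a$ and then project by $\iota_j$, whereas the paper goes directly to the factors and applies $I^{-1}$---your explicit verification that $\iota_j\circ\psi_i=0$ for $j\ne i$ actually fills in a step the paper's commutativity diagram leaves to the reader.
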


\begin{proof}
    The vertex set $V\Gamma = \cE(X, fW_k)$ of the adjacency graph $\Gamma$ freely generates the group $\check{H}_0^D(fW_k^c) = C_0^a$ by Proposition~\ref{prop_H0_gens}, yielding an isomorphism $C_0(\Gamma) \rightarrow C_0^a$. 
    
    By the definition of $\Gamma$, its group of $1$--chains $C_1(\Gamma)$ is isomorphic to the product $\prod_{i=1}^k \check{H}_1^D\bigl((fW_{k-1}^i)^c,fW_k^c\bigr)$, and the boundary operator $\partial\colon C_1(\Gamma)\rightarrow C_0(\Gamma)$ is induced by the maps $$\partial\colon \check{H}_1^D\bigl( (fW_{k-1}^i)^c,fW_k^c \bigr) \rightarrow \check{H}_0^D(fW_k^c)$$ for $i\in \{1,2,\cdots, k\}$.
    By our standing assumptions, we may apply the excision result of Proposition~\ref{prop:HomologyAxioms}(3) to the inclusion $(W^i, W_0) \hookrightarrow (W_k, W_{k-1}^i)$.
    Thus this inclusion induces an isomorphism
\[
    \check{H}_1^D\bigl((fW_{k-1}^i)^c, fW_k^c\bigr) \rightarrow  \check{H}_1^D\bigl(fW_0^c, (fW^i)^c\bigr),
\]
    which
    defines an isomorphism $C_1(\Gamma) \rightarrow C_1^a$ induced by inclusion via the basis for $C_1^a$ given in Lemma~\ref{lemma_MViso}. 
    
    In order to see that the two chain complexes are isomorphic, it suffices to show that following diagram commutes:
\[
\xymatrix{
  C_1(\Gamma) \ar[r]^-{\partial} \ar[d]^-{\cong} & C_0(\Gamma)  \ar[d]^-{\cong}.  \\
  C_1^a \ar[r]^-{\partial} & C_0^a.
}
\]
    It suffices to check commutativity on the basis elements of the product $C_1(\Gamma)$, which follows, for each $i$, from the commutativity of the following diagram:
\[
\xymatrix@C=10pt@R=15pt{
  & \check{H}_1^D(fW_0^c, fW_k^c) \ar[r]^-{\partial} &  \check{H}{}_0^D(fW_k^c) & & \\
 & \prod_{i=1}^k \check{H}_1^D\bigl( fW_0^c, (fW^i)^c \bigr)  \ar[u]^-{\cong}_-{I^{-1}} & & &\\
 & \check{H}_1^D\bigl( fW_0^c,(fW^i)^c \bigr)   \ar[u]_-{i_*} & & &\\
 & \check{H}_1^D\bigl( (fW_{k-1}^i)^c,fW_k^c \bigr)  \ar[u]^-{\cong}_-{\text{Excision}} \ar[r]^-{\partial} & \check{H}{}_0^D(fW_k^c) \ar[uuu]^-{\text{id}},
}
\]
where $I^{-1}$ is the inverse of the isomorphism $I$ in Lemma~\ref{lemma_MViso}, and the map in the right column is the identity map.
To see that the diagram above commutes, notice that we have inclusions of pairs
\[
   \bigl( (fW_{k-1}^i)^c,fW_k^c \bigr)
   \subset
   (fW_0^c, fW_k^c)
   \subset
   \bigl( fW_0^c,(fW^i)^c \bigr)
\]
The map $i_* I^{-1}$ is the inverse of a map induced by inclusion, so the composition of the three maps in the left column of the diagram is a map induced by inclusion. Commutativity of the diagram now follows from the naturality of the connecting homomorphism, given by Proposition~\ref{prop:HomologyAxioms}.
\end{proof}

  \subsection{The adjacency graph is a circuit}

The homology of the adjacency graph may be seen by examining the adjacency chain complex and using \eqref{eqn:A}.  One sees that the adjacency graph is connected and homotopy equivalent to a circuit; in other words, it deformation retracts onto a core graph that is isomorphic to a circuit.
In this subsection, we determine that the adjacency graph is actually isomorphic to a circuit by showing that such a deformation retraction cannot collapse any edge.
The main technique used in this proof is the Jordan cycle, a $1$--cycle that coarsely links with the set $fW_0$.
We show that the adjacency graph is, in a sense, equivalent to the Jordan cycle.

\begin{defn}
\label{def:Jordan_cycle} 
Let the {\it Jordan cycle} $\sigma$ be a generator of $\check{H}_1^D(fW_0^c) \cong \Z$.
\end{defn}

The Jordan cycle maps into the group $C_1^a = \check{H}_1^D(fW_0^c, fW_k^c)$ via \eqref{eqn:A}. In the next lemma we show that under the natural coordinate system on $\check{H}_1^D(fW_0^c, fW_k^c)$ given in Lemma~\ref{lemma_MViso} the Jordan cycle maps to the diagonal. 

\begin{lem}
\label{lem:jordan_to_diag}
  If $\sigma$ denotes the Jordan cycle and
\[
  \check{H}_1^D(fW_0^c) \xrightarrow[\qquad]{\iota} \check{H}_1^D(fW_0^c,fW_k^c)
\]
is the map induced by inclusion, then $I \circ \iota(\sigma) = (1, 1, \ldots, 1)$, for some choice of generators \textup{(}each denoted $1$\textup{)} for each infinite cyclic group $\check{H}_1^D\bigl(fW_0^c, (fW^i)^c\bigr)$.
\end{lem}

\begin{proof}
The lemma follows by examining the following commutative diagram.
\[
\xymatrix{
   \la \sigma \ra \cong \check{H}_1^D(fW_0^c) \ar[r]^-{\iota} \ar[d]^-{\cong \, \iota_A} & \check{H}_1^D(fW_0^c, fW_k^c) \ar[dl]_-{\iota_i}  \ar[d]^-{\cong \, I = (\iota_1, \ldots, \iota_k)}   \\
   \check{H}_1^D\bigl( fW_0^c, (fW^i)^c \bigr) & \prod_{i=1}^k \check{H}_1^D \bigl(fW_0^c,(fW^i)^c \bigr) \ar[l]^-{\pi_i}
}
\]
The lower right triangle commutes by the definition of $I$ in terms of the universal mapping property for direct products. The upper left  triangle commutes since the three maps are each induced by inclusion.
By Lemma~\ref{lemma_MViso}, the map $I$ is an isomorphism.  Applying Theorem~\ref{thm:CAD_Pairs} to the pair $W_0 \hookrightarrow W^i$ for $1 \leq i \leq k$ gives the exact sequence:
\begin{equation}
\label{eqn:B}
\begin{gathered}
\xymatrix{
  0 \ar[r] & \check{H}_1^D(fW_0^c) \ar[r]^-{\iota_A}  & \check{H}_1^D(fW_0^c,(fW^i)^c) \ar[r] & 0   \\
}
\end{gathered}
\end{equation}
so that the map $\iota_A$ is an isomorphism for each $i$. It follows that $I \circ \iota(\sigma) = (1, 1, \ldots, 1)$.
\end{proof}

\begin{thm} \label{thm_chain_complex_graph}
The adjacency graph is isomorphic to a circuit.
\end{thm}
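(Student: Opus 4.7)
The plan is to deduce that $\Gamma$ is a circuit purely from the structure of its cellular chain complex together with the information that Lemma~\ref{lem:jordan_to_diag} gives about a generator of $H_1(\Gamma)$. By Proposition~\ref{prop:chain_complex} we may identify $\mathcal{C}(\Gamma)$ with $\mathcal{C}^a$ throughout. My strategy is a three--step elementary graph argument: first compute the Betti numbers of $\Gamma$, then use the explicit form of a generator of $H_1(\Gamma)$ to bound vertex degrees from below, and finally invoke the handshake lemma together with the classical characterization of cycle graphs.

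First I would extract the Betti numbers. From \eqref{eqn:A}, the map $\iota$ identifies $\check{H}_1^D(fW_0^c) \cong \mathbb{Z}$ with $\ker\bigl(\partial\colon C_1^a \to C_0^a\bigr)$, so under the identification of Proposition~\ref{prop:chain_complex} we obtain $H_1(\Gamma) \cong \mathbb{Z}$, generated by the image of the Jordan cycle $\sigma$. Since $\Gamma$ has $k$ vertices and $k$ edges, $\chi(\Gamma)=0$, and therefore $b_0(\Gamma)=b_1(\Gamma)=1$; in particular $\Gamma$ is connected. (If $k=1$ one checks directly that the single edge must be a loop, so $\Gamma \cong \mathrm{Circ}_1$, and the remainder of the argument may proceed under the assumption $k\ge 2$.)

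Next, using Lemma~\ref{lem:jordan_to_diag}, the generator $\iota(\sigma)$ corresponds, under the composition of the isomorphism $I$ of Lemma~\ref{lemma_MViso}, the excision isomorphisms, and the basis of $C_1(\Gamma)$ given by the edges $e_1,\dots,e_k$, to a $1$--chain of the form
\[
z = \sum_{i=1}^{k} \varepsilon_i\, e_i, \qquad \varepsilon_i \in \{\pm 1\}.
\]
(The signs depend on the chosen orientations of the edges but are all nonzero because the image under $I$ is $(1,1,\dots,1)$.) Since $\partial z = 0$, localizing at any vertex $v \in V\Gamma$ shows that the signed sum over edges incident to $v$ vanishes. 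Each edge incident to $v$ contributes $\pm \varepsilon_i \ne 0$, so $v$ is either incident to no edges (ruled out since $\Gamma$ is connected and $k \ge 2$) or to at least two edges. Hence every vertex has degree at least two.

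Finally the handshake lemma gives $\sum_{v} \deg(v) = 2|E\Gamma| = 2k$, while the lower bound just established forces $\sum_v \deg(v) \ge 2k$. Equality must therefore hold termwise, so every vertex has degree exactly two, and a connected finite graph in which every vertex has degree two is isomorphic to $\mathrm{Circ}_k$. I expect the main technical obstacle to be the bookkeeping in the middle step: one must carefully check that the three successive isomorphisms relating $C_1^a$ to $C_1(\Gamma)$ (Lemma~\ref{lemma_MViso}, excision, and the edge identifications of Definition~\ref{def:adj_graph}) really transport $I\circ\iota(\sigma)=(1,\dots,1)$ to an edge chain with all coefficients $\pm 1$. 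Once this signed Eulerian--cycle structure is verified, the conclusion is elementary graph theory.
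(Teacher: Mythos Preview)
Your argument is correct and follows essentially the same route as the paper: identify $\mathcal{C}(\Gamma)$ with $\mathcal{C}^a$ via Proposition~\ref{prop:chain_complex}, read off $H_1(\Gamma)\cong\Z$ from \eqref{eqn:A}, and use Lemma~\ref{lem:jordan_to_diag} to see that the generating $1$--cycle involves every edge with coefficient $\pm 1$. The only difference is in the final elementary graph-theory step: the paper phrases it as ``$\Gamma$ deformation retracts onto a circuit $\Gamma_1$, and since the generating cycle uses every edge, $\Gamma=\Gamma_1$,'' whereas you reach the same conclusion via the degree/handshake argument; the bookkeeping you flag is indeed routine, since the isomorphism of Proposition~\ref{prop:chain_complex} is built from $I^{-1}$ and the excision maps factor by factor.
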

\begin{proof}
The homology of the adjacency graph is equal to the homology of the adjacency chain complex $\mathcal{C}^a$ by Proposition~\ref{prop:chain_complex}, which is given by $H_0(\mathcal{C}^a)=\Z$ and $H_1(\mathcal{C}^a)=\Z$ by \eqref{eqn:A}. Therefore, the adjacency graph $\Gamma$ deformation retracts to a circuit $\Gamma_1$. 
By \eqref{eqn:A}, the infinite cyclic group $H_1(\mathcal{C}^a)$ is generated by the image of the Jordan cycle $\sigma \in \check{H}_1^D(fW_0^c)$.
In the coordinates on $C_1^a$ given by the basis in Lemma~\ref{lemma_MViso}, the Jordan cycle maps to the diagonal $(1,1, \ldots, 1)$ by Lemma~\ref{lem:jordan_to_diag}.
As described in the proof of Proposition~\ref{prop:chain_complex}, this basis for $C_1^a$ is in bijective correspondence with edges of the adjacency graph.
Therefore, each edge of $\Gamma$ also belongs to the circuit $\Gamma_1$, which implies that $\Gamma=\Gamma_1$ is a circuit.
\end{proof}

\subsection{Maps between adjacency graphs} 
\label{ss4}

\begin{notation}
    Let $1\leq i \leq k$, and let $W_{k-1}^i := \overline{W_k - W^i}$ be the subcomplex obtained by removing the $i$th halfspace from $W_k$. Let $\Gamma$ be the adjacency graph of the coarse embedding $f\colon W_k \rightarrow X$, and let $\Gamma_i$ be the adjacency graph of the restriction $f|_{W_{k-1}^i}\colon W_{k-1}^i \rightarrow X$. 
\end{notation}

\begin{thm} \label{thm:map_of_adj_graphs}
For each $k\ge 2$, the map of vertex sets induced by inclusion extends to a map of graphs $\phi\colon\Gamma\rightarrow \Gamma_i$, which collapses the circuit $\Gamma = \text{Circ}_k$ to the circuit $\Gamma_i = \text{Circ}_{k-1}$ by collapsing the $i$th edge to a point.
\end{thm}

\begin{proof}
When $k=2$ the result is trivial, so we assume $k>2$. 
For each $i$, the map $\mathcal{E}(X,fW_k) \to \mathcal{E}(X,fW^i_{k-1})$ collapses the pair of vertices corresponding to $W^i$ to a single vertex.
Whenever $i \ne j$, consider $W^{i,j}_{k-2} := \overline{W_k - (W^i \cup W^j)}$. Then the following diagram of maps induced by inclusion is commutative:
\[
\xymatrix{
   \mathcal{E}(X,fW_k) \ar[d]\ar[r] & \mathcal{E}(X,fW^i_{k-1}) \ar[d] \\
   \mathcal{E}(X,fW^j_{k-1}) \ar[r] & \mathcal{E}(X,fW^{i,j}_{k-2})
}
\]
Therefore, the map $\phi\colon \Gamma \to \Gamma_i$ sends the pair of vertices corresponding to $W^j$ to the pair of vertices corresponding to $W^j$.
\end{proof}

The geometric structure of the map between graphs given in Theorem~\ref{thm:map_of_adj_graphs} is detailed in the next lemma and will be needed in the next section. 

\begin{lem}
\label{l1}
Let $\phi\colon\Gamma \rightarrow \Gamma_i$ be the map defined in Theorem~\ref{thm:map_of_adj_graphs}. There exists a constant $R_0>0$ such that the following holds.
Suppose $u=\phi(v_i)=\phi(v'_i) \in V\Gamma_i$ for $v_i \neq v_i' \in V\Gamma$. Let $L \in \mathcal{N}(fW_{k-1}^i)$, as in Definition~\ref{defn:CechHomology}, satisfy $N_{R_0}(fW_{k-1}^i) \subseteq L$. Then, all deep components in $\pi_0 \bigl( \overline{X - L} \bigr)$ are stable, and there is a constant $M$ such that $f(W_k-N_M(W_{k-1}^i))\subset U$ where $U$ is the deep component in $\pi_0 \bigl( \overline{X - L} \bigr)$ associated to $u$.
\end{lem}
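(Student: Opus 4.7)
The plan is to establish the two assertions in turn. For the stability claim, apply Proposition~\ref{prop_H0_gens} to $\check{H}_0^D\bigl((fW_{k-1}^i)^c\bigr)$. By coarse Alexander duality (Theorem~\ref{thm:CAD}) together with the computation $H_c^{n-1}(W_{k-1}^i) \cong \Z^{k-2}$ (analogous to the calculation of $H_c^{n-1}(W_k)$ at the start of Section~\ref{sec:Jordan_adj_graph}), the reduced group $\check{\tilde H}_0^D\bigl((fW_{k-1}^i)^c\bigr)$ is free abelian of finite rank, and hence so is $\check{H}_0^D\bigl((fW_{k-1}^i)^c\bigr)$. Proposition~\ref{prop_H0_gens} then produces an $R_0$ such that for every $L \in \mathcal{N}(fW_{k-1}^i)$ containing $N_{R_0}(fW_{k-1}^i)$, all deep components of $\overline{X-L}$ are stable and are in bijective correspondence with $\mathcal{E}(X, fW_{k-1}^i)$.

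For the inclusion claim, fix such an $L$ and let $R$ satisfy $L \subseteq N_R(fW_{k-1}^i)$. Using the lower control of $f$, choose $M$ large enough that $f(W_k - N_M(W_{k-1}^i)) \subseteq X - N_{R+1}(fW_{k-1}^i) \subseteq \overline{X-L}$. The equality $W_{k-1}^i \cap W_1^i = W_0$ combined with a direct combinatorial-distance computation yields $W_k - N_M(W_{k-1}^i) = W_1^i - N_M(W_0)$, which is a connected subcomplex of the halfspace $W_1^i$ for $M$ large. An approximating chain map $f_\#$ from Lemma~\ref{lem:ChainCoarseEmbedding} then sends this connected subcomplex to a chain of connected support in $\overline{X-L}$, so $f(W_k - N_M(W_{k-1}^i))$ lies in a single connected component $U'$ of $\overline{X-L}$; the coarse embedding property forces $U'$ to be a deep component.

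It remains to identify $U'$ with $U$. By definition of $\phi$, the vertex $u$ corresponds to the unique deep component of $\overline{X-L}$ containing both $V_i$ and $V_i'$ (the deep components of $\overline{X-L''}$ associated to $v_i, v_i'$ for any sufficiently large $L'' \in \mathcal{N}(fW_k)$ with $L \subseteq L''$). The generator $\alpha_i$ of $\check{H}_1^D\bigl((fW_{k-1}^i)^c, fW_k^c\bigr) \cong \Z$ corresponds under the excision isomorphism of Theorem~\ref{thm:excision} to a generator of $\check{H}_1^D\bigl(fW_0^c, (fW_1^i)^c\bigr)$. The plan is to realize $\alpha_i$ by a relative 1-chain $c$ in $(\overline{X-L}, \overline{X-L''})$ whose support contains $f_\#(p)$ for some prescribed vertex $p \in W_1^i - N_M(W_0)$, built as a local transversal to $fW_1^i$ near $f_\#(p)$ (a generator of the excision target) whose two boundary 0-simplices are then extended outward through $\overline{X-L}$ into $V_i$ and $V_i'$ respectively. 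The extension is possible because $H_c^{n-1}(W_1^i) = 0$ implies that $\overline{X - N_R(fW_1^i)}$ has a single deep component for large $R$, so a deep point near $fW_1^i$ can be joined to either $V_i$ or $V_i'$ by a 1-chain staying outside a prescribed neighborhood of $fW_{k-1}^i$. Once $c$ is constructed, connectedness of $\supp c$ together with $\partial c \subseteq V_i \cup V_i' \subseteq U$ forces $\supp c \subseteq U$, and in particular $f_\#(p) \in U \cap U'$, giving $U = U'$. The hardest part will be the chain-level construction of $c$: the local-transversal step requires unwinding the excision isomorphism in conjunction with coarse Poincar\'e duality for the pair $(W_1^i, W_0)$, and the extension step requires careful coarse-path arguments to keep the connecting chains far from $fW_{k-1}^i$.
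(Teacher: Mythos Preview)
Your stability argument and your argument that $f\bigl(W_k - N_M(W_{k-1}^i)\bigr)$ lands in a single deep component $U'$ are fine and match the paper, which invokes Proposition~\ref{prop_H0_gens} and Proposition~\ref{ap} for exactly these two steps.

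The gap is in the identification $U'=U$. Your plan is to build a relative $1$--chain $c$ representing $\alpha_i$ whose support is connected, contains $f_\#(p)$, and whose boundary lands in $V_i\cup V_i'$. The step that fails is the extension of the endpoints of the local transversal into $V_i$ and $V_i'$. You justify it via $H_c^{n-1}(W_1^i)=0$, which says that $\overline{X-N_R(fW_1^i)}$ has a single deep component; but this only yields paths avoiding a neighborhood of $fW_1^i$, and such paths may freely cross $fW_{k-1}^i$. There is therefore no reason the extending $1$--chains lie in $\overline{X-L}$, and without that you cannot conclude $\supp c\subset U$. In fact, the assertion you need---that each endpoint of the transversal lies in the same component of $\overline{X-L}$ as $V_i$ or $V_i'$---is precisely the statement you are trying to prove.

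The paper avoids any chain-level construction by running the argument in the opposite direction. Pick a deeper $L_1\in\mathcal{N}(fW_{k-1}^i)$ and $K_1\in\mathcal{N}(fW_k)$ with $L_1\subset K_1$, and let $U_1,V_1,V_1'$ be the stable deep components of $\overline{X-L_1}$ and $\overline{X-K_1}$ associated to $u,v_i,v_i'$. By definition of $\phi$ one has $V_1,V_1'\subset U_1$, so there is an honest path $\alpha\subset U_1$ from $V_1$ to $V_1'$. Since $\alpha$ joins two distinct deep components $V,V'$ of $\overline{X-K}$ for an earlier $K\in\mathcal{N}(fW_k)$, it must meet $K\subset N_{R_1}(fW_k)$; hence some $f(w)$ with $w\in W_k$ lies in $N_{R_1}(\alpha)$. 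A short constant chase then shows simultaneously that $f(w)$ lies in the component of $\overline{X-L}$ associated to $u$ and that $w\notin N_M(W_{k-1}^i)$, producing a point of $f\bigl(W_k-N_M(W_{k-1}^i)\bigr)$ in that component. Connectedness comes for free here because one works inside the single component $U_1$ from the outset, and no representative of $\alpha_i$ is ever needed.
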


\begin{proof}
As observed in Remark~\ref{rem:Calculations}, the cohomology group $H^{n-1}_c(W_{k-1}^i)$ is free abelian of finite rank. Thus by Coarse Alexander Duality (Theorem~\ref{thm:CAD}), the dual inverse system 
$\{ H_0(\overline{X - L})\}_{L \in \mathcal{N}( fW^i_{k-1} )}$ is stable and its inverse limit is free abelian of finite rank.
By Proposition~\ref{prop_H0_gens}, there exists $R_0<\infty$ such that if $L \in \mathcal{N}(fW_{k-1}^i)$ satisfies $N_{R_0}(fW_{k-1}^i) \subseteq L$, then all deep components in $\pi_0 \bigl( \overline{X - L} \bigr)$ are stable, and the natural projection
\[
   \mathcal{E}(X,fW_{k-1}^i) \to \pi_0\bigl( \overline{X - L} \bigr)
\]
is a bijection onto the set of deep components of $\overline{X - L}$. 
Proposition~\ref{prop_H0_gens} also provides a neighborhood $L' \in \mathcal{N}(fW_{k-1}^i)$ such that any component in $\pi_0( \overline{X-L} )$ that is not deep is contained in $L'$.
Choose $M>0$ so that $f$ induces a map
\[
   \overline{ W_k - N_M(W_{k-1}^i)} \to \overline{X-L'} \to \overline{X-L}.
\]
Then $f$ maps each component in $\pi_0 (\overline{ W_k - N_M(W_{k-1}^i)})$ into a single deep component in $\pi_0\bigl( \overline{X - L} \bigr)$.
We will prove that $U$ is the deep component associated to $u$. 

Let $K \in \mathcal{N}(fW_{k})$ such that all deep components in $\pi_0 \bigl( \overline{X - K} \bigr)$ are stable and let $V$ and $V'$ be the deep components in $\pi_0 \bigl( \overline{X - K} \bigr)$ associated to $v_i$ and $v_i'$, respectively. Since $K$ is a subcomplex in $\mathcal{N}(fW_{k})$, there is a number $R_1$ such that $K\subset N_{R_1}\bigl(f(W_k)\bigr)$. Using the hypothesis $f$ is a coarse embedding, we can find a positive constant $R_2$ such that $f\bigl(N_M(W_{k-1}^i)\bigr)\subset N_{R_2}(fW_{k-1}^i)$. Let $L_1 \in \mathcal{N}(fW_{k-1}^i)$ such that $N_{R_1+R_2}(L)\subset L_1$, and let $U_1$ be the deep component in $\pi_0 \bigl( \overline{X - L_1} \bigr)$ that is associated to $u$. Therefore, $N_{R_1}(U_1)$ lies in the deep component $U'$ of $\pi_0 \bigl( \overline{X - L} \bigr)$ that is associated to $u$. We now prove that $f(W_k-N_M(W_{k-1}^i))$ has a non-empty intersection with $U'$ and this will imply that $U=U'$ is the deep component in $\pi_0 \bigl( \overline{X - L} \bigr)$ that is associated to $u$. 

Let $K_1 \in \mathcal{N}(fW_{k})$ such that $L_1\subset K_1$ and  $K\subset K_1$. Let $V_1$ and $V'_1$ be the deep components in $\pi_0 \bigl( \overline{X - K_1} \bigr)$ associated to $v_i$ and $v'_i$ respectively. Therefore, $V_1\subset V$, $V'_1\subset V'$, and both $V_1$ and $V'_1$ lie in the component $U_1$. Let $\alpha$ be a path in $U_1$ joining a point in $V_1$ to a point in $V'_1$. Therefore, $\alpha$ is a path between two deep components $V$ and $V'$ in $\pi_0 \bigl( \overline{X - K} \bigr)$, which implies that $\alpha \cap K$ is non-empty. Since $K \subset N_{R_1}(fW_k)$, there is a point $w$ in $W_k$ such that $f(w)\in N_{R_1}(\alpha)\subset N_{R_1}(U_1)\subset U'$. Also, $$f(w)\in N_{R_1}(U_1)\subset \overline{X-N_{R_2}(L)}\subset \overline{X-N_{R_2}(fW_{k-1}^i)}.$$ Thus, $w \in W_k-N_M(W_{k-1}^i)$ by the choice of $R_2$, which implies that $f(w)$ lies in $f(W_k-N_M(W_{k-1}^i))$. Therefore, the intersection $f(W_k-N_M(W_{k-1}^i))\cap U'$ is non-empty, so $U=U'$ is the deep component in $\pi_0 \bigl( \overline{X - L} \bigr)$ associated to~$u$.
\end{proof}

\begin{figure}
\begin{tikzpicture}[scale=1]

\draw (0,0) node[circle,fill,inner sep=1pt, color=black](1){} -- (0,2) node[circle,fill,inner sep=1pt, color=black](1){}-- (2,2) node[circle,fill,inner sep=1pt, color=black](1){}-- (2,0) node[circle,fill,inner sep=1pt, color=black](1){} -- (0,0) node[circle,fill,inner sep=1pt, color=black](1){}; 

\node at (-0.25,1) {\small{$e_1$}}; \node at (1,2.25) {\small{$e_2$}}; \node at (2.25,1) {\small{$e_3$}}; \node at (1,-0.25) {\small{$e_4$}};

\node at (1,-1) {$\Gamma$}; \node at (3.5,1) {$\longrightarrow$};

\draw (5,0) node[circle,fill,inner sep=1pt, color=black](1){} -- (6,2) node[circle,fill,inner sep=1pt, color=black](1){}-- (7,0) node[circle,fill,inner sep=1pt, color=black](1){}-- (5,0) node[circle,fill,inner sep=1pt, color=black](1){};

\node at (8,1) {$\longrightarrow$};

\node at (5.25,1) {\small{$e_1$}}; \node at (6.75,1) {\small{$e_3$}}; \node at (6,-0.25) {\small{$e_4$}};

\draw (10,0) node[circle,fill,inner sep=1pt, color=black](1){} -- (10.2,0.3) -- (10.3,0.6)--(10.35,0.9) --(10.35, 1.1)-- (10.3,1.4)-- (10.2,1.7)--(10,2) node[circle,fill,inner sep=1pt, color=black](1){};

\draw (10,0) node[circle,fill,inner sep=1pt, color=black](1){} -- (9.8,0.3) -- (9.7,0.6)--(9.65,0.9) --(9.65, 1.1)-- (9.7,1.4)-- (9.8,1.7)--(10,2) node[circle,fill,inner sep=1pt, color=black](1){};

\node at (9.4,1) {\small{$e_1$}}; \node at (10.6,1) {\small{$e_3$}};

\node at (10,-.25) {\small{$v$}}; \node at (10,2.25) {\small{$u$}};

\node at (10,-1) {$\Gamma_E$};

\end{tikzpicture}

\caption{Collapse of adjacency graphs}
\label{asecond}
\end{figure}

\begin{exmp}
\label{ex}
Consider the coarse embedding $f\colon W_k\rightarrow X$ for the case $k=4$, which will be used in the next section. Let $\Gamma$ be the adjacency graph of the embedding as in Figure~\ref{asecond}. Each edge $e_i$ is associated to the halfspace $W^i$. Let $E=W^1\cup W^3= \overline{W_4 - (W^2\cup W^4)}$ be the subcomplex of $W_4$ obtained by removing the second halfspace $W^2$ and the fourth halfspace $W^4$. The adjacency graph $\Gamma_E$ of the embedding $f|_{E}\colon E\rightarrow X$ is obtained as in Figure~\ref{asecond}. The relative ends $\mathcal{E}(X,fE)$ has two elements $u$ and $v$, which are used to label the vertices of $\Gamma_E$ as in Figure~\ref{asecond}. By Lemma~\ref{l1} there is a positive constant $R$ such that the following holds. For each  $L \in \mathcal{N}(fE)$ satisfying $N_{R}(fE) \subseteq L$ all deep components in $\pi_0 \bigl( \overline{X - L} \bigr)$ are stable and there is a constant $M$ such that $f(W^2-N_M(E))\subset U$ (resp. $f(W^4-N_M(E))\subset V$) where $U$ (resp. $V$) is the deep component in $\pi_0 \bigl( \overline{X - L} \bigr)$ that is associated to $u$ (resp. $v$).
\end{exmp}

\section{Amalgams with Klein bottle groups are not Kleinian} \label{sec:comm}

\begin{defn}[Isolated flats]
A $k$--flat is an isometrically embedded copy of $\E^k$ for $k\ge 2$. Let $Y$ be a $\CAT(0)$ space, and let $\text{Flat}(Y)$ denote the space of all flats in $Y$ with the topology of Hausdorff convergence on bounded sets.
A $\CAT(0)$ space $Y$ with a proper, cocompact, isometric action by a group $G$ has \emph{isolated flats} if it contains an equivariant collection $\mathcal{F}$ of flats such that $\mathcal{F}$ is closed and isolated in $\text{Flat(Y)}$ and each flat $F \subseteq X$ is contained in a uniformly bounded tubular neighborhood of some $F' \in\mathcal{F}$.
We refer the reader to \cite{HruskaKleinerIsolated} for more details on spaces with isolated flats.
\end{defn}
 
 \begin{defn}[Class of groups] \label{def:class_of_groups} 
  Let $S_g$ denote a closed orientable surface of genus $g$ greater than one. Let $T^2$ denote the $2$-torus, and let $K^2$ denote the Klein bottle.     
  Let $\cG_T$ denote the collection of amalgamated free products of the form $\pi_1(S_g)*_{\langle a=b \rangle}\pi_1(T^2)$, where $a$ is the homotopy class of an essential simple closed curve on $S_g$ and $b$ is the homotopy class of an essential simple closed curve on $T^2$.  
  Let $\cG_K$ denote the collection of amalgamated free products of the form $\pi_1(S_g)*_{\langle a=c \rangle}\pi_1(K^2)$, where $a$ is the homotopy class of an essential simple closed curve on $S_g$ and $c$ is the homotopy class of an essential simple closed curve on $K^2$ which is orientation reversing. In particular, the element $c \in \pi_1(K^2)$ acts by a glide reflection on the universal cover of $K^2$ equipped with a Euclidean metric. 
 \end{defn}

 \begin{cons}[Model geometry]  \label{const_model_geo}
  Let $G = \pi_1(S_g)*_{\la a=c \ra}\pi_1(K^2)\in \cG_K$. 
  For $\ell > 0$, there exists a hyperbolic metric on $S_g$ and a Euclidean metric on $K^2$ so the geodesic representatives of $a$ and $c$ each have length $\ell$. Glue a circle of length $\ell$ to $a$ and $c$ by isometries to form a $2$--complex $X$. Then $G \cong \pi_1(X)$.

  Observe that the universal cover $\tilde{X}$ of $X$ is a $\CAT(0)$ space with isolated flats, since it consists of flat subspaces separated by $\CAT(-1)$ regions, as discussed in \cite{kapovichleeb}.
  \end{cons}

\begin{defn}
A group $G$ is {\it Kleinian} if it is isomorphic to a discrete subgroup of $\Isom(\Hyp^3)$.
Note that for the purposes of this paper a Kleinian group may contain isometries that reverse orientation.
\end{defn}

 \begin{thm} \label{thm:not_kleinian}
  If $G \in \cG_K$, then $G$ is not a Kleinian group. 
 \end{thm}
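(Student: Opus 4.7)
I would argue by contradiction, assuming $G$ admits a faithful discrete representation $\rho\colon G\to\Isom(\HH^3)$. The strategy is to count the deep components of a certain coarse $W_4$--complement in $\HH^3$ in two different ways and obtain a discrepancy of three versus four.

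First I would set up the parabolic cusp coming from $\pi_1(K^2)$. Since $\pi_1(K^2)$ is virtually $\Z^2$ and $\rho$ is faithful and discrete, the image $\rho(\pi_1(K^2))$ is a discrete virtually $\Z^2$ subgroup of $\Isom(\HH^3)$; by the standard classification of virtually abelian discrete subgroups of $\Isom(\HH^3)$, it must be parabolic, fixing a unique point $\xi\in\partial\HH^3$ and acting cocompactly on each horosphere based at $\xi$. The precise invariance of Margulis cusp horoballs then supplies a horoball $B\subset\HH^3$ based at $\xi$ with $\rho(g)B\cap B=\emptyset$ whenever $\rho(g)\notin\rho(\pi_1(K^2))$. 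Since the amalgam structure of $G$ gives $\pi_1(S_g)\cap\pi_1(K^2)=\la a\ra$, this cusp disjointness applies in particular to every $g\in\pi_1(S_g)\setminus\la a\ra$.

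Second, inside the universal cover $\tilde X$ of the $2$--complex $X$ from Construction~\ref{const_model_geo}, I would pick out a Euclidean plane $P_E=\widetilde{K^2}$ meeting a hyperbolic plane $P_H=\widetilde{S_g}$ along a single line $L$ (a lift of $a=c$). The line $L$ divides each plane into two halfplanes, producing a coarsely embedded copy of the four-halfplane configuration $W_4=E_1\cup E_2\cup H_1\cup H_2$ of Section~\ref{sec:Jordan_adj_graph}. The orbit map $\phi\colon\tilde X\to\HH^3$ induced by $\rho$ is a coarse embedding since $G$ acts cocompactly on $\tilde X$ and properly on $\HH^3$, so the composition $W_4\hookrightarrow\tilde X\xrightarrow{\phi}\HH^3$ is a coarse embedding into the coarse $PD(3)$ space $\HH^3$. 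Theorem~\ref{thm:AdjacencyGraph} applied to this composition forces the adjacency graph to be $\text{Circ}_4$, and hence $\HH^3\setminus N\bigl(\phi(W_4)\bigr)$ has exactly four deep components.

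Finally I would count these deep components directly in $\HH^3$ to derive the contradiction. Because $\rho(\pi_1(K^2))$ acts cocompactly on the horosphere $\sigma=\partial B$, the restriction $\phi|_{P_E}$ is coarsely equivalent to $\sigma$. Choosing the basepoint $x_0$ on $\sigma$, cusp disjointness yields $\rho(g)x_0\in\HH^3\setminus\Int(B)$ for every $g\in\pi_1(S_g)$: if $g\in\la a\ra$ the point lies on $\sigma$, and otherwise $\rho(g)x_0\in\rho(g)\sigma\subset\HH^3\setminus B$. Hence $\phi(P_H)$ is coarsely contained in $\HH^3\setminus\Int(B)$. Removing a coarse neighborhood of $\phi(W_4)\subset\sigma\cup\phi(P_H)$ from $\HH^3$ therefore leaves exactly three deep components: a smaller horoball inside $B$, together with the two pieces into which the coarsely embedded $2$--plane $\phi(P_H)$ separates the exterior region $\HH^3\setminus B$; the splitting into two pieces follows from coarse Alexander duality applied to $P_H\to\HH^3$, using $H^2_c(\tilde S_g)=\Z$. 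This three-versus-four discrepancy is the required contradiction. The main obstacle is the one-sidedness step, where the pointwise Margulis disjointness must be upgraded into the coarse statement that $\phi(P_H)$ avoids $\Int(B)$ in a quantitative sense (requiring careful matching of the Margulis constant to the displacement of the approximating chain map) and then used to verify that the resulting coarse complement really has only three deep pieces rather than four.
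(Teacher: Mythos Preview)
Your one-sidedness step is essentially fine, but the counting step that follows is where the argument breaks.  You assert that coarse Alexander duality for $P_H\to\HH^3$ shows $\phi(P_H)$ cuts the exterior $\HH^3\setminus B$ into exactly two deep pieces.  Duality tells you that $\HH^3\setminus N_R\bigl(\phi(P_H)\bigr)$ has two deep components, but it says nothing about how many deep components $\bigl(\HH^3\setminus B\bigr)\setminus N_R\bigl(\phi(P_H)\bigr)$ has; the exterior of a horoball is not itself a coarse $PD(3)$ space, so you cannot apply duality there.  Concretely, a coarse plane that lies entirely below a horosphere and touches it along a line can be ``folded'' like a tent: in the upper half-space model with $\sigma=\{z=1\}$ and $\phi(L)$ the $y$--axis in $\sigma$, the surface $\{z=1-|x|\}$ is a coarse plane in $\{z\le 1\}$ whose removal cuts $\{z\le 1\}$ into \emph{three} deep regions (the two wedges and the region below the tent).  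Together with the horoball piece this gives four deep components, in perfect agreement with Theorem~\ref{thm:AdjacencyGraph}, and no contradiction arises.

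The missing ingredient is what the paper supplies: the action of the amalgamating element $a$ on the adjacency graph.  Since $a$ is a glide reflection on $\widetilde{K^2}$ it swaps the two Euclidean halfplanes $E_1,E_2$, while as a hyperbolic translation on $\widetilde{S_g}$ it fixes each hyperbolic halfplane $H_i$.  The only automorphism of $\text{Circ}_4$ swapping two edges and fixing the other two is a reflection through a pair of opposite edges, so $H_1$ and $H_2$ must be \emph{opposite} in the $4$--cycle.  Collapsing to the adjacency graph of $\widetilde{K^2}$ alone then forces $\phi(H_1)$ and $\phi(H_2)$ to lie in \emph{different} components of $\HH^3\setminus F$.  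That is what contradicts one-sidedness (either in your form, or, as the paper phrases it, by producing horosphere translates on both sides of $F$).  Without this dihedral-symmetry argument, the ``tent'' configuration above is not excluded and your three-versus-four count does not go through.
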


 \begin{proof}
Let $G = \pi_1(S_h)*_{\langle a=c \rangle}\pi_1(K^2)$, where $a$ is the homotopy class of an essential simple closed curve on $S_h$ and $c$ is the homotopy class of an essential simple closed curve on $K^2$ which is orientation reversing. Then, $G \cong \pi_1(X)$ as defined in Construction~\ref{const_model_geo}. Suppose towards a contradiction that $G$ is a Kleinian group. 

Let $P = \pi_1(K^2) \leq G$. Since  $P$ has a finite-index subgroup isomorphic to $\Z^2$, each conjugate of $P$ in $G$ acts parabolically on $\Hyp^3$.
Since the universal cover $\widetilde{X}$ has isolated flats, the Hausdorff distance between any two flat subspaces of $\widetilde{X}$ is infinite.  In particular, the stabilizers of distinct flats are not commensurable.
It follows that each conjugate of $P$ in $G$ fixes a distinct parabolic point in $\partial \Hyp^3$, the visual boundary of $\Hyp^3$. Indeed, if $p \in \p \Hyp^3$ is stabilized by a parabolic isometry of $G$, then $\Stab_G(p)$ does not contain any loxodromic elements since $G$ acts discretely on $\Hyp^3$. So, $\Stab_G(p)$ is a virtually abelian group of rank at most two. In particular, if $P$ and $gPg^{-1}$ for $g \in G$ fix the same parabolic point $p$, then $P$ and $gPg^{-1}$ are commensurable in $\Stab_G(p)$, a contradiction.
    
We will show there exists a horosphere $F$ stabilized by $P$ that does not intersect any of its $G$--translates. 
Each conjugate of $P$ in $G$ stabilizes a family of horospheres centered at a parabolic point on $\p\Hyp^3$. Since $G$ acts on $\Hyp^3$ discretely and $P$ acts on each horosphere cocompactly, if $F$ is any horosphere stabilized by $P$, then there exists a compact subset $C \subset F$ so that $P \cdot C$ covers $F$. Any horosphere $gF$ for $g \in G$ satisfying $gF \cap F \neq \emptyset$ can be translated using $P$ to a horosphere stabilized by a conjugate of $P$ that intersects $C$. Since $G$ acts on $\Hyp^3$ properly, there are only finitely many horospheres stabilized by a conjugate of $P$ which interset $C$. Thus, the horosphere $F$ can be shrunk so that it is disjoint from all of its $G$--translates. Moreover, all other $G$--translates of $F$ lie in one component of $\Hyp^3 \setminus F$.
    
     \begin{figure}
      \begin{overpic}[scale=.8, tics=5]{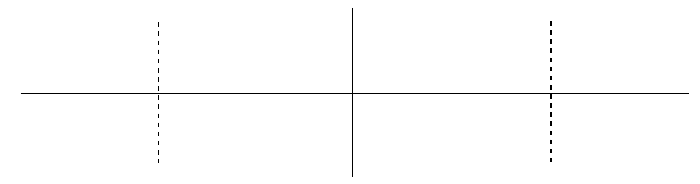}
    	\put(48,26.3){\small{$E_1$}}
    	\put(48,-3){\small{$E_2$}}
    	\put(-2,12){\small{$H_1$}}
    	\put(100,12){\small{$H_2$}}
      	\put(21,24){\small{$\tilde K_1$}}
    	\put(77,24){\small{$\tilde K_2$}}
      \end{overpic}
	    \caption{In $\widetilde X$, a Euclidean plane $E_1 \cup E_2$ intersects a hyperbolic plane $H_1 \cup H_2$ in a line. The image depicts the cyclic order of the image of these half-spaces under a $G$--equivariant coarse embedding to $\Hyp^3$. If Euclidean planes $\widetilde K_1$ and $\widetilde K_2$ are chosen very far from the plane $E_1 \cup E_2$, they map to opposite sides of the horospherical image of the plane $E_1\cup E_2$ in $\Hyp^3$, a contradiction.}
      \label{figure:planes}
     \end{figure}
    
To reach a contradiction, we use coarse separation results and the action of $G$ on $\widetilde{X}$ to prove the horosphere $F$ separates two of its $G$--translates. Let $\ell \subset \widetilde{X}$ be the line stabilized by $\la a \ra$. The line $\ell$ is contained in $\widetilde{S}_h$, a copy of the universal cover of $S_h$, and $\widetilde{K}_0$, a copy of the universal cover of the Klein bottle. Let $H_1$ and $H_2$ be the half-planes in $\widetilde{S}_h$ bounded by $\ell$, and let $E_1$ and $E_2$ be the half-planes in $\widetilde{K}_0$ bounded by $\ell$. Let $W = H_1 \cup H_2 \cup E_1 \cup E_2$.
    
Since $G=\pi_1(X)$ acts properly by isometries on $\Hyp^3$, there is a $G$--equivariant coarse embedding $f\colon \widetilde{X} \rightarrow \Hyp^3$. 
There is a constant $A$ such that for each lift $\widetilde{K}$ of the Klein bottle in $\widetilde{X}$ there is a $G$--translate $gF$ in $\Hyp^3$ such that the Hausdorff distance between $f(\widetilde{K})$ and $gF$ is less than $A$. Let $\Gamma$ be the adjacency graph of the coarse embedding $f|_{W}\colon W\rightarrow \Hyp^3$ as defined in Section~\ref{sec:Jordan_adj_graph}. The graph $\Gamma$ is a $4$--cycle whose edges are associated to (and labeled by) $H_1$, $H_2$, $E_1$, and $E_2$. 
Since the action of $a$ on $\widetilde{X}$ stabilizes $W$---permuting its four halfplanes---and the coarse embedding $W \to \Hyp^3$ is $a$--equivariant, the action of $a$ induces a graph automorphism of $\Gamma$.

Moreover, the action on $\Gamma$ stabilizes each edge labeled by $H_i$ and interchanges edges labeled by $E_1$ and $E_2$. Therefore, two edges of the $4$--cycle $\Gamma$ labelled by $H_1$ and $H_2$ are opposite. 
    
Recall that $\widetilde{K}_0=E_1\cup E_2$. By Example~\ref{ex} and the fact the Hausdorff distance between $f\widetilde{K}_0$ and $F$ is finite, the relative end $\mathcal{E}(\Hyp^3,F)$ consists of two elements. Moreover, there exists a positive constant $R$ such that for each  $L \in \mathcal{N}(F)$ satisfying $N_{R}(F) \subseteq L$, all deep components in $\pi_0 \bigl( \overline{\Hyp^3 - L} \bigr)$ are stable, and there is a constant $M$ such that $f(H_i-N_M( \widetilde{K}_0))\subset U_i$, where $U_1$  and $U_2$ are deep components in $\pi_0 \bigl( \overline{\Hyp^3 - L} \bigr)$, each associated to one of the two elements of $\mathcal{E}(\Hyp^3,F)$.

As explained in Example~\ref{ex}, the adjacency graph $4$--cycle associated to $W\to \Hyp^3$ collapses onto the adjacency graph $2$--cycle associated to $\widetilde{K}_0$ by a map that collapses each edge $H_i$ to a point. In particular, the two vertices of $\Gamma$ adjacent to $H_1$ map to the same side of $\widetilde{K}_0$, which is different from the side containing the two vertices adjacent to $H_2$.  In particular, $U_1\ne U_2$.

    Let $V_1$ and $V_2$ be the two components of $\Hyp^3-F$. Assume that $V_i$ is associated to the same relative end as $U_i$ for each $i$. Let $L$ be a subcomplex in $\mathcal{N}(F)$ such that $N_{R+A}(F) \subseteq L$. Observe that $N_A(U_i)\subset V_i$ for each $i$. By the above discussion, there is a positive number $M$ such that $f(H_i-N_M(\widetilde{K}_0))\subset U_i$ for each $i$. Let $\widetilde K_1$ and $\widetilde K_2$ be two lifts of the Klein bottle in $\widetilde{X}$ such that each intersection $\widetilde K_i\cap H_i$ contains a point not in $N_M(\widetilde{K}_0)$. Therefore, $f \widetilde K_i$ intersects with $U_i$ for each $i$. Let $g_1 F$ and $g_2 F$ be two $G$--translates of $F$ such that the Hausdorff distance between $g_i F$ and $f \widetilde K_i$ is less than $A$ for each $i$. Therefore, each intersection $g_i F\cap V_i$ is non-empty which implies that $g_i F\subset V_i$ for each $i$. In other words, the horosphere $F$ separates its $G$--translates $g_1 F$ and $g_2 F$ which is a contradiction. Therefore, $G$ is not a Kleinian group.  
\end{proof}
 
\begin{prop}
\label{prop:Accidental}
Let $S_g$ be a closed orientable surface of genus $g\ge 2$, and let $a$ be the homotopy class of an essential simple closed curve in $S_g$.
Then the amalgamated product
\[
   G = \pi_1(S_g) *_{\langle a \rangle} \bigl(\langle a \rangle \times \Z \bigr).
\]
is isomorphic to a geometrically finite Kleinian group.
\end{prop}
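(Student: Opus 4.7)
The plan is to realize $G$ as the fundamental group of a compact, orientable 3--manifold $M$ that satisfies the hypotheses of Thurston's Hyperbolization Theorem for Haken manifolds, and then extract the geometrically finite Kleinian structure from the resulting hyperbolic metric on $\Int(M)$. I would build $M$ by gluing the $I$--bundles $N_1 = S_g \times [0,1]$ and $N_2 = T^2 \times [0,1]$ along an essential annulus: identify an annular neighborhood of $a \times \{1\}$ in $S_g \times \{1\}$ with an annular neighborhood of $b \times \{0\}$ in $T^2 \times \{0\}$, where $b$ is a primitive essential simple closed curve in $T^2$, via a homeomorphism taking the core $a$ to $b$. The Seifert--van Kampen theorem yields $\pi_1(M) \cong \pi_1(S_g) *_{\langle a \rangle} \pi_1(T^2) \cong G$. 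The boundary of $M$ has three components: the surface $S_g \times \{0\}$, the torus $T^2 \times \{1\}$, and a closed middle surface of genus $g$ assembled from the complementary portions of $S_g \times \{1\}$ and $T^2 \times \{0\}$.

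Next I would verify the standard hyperbolization hypotheses on $M$: irreducibility, incompressibility of $\partial M$, and atoroidality up to boundary-parallel tori. Irreducibility and boundary incompressibility follow from the $I$--bundle structure of the two pieces together with the standard fact that gluing two irreducible 3--manifolds with incompressible boundary along a two-sided essential annulus preserves both properties. Since $M$ then has nonempty incompressible boundary, $M$ is Haken. For atoroidality, apply Bass--Serre theory to the amalgam $G = \pi_1(S_g) *_{\langle a \rangle} \pi_1(T^2)$: any $\Z^2$ subgroup acts on the Bass--Serre tree either by fixing a vertex or by translating along a line, and the latter case would force $\Z^2$ to embed in the cyclic edge group, a contradiction. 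Hence every $\Z^2$ subgroup of $G$ lies in a vertex stabilizer, and since $\pi_1(S_g)$ contains no $\Z^2$, every such subgroup is conjugate into $\pi_1(T^2)$. Consequently every essential torus in $M$ is isotopic to $T^2 \times \{1\}$.

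Thurston's Hyperbolization Theorem then supplies a complete, geometrically finite hyperbolic structure on $\Int(M)$, in which the torus boundary becomes a rank--$2$ cusp, the essential gluing annulus contributes an accidental rank--$1$ parabolic corresponding to the edge subgroup $\langle a \rangle$, and the two higher-genus ends are geometrically finite. The associated holonomy representation realizes $G \cong \pi_1(M)$ as a geometrically finite discrete subgroup of $\Isom(\HH^3)$, as desired. I expect the main technical point to be verifying incompressibility of the middle boundary component and confirming that the hyperbolic structure produced is geometrically finite in the non-finite-volume setting; both reduce to standard facts once the $I$--bundle picture of $M$ is in place.
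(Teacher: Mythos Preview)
Your proof is correct and follows the same overall strategy as the paper: realize $G$ as the fundamental group of a compact orientable pared $3$--manifold satisfying Thurston's hypotheses, then invoke the Hyperbolization Theorem. The difference lies in the construction of the manifold. The paper builds $M$ in one line by drilling: take $S_g \times [-1,1]$ and remove the interior of a closed tubular neighborhood $N$ of $a_0 \times \{0\}$, then declare $(M,\partial N)$ to be the pared manifold. Your construction instead glues the two $I$--bundles $S_g \times I$ and $T^2 \times I$ along an essential annulus. These two descriptions produce the same manifold---your $T^2 \times I$ is precisely the shell $N' \setminus \mathrm{int}(N)$ for a slightly larger tubular neighborhood $N'$, and the remaining piece retracts to $S_g \times I$---so the arguments are equivalent, not merely parallel. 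The advantage of the drilling description is economy: the pared structure $(M,\partial N)$ is manifestly that of a compact atoroidal Haken pared $3$--manifold, so the paper simply cites \cite[Thm.~1.43]{kapovich} without further verification. Your Bass--Serre argument for atoroidality and the checks of irreducibility and boundary-incompressibility are correct but unnecessary once one has the drilled model in hand.
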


\begin{proof}
The group $G$ is the fundamental group of a compact $3$--manifold $M$ formed from the product $S_g \times [-1,1]$ by removing the interior of a closed tubular neighborhood $N$ of the curve $a_0 \times \{0\}$, where $a_0$ represents the homotopy class $a$.
Since $(M,\boundary N)$ is a compact atoroidal Haken pared $3$--manifold, the result follows directly from Thurston's Hyperbolization Theorem (see \cite[Thm.~1.43]{kapovich}).
\end{proof}

\begin{prop}
\label{prop:degree2_cover}
The group $G = \pi_1(S_g)*_{\la a = c \ra} \pi_1(K^2) \in \cG_K$ where $a$ is the homotopy class of a nonseparating simple closed curve on $S_g$ has an index two subgroup that is a geometrically finite Kleinian group.
\end{prop}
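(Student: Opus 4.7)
The plan is to construct an explicit index-two cover of the $2$--complex $X$ from Construction~\ref{const_model_geo} whose fundamental group lies in $\cG_T$, and then invoke Proposition~\ref{prop:Accidental}.

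First I would build a surjection $\epsilon\colon G \to \Z/2\Z$. On the Klein bottle factor I take $\epsilon_K\colon \pi_1(K^2) \to \Z/2\Z$ to be the orientation character, so that $\epsilon_K(c)=1$ since $c$ is orientation-reversing. On the surface factor I use the hypothesis that $a$ is nonseparating: its class is nontrivial in $H_1(S_g;\Z/2\Z)$ (by a dual transverse curve and nondegeneracy of the $\Z/2\Z$ intersection pairing), so there exists a homomorphism $\epsilon_S\colon\pi_1(S_g)\to \Z/2\Z$ with $\epsilon_S(a)=1$. Because $\epsilon_S(a)=\epsilon_K(c)$, these two homomorphisms agree on the amalgamated cyclic subgroup $\la a=c\ra$ and therefore induce a well-defined $\epsilon\colon G\to \Z/2\Z$. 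Let $G'=\ker(\epsilon)$, an index-two subgroup of $G$.

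Next I would identify $G'$ as the fundamental group of the cover $\tilde X \to X$ corresponding to $\epsilon$. The kernels of $\epsilon_S$ and $\epsilon_K$ correspond to a connected double cover $\tilde S\to S_g$ (a closed orientable surface of genus $2g-1\geq 3$) and the orientation double cover of $K^2$, which is the torus $T^2$. Since $\epsilon_S(a)=1$ and $\epsilon_K(c)=1$, the preimage of $a$ in $\tilde S$ is a single simple closed curve $\tilde a$ representing $a^2\in\pi_1(\tilde S)$, and similarly the preimage of $c$ in $T^2$ is a single simple closed curve $\tilde c$ representing the primitive element $c^2\in\pi_1(T^2)$. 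Covering space theory applied to the graph of spaces decomposition of $X$ then produces $\tilde X$ by gluing $\tilde S$ to $T^2$ along $\tilde a=\tilde c$, and $\pi_1(\tilde X)\cong G'$.

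This realizes $G'$ as the amalgam $\pi_1(\tilde S)\ast_{\la a^2=c^2\ra}\pi_1(T^2)$, in which $a^2$ is an essential simple closed curve on the genus $2g-1$ surface $\tilde S$ and $\pi_1(T^2)=\la c^2\ra\times\Z$. Hence $G'$ lies in $\cG_T$, and Proposition~\ref{prop:Accidental} applies to conclude that $G'$ is geometrically finite Kleinian. The only delicate verification is that each preimage is a single embedded essential loop; this follows because both $a$ and $c$ are sent to the nontrivial element of $\Z/2\Z$, so each preimage consists of a single connected double cover of the base curve, and because $a$ and $c$ inject into $\pi_1(S_g)$ and $\pi_1(K^2)$, so their squares remain nontrivial in the respective covers.
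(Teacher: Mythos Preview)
Your argument is correct and follows essentially the same approach as the paper: both construct the double cover of $X$ by combining the orientation double cover $T^2\to K^2$ with a double cover $S_{2g-1}\to S_g$ in which the nonseparating curve $a$ lifts to a single simple closed curve, then apply Proposition~\ref{prop:Accidental}. Your presentation differs only cosmetically in that you first assemble the homomorphism $\epsilon\colon G\to\Z/2\Z$ from the two vertex-group characters and then take its kernel, whereas the paper builds the two covers first and glues them; the content is the same.
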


\begin{proof}
Suppose $G = \pi_1(S_g)*_{\la a = c \ra} \pi_1(K^2) \in \cG_K$, where $a$ is the homotopy class of a nonseparating simple closed curve on $S_g$. Then $G \cong \pi_1(X)$, where $X$ is the union of $S_g$ and $K^2$ glued along essential simple closed curves $a_0$ and $c_0$ on $S_g$ and $K^2$ respectively. The curve $a_0$ is a nonseparating simple closed curve on $S_g$ in the homotopy class of $a \in \pi_1(S_g)$, and $c_0$ is an orientation-reversing simple closed curve on $K^2$ in the homotopy class of $c \in \pi_1(K^2)$. 

To prove the proposition, we show that $X$ has a double cover $\hat{X}$ formed from a closed orientable surface $S_{2g-1}$ and a torus $T^2$ by identifying an essential simple closed curve from each surface.  
Since the loop $c_0$ reverses orientation, its preimage in the canonical orientable double cover $T^2 \to K^2$ is a single simple closed curve $\hat{c}$ double covering $c_0$. Since the simple closed curve $a_0$ does not separate $S_g$, it represents a nontrivial element of $H_1(S_g; \Z/2\Z) = (\Z/2\Z)^{2g}$.
Thus there exists a homomorphism
\[
   \pi_1(S_g) \longrightarrow H_1(S_g;\Z) \longrightarrow H_1(S_g;\Z/2\Z) \longrightarrow \Z /2\Z
\]
mapping $a$ to the non-trivial element of $\Z /2\Z$. In the double cover $S_{2g-1} \rightarrow S_g$, the curve $a_0$ has preimage a single simple closed curve $\hat{a}$ double covering $a_0$. Form a $2$--complex $\hat{X}$ from $T^2$ and $S_{2g-1}$ by identifying $\hat{c}$ with $\hat{a}$. Then $\hat{X}$ double covers $X$.
By Proposition~\ref{prop:Accidental} the group $\pi_1(\hat{X})$ is isomorphic to a geometrically finite Kleinian group.
\end{proof}

\begin{rem}
\label{rem:virt_Kleinian}
If $G = \pi_1(S_g)*_{\la a = c \ra} \pi_1(K^2) \in \cG_K$ where $a$ is the homotopy class of a separating simple closed curve on $S_g$, then $G$ has an index four subgroup that is geometrically finite and Kleinian. The proof above does not apply since if $a$ is separating, it has trivial image in the abelianization $H_1(S_g)$ of $\pi_1(S_g)$, and hence also in the abelian group $\Z/2\Z$.
In particular, $a$ is contained in every index two subgroup of $\pi_1(S_g)$, and any loop representing $a$ has a disconnected preimage in every double cover of $S_g$.
However, $G$ and $G' = \pi_1(S_g)*_{\la a = b \ra} \pi_1(T^2) \in \cG_K$ do have isomorphic index four subgroups, and the group $G'$ is a geometrically finite Kleinian group. We leave the details as an exercise for the reader. 
\end{rem}

\bibliographystyle{alpha}
\bibliography{refs}

\end{document}